\documentclass[11pt]{article}

\usepackage[a4paper]{geometry}
\usepackage{lmodern}        
\usepackage{graphicx,color}
\usepackage{xcolor}
\usepackage{enumitem}       
\usepackage{multirow}       
\usepackage{amsmath,amssymb,amsfonts,amsthm,mathtools,bm}
\usepackage{dsfont}
\usepackage{bbm}
\usepackage{comment}
\usepackage{a4wide}

\usepackage{hyperref}
\usepackage{cleveref}

\usepackage{tikz}
\usepackage{pgfplots}
\usepgfplotslibrary{groupplots}
\usetikzlibrary{arrows.meta,positioning,calc,decorations.pathreplacing}
\pgfplotsset{compat=1.18}

\newtheorem{proposition}{Proposition}[section]
\newtheorem{theorem}[proposition]{Theorem}
\newtheorem{lemma}[proposition]{Lemma}
\newtheorem{corollary}[proposition]{Corollary}

\theoremstyle{definition}
\newtheorem{remark}[proposition]{Remark}

\newtheorem{question}[proposition]{Question}

\theoremstyle{remark}

\newenvironment{proofof}[1]{\smallskip\noindent{\textbf{Proof~of~#1.}}%
  \hspace{1pt}}{\hspace{-5pt}{\nobreak\quad\nobreak\hfill\nobreak%
    $\square$\vspace{2pt}\par}\smallskip\goodbreak}

\numberwithin{equation}{section}
\numberwithin{figure}{section}
\numberwithin{table}{section}

\crefname{theorem}{theorem}{theorems}
\Crefname{theorem}{Theorem}{Theorems}

\crefname{proposition}{proposition}{propositions}
\Crefname{proposition}{Proposition}{Propositions}

\crefname{lemma}{lemma}{lemmas}
\Crefname{lemma}{Lemma}{Lemmas}

\crefname{corollary}{corollary}{corollaries}
\Crefname{corollary}{Corollary}{Corollaries}

\crefname{definition}{definition}{definitions}
\Crefname{definition}{Definition}{Definitions}

\crefname{remark}{remark}{remarks}
\Crefname{remark}{Remark}{Remarks}

\crefname{example}{example}{examples}
\Crefname{example}{Example}{Examples}

\Crefname{question}{Question}{Question}

\Crefname{assumptions}{Assumptions}{assumptions}

\allowdisplaybreaks[4]
\newcommand{\RR}{\mathbb{R}}
\newcommand{\Ss}{\mathbb{S}}
\newcommand{\ve}{\varepsilon}

\begin{document}
\title{Dense orbits for scale-invariant rotationally symmetric solutions of the 2D Euler equations}
\author{Ibrahim Suleiman\footnote{Courant Institute School of Mathematics, Computing and Data Science, New York University, 251 Mercer Street, New York, NY 10012, USA, \texttt{is1647@nyu.edu}}}
\maketitle

 \begin{abstract}
Let $m \ge 4$. We prove that there is a forward orbit of the 2D Euler system for scale-invariant $m$-fold symmetric vorticities, namely 
\begin{equation*}
    \begin{aligned}
                \partial_tg + 2G\partial_\theta g &= 0 \\
                4G + \partial_{\theta\theta}G &= g,
    \end{aligned}
\end{equation*}
which is dense in $S :=\{g \in L_{m,\mathrm{odd}}^\infty(\Ss^1): g\sin(m\theta) \ge 0 \text{ and }\|g\|_{L^\infty} \le 1\}$ equipped with the topology of weak$^*$ convergence on $L^\infty(\Ss^1)$, where $L_{m,\mathrm{odd}}^\infty(\Ss^1)$ denotes the space of odd and $m$-fold symmetric functions in $L^\infty(\Ss^1)$. The system above was first derived by Elgindi and Jeong, who showed well-posedness in the $m$-fold symmetric class. In subsequent work by Elgindi, Murray and Said, it was shown that solutions with regulated vorticity relax to piece-wise constant steady states with finitely many jumps; our construction shows that this regularity assumption cannot be relaxed. The proof starts from a simple linear mechanism: by evolving prescribed step functions backwards from chosen times and superimposing the resulting data, one can arrange for a single linear orbit to approximate a countable dense family. We show that this mechanism persists for the nonlinear Euler coupling. The key stability estimates imply that the error introduced at each stage becomes arbitrarily small when the next approximation time is taken sufficiently far in the future, allowing the construction to be iterated indefinitely.
\end{abstract}

\section{Introduction}
This paper considers the system 
\begin{equation}
    \label{eqn:Euler:S1:transport}
    \begin{aligned}
                \partial_tg + 2G\partial_\theta g &= 0 \\
                g(0, \theta) &= g_0(\theta)
    \end{aligned}
\end{equation}
on $\mathbb{S}^1 = \{ \theta: -\pi \le \theta < \pi \}$, where $G(t,\cdot)$ is the unique solution to the equation
\begin{equation}
    \label{eqn:Euler:S1:BSlaw}
\begin{aligned}
        4G + \partial_{\theta\theta} G &= g\\
    \frac{1}{2\pi}\int_{-\pi}^\pi G(t,\theta)e^{\pm 2i\theta} \ \mathrm{d} \theta &= 0
\end{aligned}
\end{equation}
Let $L_m^p(\Ss^1)$ denote the space of $m$-fold symmetric functions on $\Ss^1$ which belong to $L^p(\Ss^1)$, where $1 \le p \le \infty$. If $g_0 \in L_m^\infty(\Ss^1)$  for some $m \ge 3$, the system \eqref{eqn:Euler:S1:transport}-\eqref{eqn:Euler:S1:BSlaw} admits a global in time solution $g \in L^\infty(\RR; L_m^\infty(\Ss^1)) \cap C(\RR; L_m^1(\Ss^1))$. This is due to Elgindi and Jeong \cite{elgindi2020symmetries}.

Let $L_{m,\mathrm{odd}}^\infty(\Ss^1)$ denote the space of odd and $m$-fold symmetric functions in $L^\infty(\Ss^1)$. Our main theorem is the following.
\begin{theorem}
\label[theorem]{thm:main}
    Let $m \ge 4$. There exists initial data $g_0 \in L^\infty_{m,\mathrm{odd}}(\Ss^1)$ such that the weak$^*$ $\omega_+$-limit set of $g_0$ is
    \begin{equation*}
       \omega_+(g_0) = \{g \in L_{m,\mathrm{odd}}^\infty(\Ss^1): g\sin(m\theta) \ge 0 \text{ and }\|g\|_{L^\infty} \le \|g_0\|_{L^\infty}\}
    \end{equation*}
\end{theorem}

\subsection{Historical remarks}
Global existence of solutions to the 2D Euler equations (written here in vorticity form)
\begin{equation}
    \begin{aligned}
        \partial_t \omega + u\cdot \nabla \omega &= 0 \\
        u &= \nabla^\perp(-\Delta)^{-1}\omega
    \end{aligned}
\end{equation}
is well known in various settings and even under weak regularity assumptions on the initial data. However, comparatively little is known about the qualitative behavior of these solutions as $t \to \infty$, especially away from equilibrium. The breakthrough work of Bedrossian and Masmoudi \cite{bedrossian2015inviscid}, the work of Ionescu and Jia \cite{ionescu2022axi} and their extensions \cite{masmoudi2024nonlinear},\cite{ionescu2023nonlinear} are examples of settings where long-time behavior is described near equilibria. We refer the reader to the review article \cite{drivas2023singularity} and references therein for a discussion of long-time behavior for the 2D Euler equations. The setting of this paper, described below, is non-perturbative.

The system \eqref{eqn:Euler:S1:transport}-\eqref{eqn:Euler:S1:BSlaw} was first introduced by Elgindi and Jeong in \cite{elgindi2020symmetries}, where they developed a well-posedness theory for the 2D Euler equations in the class of $m$-fold rotationally symmetric vorticities when $m \ge 3$. For the distinguished sub-class of scale-invariant and $m$-fold symmetric vorticities, the 2D Euler equations reduce to  \eqref{eqn:Euler:S1:transport}-\eqref{eqn:Euler:S1:BSlaw}. This provided a setting away from equilibrium where detailed qualitative information could be obtained in the limit $t \to \infty$, as was done in \cite{elgindi2022long}. Indeed, it was shown that for $m \ge 4$, solutions of \eqref{eqn:Euler:S1:transport}-\eqref{eqn:Euler:S1:BSlaw} with $g_0 \in \operatorname{Reg}(\Ss^1)$\footnote{The space of regulated functions $\operatorname{Reg}(\Ss^1)$ is by definition the Banach space of bounded functions on $\Ss^1$ which possess left and right limits at every point.} relax to steady states which are piecewise constant with finitely many jumps. This left open the fate of solutions with $g_0 \notin \operatorname{Reg}(\Ss^1)$, which the present paper addresses. Our main result exhibits a solution showing that the hypothesis $g_0 \in \operatorname{Reg}(\Ss^1)$ cannot be removed if one allows arbitrary $L^\infty$ data.  We mention also the very recent result of \cite{cao2026gradient} which treats the case $m=3$ left open in \cite{elgindi2022long}.

\subsection{Idea of the construction}
 Let $g_0$ be odd about $0$ and non-negative on $[0, \frac{\pi}{m}]$ (i.e. $g_0\sin(m\theta) \ge 0$ on $\Ss^1$) and let $g(t, \cdot) = g(t)$ be the corresponding solution of \eqref{eqn:Euler:S1:transport}. First note that by uniqueness, $g(t, \cdot)$ is odd for all $t$, as is $G(t, \cdot)$. Since $G(t, -\pi/m) = G(t, \pi/m)$, we have $G(t, 0) = G(t, \pi/m) = 0$. Thus we may restrict our attention to $[0, \frac{\pi}{m}]$ and extend to $\Ss^1$ by odd symmetry and $m$-fold symmetry afterwards. The reduced system is now

\begin{equation}
    \label{eqn:Euler:reduced:transport}
    \begin{aligned}
                \partial_tg + 2G\partial_\theta g &= 0 \\
                g(0, \theta) &= g_0(\theta)
    \end{aligned}
\end{equation}
on $[0, \frac{\pi}{m}]$, where $G(t,\cdot)$ is the unique solution to the boundary value problem
\begin{equation}
    \label{eqn:Euler:reduced:BSlaw}
\begin{aligned}
        4G + \partial_{\theta\theta} G &= g\\
        G(t, 0)=G(t, \pi/m) &= 0
\end{aligned}
\end{equation}
Since $g$ is transported and $g_0 \ge 0$ on $[0, \frac{\pi}{m}]$, the same is true for $g(t, \cdot)$. 

We can express $G(t, \theta)$ by means of a Green's kernel
\begin{equation}
\label{eqn:G:kernel:expr}
        G(t, \theta) = \int_0^{\frac{\pi}{m}}K_m(\theta, \zeta)g(t,\zeta) \ \mathrm{d}\zeta,
    \end{equation}
where
\begin{equation}
        K_m(\theta, \zeta) = -\frac{1}{2s_m}\begin{cases}
            \sin(2\theta)\sin(2(\frac{\pi}{m}-\zeta)),\quad 0\le \theta \le \zeta \\
             \sin(2\zeta)\sin(2(\frac{\pi}{m}-\theta)),\quad  \zeta \le \theta \le \frac{\pi}{m}
        \end{cases}
\end{equation}
and
\begin{equation}
    s_m := \sin(2\pi/m)
\end{equation}
Notice that $K_m(\theta, \zeta) < 0$ for $(\theta,\zeta) \in (0, \frac{\pi}{m})^2$. Hence 
\begin{equation}
\label{eqn:directionality}
    \text{$G \le 0$ for all $t$}
\end{equation}
In addition, for non-trivial $g_0$, it is simple to check that $\partial_\theta G(t, \theta)$ is monotone increasing for each $t$, that $\partial_\theta G(t, 0) < 0$ while $\partial_\theta G(t, \frac{\pi}{m}) > 0$, and that $G(t,\cdot)$ has a unique minimum in $(0, \frac{\pi}{m})$ at each $t$. This says that the velocity $G$ is expansive on the right side of the domain, and contractive on the left.

The foregoing facts about $G$ will play a central role in our construction. Indeed, the point of departure for the proof of \Cref{thm:main} is the following observation.
Consider the data 
\begin{equation*}
    g_0(\theta) = c\mathbf{1}_{(a, b]}(\theta),
\end{equation*}
where $0 < a < b < \frac{\pi}{m}$ and $0 < c \le 1$. Under the evolution \eqref{eqn:Euler:reduced:transport}-\eqref{eqn:Euler:reduced:BSlaw}, $g(t, \theta)$ remains a characteristic function for all $t$:
\begin{equation*}
    g(t,\theta) = c\mathbf{1}_{(a(t), b(t)]}(\theta)
\end{equation*}
We can therefore reduce the dynamics to a system of ODEs for the endpoints $a(t)$ and $b(t)$. Using $a' = 2G(t,a), b' = 2G(t, b)$ and \eqref{eqn:G:kernel:expr}, we derive
\begin{equation}
\label{eqn:ode:simple}
    \begin{aligned}
        a'(t) &= -\frac{c}{2s_m}\sin(2a(t))\{\cos(2({\textstyle\frac{\pi}{m}}-b(t))) - \cos(2({\textstyle\frac{\pi}{m}}-a(t)))\} \\
        b'(t) &= \frac{c}{2s_m}\sin(2({\textstyle\frac{\pi}{m}}-b(t)))\{\cos(2b(t)) - \cos(2a(t))\}\\
            a(0) &= a, \quad b(0) = b
    \end{aligned}
\end{equation}
Since $a', b' \le 0$, the limits $\lim_{t\to\infty} a(t), \lim_{t\to\infty} b(t)$ exist. It follows from \eqref{eqn:ode:simple} that $\lim_{t\to\infty} a'(t)= \lim_{t\to\infty} b'(t) = 0$ and consequently \begin{equation*}
    c_\infty:=\lim_{t\to\infty} a(t)=\lim_{t\to\infty} b(t)
\end{equation*}
We claim that 
\begin{equation}
    \label{claim:cinf=0}
    c_\infty = 0.
\end{equation}
The claim \eqref{claim:cinf=0} is an immediate consequence of \Cref{lem:segment:quant} below. Its significance is that any segment (represented by $c\mathbf{1}_{(a, b]}(\theta)$) originating in $(0, \frac{\pi}{m})$ travels left-wards and is attracted towards $\theta = 0$ in infinite time. If this segment were initially localized near $\frac{\pi}{m}$, it is initially in the expansive region, and so it expands while traveling into the bulk, and afterwards begins to contract as it ultimately travels to the origin (see Figure~\ref{fig:single-packet-snapshots} below). 

Since countably many linear combinations of segments, i.e. step functions, can be used to approximate any target function (in an appropriate topology), we can envision stacking an enumeration of these step functions right-wards at ever decreasing scales near $\frac{\pi}{m}$  at the initial time, and using the properties of $G$ to unwind and collapse the stack leftwards, one step function at a time. This allows us to reach every neighborhood of any given function in the desired set.
The construction consists in making the foregoing heuristic considerations rigorous. The main issue to get around is non-linearity. Indeed, for a linear model such as
\begin{equation*}
    \partial_t f -\sin(m\theta)\partial_\theta f = 0,
\end{equation*}
it is relatively easy to construct $g_0$ with the desired properties by time reversibility and superposition. Under the non-linear dynamics of \eqref{eqn:Euler:S1:transport}-\eqref{eqn:Euler:S1:BSlaw}, combinations interact and we must prove that these interactions are under control. Fortunately, the system gives sufficient decay of disturbances in the interior of $[0, \frac{\pi}{m}]$ that enables us to close estimates. 

\section{Evolution of step functions}

In this section, we study the evolution of a step function supported in $(0, \frac{\pi}{m})$ and deduce some quantitative statements. Before proceeding, it is instructive to expound the case of a single interval considered in the previous section. We will prove in particular \eqref{claim:cinf=0} below and by so doing exhibit additional quantitative information.   

We begin with the following useful statement.
\begin{lemma}
\label[lemma]{lem:G-monotonicity}
Assume that
\begin{equation*}
    4G  + \partial_{\theta\theta} G = g, \quad G(0) = 0,
\end{equation*}
where $g \ge 0$. Then the function
    \begin{equation*}
    \frac{G}{\sin(2\theta)}
    \end{equation*}
    is monotonically increasing on $(0, \frac{\pi}{m}]$.
\end{lemma}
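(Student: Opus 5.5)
The plan is to use the Wronskian identity for the operator $L = \partial_{\theta\theta} + 4$, with $\sin(2\theta)$ as the solution of $Lu=0$ vanishing at $0$. Set $u(\theta) = \sin(2\theta)$, which is positive on $(0, \frac{\pi}{m}]$ because $m \ge 3$ gives $\frac{2\pi}{m} < \pi$. The quotient rule gives
\begin{equation*}
    \frac{d}{d\theta}\Bigl(\frac{G}{u}\Bigr) = \frac{W(\theta)}{u(\theta)^2}, \qquad W := G' u - G u'.
\end{equation*}
So it suffices to show $W \ge 0$ on $(0, \frac{\pi}{m}]$.

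Differentiating, $W' = G'' u - G u'' = (g - 4G)u + 4Gu = g u$. Moreover $W(0) = G'(0)\sin 0 - G(0)\cdot 2 = 0$ by the boundary condition $G(0)=0$. Hence
\begin{equation*}
    W(\theta) = \int_0^\theta g(\zeta)\sin(2\zeta)\,\mathrm{d}\zeta \ge 0,
\end{equation*}
since $g \ge 0$ and $\sin(2\zeta) \ge 0$ on $[0,\frac{\pi}{m}]$. Therefore $G/\sin(2\theta)$ has nonnegative derivative (a.e.) on $(0,\frac{\pi}{m}]$. Since it is absolutely continuous on each compact subinterval, it is monotonically increasing there.

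The only delicate point is regularity. For $g \in L^\infty$, $G \in W^{2,\infty}$, so $G'$ is Lipschitz. The computation of $W'$ then holds almost everywhere and $W$ is absolutely continuous, which justifies the integral formula. No other obstacle is expected.
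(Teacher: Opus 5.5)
Your proof is correct and is essentially the paper's argument. The paper also applies the quotient rule, notes that the numerator $G'\sin(2\theta)-2G\cos(2\theta)$ vanishes at $\theta=0$, and differentiates it to get $g\sin(2\theta)\ge 0$; your Wronskian framing and the $W^{2,\infty}$ regularity remark only make explicit what the paper leaves implicit.
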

\begin{proof}
    \begin{equation*}
        \begin{aligned}
            \partial_\theta \left(\frac{G}{\sin(2\theta)}\right) &= \frac{\partial_\theta G\sin(2\theta) - 2G\cos(2\theta)}{\sin^2(2\theta)}
        \end{aligned}
    \end{equation*}
   The numerator is equal to $0$ at $\theta = 0$ and its derivative is
    \begin{equation*}
        \begin{aligned}
            (\partial_{\theta\theta} G  + 4 G)\sin(2\theta) = g\sin(2\theta) \ge 0.
        \end{aligned}
    \end{equation*}
    This implies that
    \begin{equation*}
         \partial_\theta \left(\frac{G}{\sin(2\theta)}\right) \ge 0.
    \end{equation*}
\end{proof}

\subsection{Evolution of a single step}

\begin{lemma}
\label[lemma]{lem:segment:quant}
    Assume that $a(t), b(t)$ satisfy \eqref{eqn:ode:simple}. Then
    \begin{equation*}
        \begin{aligned}
           \frac{1}{1+t}\lesssim b(t) &\lesssim \frac{1}{1+t}\\
            \lim_{t\to\infty} \frac{a(t)}{b(t)} &= 0.
        \end{aligned}
    \end{equation*}
More precisely,
\begin{equation*}
    \frac{a(t)}{b(t)} \lesssim (1+t)^{-\gamma}
\end{equation*}
for some $\gamma > 0$.
Here, the implicit constants depend on $m$ and the initial conditions. 
\end{lemma}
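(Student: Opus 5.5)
The plan is to first get a monotone quantity from \Cref{lem:G-monotonicity}, use it to prove \eqref{claim:cinf=0}, and then bootstrap to the quantitative rates.

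\emph{Monotone ratio.} Set $\phi(\theta)=\log\tan\theta$, so that $\phi'(\theta)=2/\sin(2\theta)$. Since $a'=2G(t,a)$ and $b'=2G(t,b)$,
\begin{equation*}
\frac{d}{dt}\log\frac{\tan a(t)}{\tan b(t)} = 4\Big(\frac{G(t,a)}{\sin 2a}-\frac{G(t,b)}{\sin 2b}\Big)\le 0
\end{equation*}
by \Cref{lem:G-monotonicity}, because $g(t)=c\mathbf 1_{(a(t),b(t)]}\ge 0$ and $a<b$. Hence $\tan a(t)/\tan b(t)\le \rho:=\tan a(0)/\tan b(0)<1$ for all $t$. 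Both endpoints decrease, and we already know $a(t),b(t)\to c_\infty$. If $c_\infty>0$, the ratio $\tan a/\tan b$ would tend to $1$, contradicting the bound $\rho<1$. So $c_\infty=0$, which proves \eqref{claim:cinf=0}.

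\emph{Rate for $b$.} Write $\cos 2b-\cos 2a=-2\sin(a+b)\sin(b-a)$ and note that $\sin(2(\frac{\pi}{m}-b))\ge \sin(2(\frac{\pi}{m}-b(0)))>0$. Combined with $a\le C\rho\, b$, \eqref{eqn:ode:simple} then gives $-C_1b^2\le b'\le -C_2 b^2$ for all $t\ge0$. Here $C_2$ depends on $1-\rho$, on $b(0)$ and on $m$. Integrating $(1/b)'\in[C_2,C_1]$ yields $\frac{1}{1+t}\lesssim b(t)\lesssim\frac1{1+t}$, with constants depending on $m$ and the data.

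\emph{Quantitative decay of $a/b$.} I need a bound $\frac{d}{dt}\log\frac{\tan a}{\tan b}\le -\kappa\, b$ with $\kappa>0$. From the proof of \Cref{lem:G-monotonicity},
\begin{equation*}
\frac{G(b)}{\sin 2b}-\frac{G(a)}{\sin 2a}=\int_a^b\frac{N(\theta)}{\sin^2 2\theta}\,d\theta,\qquad N(\theta)=c\int_a^\theta \sin 2\zeta\,d\zeta ,
\end{equation*}
since the numerator $N$ vanishes at $a$: it vanishes at $0$ and $g=0$ on $(0,a)$. For $b$ small, $N(\theta)\approx c(\theta^2-a^2)$ and $\sin^2 2\theta\approx 4\theta^2$. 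So the integral is comparable to $c\int_a^b(1-a^2/\theta^2)\,d\theta= c\,(b-a)^2/b\ge c(1-C\rho)^2 b$. The same leading order $-cb(1-r)^2$, with $r=a/b$, comes from linearizing \eqref{eqn:ode:simple}. For large $t$ this gives $\frac{d}{dt}\log\frac{\tan a}{\tan b}\le-\kappa b\le -\frac{\kappa'}{1+t}$, using the lower bound on $b$. Integrating, $\tan a/\tan b\lesssim(1+t)^{-\gamma}$ with $\gamma=\kappa'$, and since $a,b\to0$ this is equivalent to $a/b\lesssim(1+t)^{-\gamma}$. On any bounded time interval the bound is trivial.

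The main obstacle is making this last lower bound uniform. The integral must be bounded below by a fixed multiple of $b$ for all $t$, not only asymptotically. For that I will use the exact trigonometric expressions, with $\theta\le b(0)<\pi/m$, rather than the small-angle approximations. The key input is that $a/b$ stays bounded away from $1$ by the monotonicity step.
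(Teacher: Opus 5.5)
Your proposal is correct and follows essentially the paper's route: monotonicity of $\tan a/\tan b$ from \Cref{lem:G-monotonicity}, then $\frac{\mathrm{d}}{\mathrm{d}t}\log\frac{\tan a}{\tan b}\lesssim -b$ integrated against $b\gtrsim (1+t)^{-1}$. Your integral $\int_a^b N/\sin^2(2\theta)\,\mathrm{d}\theta$ evaluates exactly to $\frac{c\sin^2(b-a)}{2\sin(2b)}$, which is the paper's identity $\frac{\mathrm{d}}{\mathrm{d}t}\log\frac{\tan a}{\tan b}=-2c\frac{\sin^2(b-a)}{\sin(2b)}$, so the uniformity obstacle you flag disappears; and deriving $b\lesssim(1+t)^{-1}$ directly from $\tan a/\tan b\le\rho$, rather than from $a=o(b)$ as the paper does, is a harmless reordering.

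The one step to tighten is the passage from $\tan a\le\rho\tan b$ to $b-a\gtrsim b$. Since $\tan x/x$ is increasing, the naive conversion gives only $a/b\le\rho\tan(b(0))/b(0)$, which may exceed $1$. Use instead the mean value theorem for $\tan$, which gives $b-a\ge(1-\rho)\cos^2(b(0))\,b$, or the paper's bound $\sin(b-a)\ge\frac{1-\rho}{1+\rho}\sin(b+a)$.
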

\begin{proof}

\begin{enumerate}
        \item Use Cauchy's mean-value theorem to observe that
\begin{equation*}
    \frac{\cos(2b(t))-\cos(2a(t))}{b^2(t)-a^2(t)} = -\frac{\sin(2\xi(t))}{\xi(t)} \simeq -1
\end{equation*}
with $a(t) < \xi < b(t)$. Thus \eqref{eqn:ode:simple} implies that
\begin{equation}
\label{ineq:b:diff}
   -b^2(t) + a^2(t)\lesssim b'(t) \lesssim -b^2(t) + a^2(t)
\end{equation}
In particular, $b'(t) \gtrsim -b^2(t)$ and hence
\begin{equation}
\label{ineq:b:lowerbnd}
    b(t) \gtrsim \frac{1}{1+t}.
\end{equation}
\item In view of \Cref{lem:G-monotonicity}, we have
\begin{equation}
\label{ineq:a:b:comparison}
   \frac{a'(t)}{\sin(2a(t))} =2\frac{G(t, a(t))}{\sin(2a(t))} \le  2\frac{G(t, b(t))}{\sin(2b(t))} =    \frac{b'(t)}{\sin(2b(t))}
\end{equation}
and hence
\begin{equation*}
    \frac{\mathrm{d}}{\mathrm{d}t}\log \frac{\tan(a(t))}{\tan(b(t))} \le 0,
\end{equation*}
which implies that $ \frac{\tan(a(t))}{\tan(b(t))}$ is decreasing and hence
\begin{equation*}
    \frac{1- \frac{\tan(a(t))}{\tan(b(t))}}{1+ \frac{\tan(a(t))}{\tan(b(t))}} = \frac{\sin(b(t)-a(t))}{\sin(b(t) + a(t))}
\end{equation*}
is increasing.
From \eqref{eqn:ode:simple} and trigonometric identities, we compute
\begin{equation*}
    \begin{aligned}
         \frac{\mathrm{d}}{\mathrm{d}t}\log \frac{\tan(a(t))}{\tan(b(t))} = &-\frac{c}{s_m}\{\cos(2({\textstyle\frac{\pi}{m}}-b(t))) - \cos(2({\textstyle\frac{\pi}{m}}-a(t)))\}\\
         &-\frac{c}{s_m}\frac{\sin(2({\textstyle\frac{\pi}{m}}-b(t)))}{\sin(2b(t))}\{\cos(2b(t)) - \cos(2a(t))\} \\
         &= -2c\frac{\sin^2(b(t)-a(t))}{\sin(2b(t))}\\
         &= -2c\frac{\sin^2(b(t) + a(t))}{\sin(2b(t))} \frac{\sin^2(b(t)-a(t))}{\sin^2(b(t) + a(t))}\\
         &\le -2c\frac{\sin^2(b(t) + a(t))}{\sin(2b(t))} \frac{\sin^2(b(0)-a(0))}{\sin^2(b(0) + a(0))} \\
         &\lesssim -b(t) \\
         &\lesssim -\frac{1}{1+t}
    \end{aligned}
\end{equation*}
where the last inequality is by \eqref{ineq:b:lowerbnd}. It follows, since $(1+t)^{-1}$ is not integrable, that
\begin{equation*}
    \lim_{t\to\infty} \frac{\tan(a(t))}{\tan(b(t))} = 0
\end{equation*}
In particular, using $\tan x \simeq x$, we also have
\begin{equation*}
    \lim_{t\to\infty} \frac{a(t)}{b(t)} = 0.
\end{equation*}
In fact, one sees from
\begin{equation*}
    \begin{aligned}
         \frac{\mathrm{d}}{\mathrm{d}t}\log \frac{\tan(a(t))}{\tan(b(t))} 
         &\lesssim -b(t) \\
         &\lesssim -\frac{1}{1+t},
    \end{aligned}
\end{equation*}
 $\tan x \simeq x$ and $b(t) \lesssim (1+t)^{-1}$ that
\begin{equation*}
    a(t) \lesssim (1+t)^{-1-\gamma}
\end{equation*}
for some $\gamma > 0$.
\item 
Returning to \eqref{ineq:b:diff}, we see that
\begin{equation*}
    b'(t) \lesssim -b^2(t) + o(b^2(t))
\end{equation*}
and hence
\begin{equation}
    b(t) \lesssim \frac{1}{1+t}.
\end{equation}

\end{enumerate}
\end{proof}

Note in particular that the associated function $g(t,\theta) = c\mathbf{1}_{(a(t), b(t)]}$ satisfies
\begin{equation*}
\|g(t)\|_{L^1} = c(b(t) - a(t)) \simeq \frac{1}{1+t}
\end{equation*}

\begin{remark}
\label[remark]{rmk:non-endpoint:decay}
    The estimates obtained in \Cref{lem:segment:quant} are representative of the behavior of general step functions as we will now show. See also Figure~\ref{fig:single-packet-snapshots}. Let us draw the reader's attention to the conclusion $a(t) \lesssim b(t)(1+t)^{-\gamma}$, which means that the left endpoint decays faster than the right one. This observation and its guises will play a role in closing certain non-linear estimates below.
\end{remark}  
\subsection{Step functions}
The proof of the following proposition will be similar to that of \Cref{lem:segment:quant}, but more general. 
\begin{proposition}
\label{prop:single:stepfn}
Assume that 
\begin{equation*}
    g_0(\theta) = \sum_{k=1}^n c_k\mathbf{1}_{(a_k, b_k]}(\theta)
\end{equation*}
where  $0 < a_1 < b_1 < a_2 < b_2 < \cdots < a_n < b_n < \frac{\pi}{m}$ and $0 < c_j \le 1$. The unique solution $g(t)$ to \eqref{eqn:Euler:reduced:transport}-\eqref{eqn:Euler:reduced:BSlaw} satisfies the following properties.
\begin{equation*}
    \begin{aligned}
        \frac{1}{1+t} &\lesssim \|g(t)\|_{L^1} \lesssim \frac{1}{1+t} \\
        \sup \operatorname{supp} g(t) &\lesssim \frac{1}{1+t} \\
        \int_0^{\frac{\pi}{m}} \sin(2\theta)g(t,\theta)\  \mathrm{d}\theta &\lesssim \frac{1}{(1+t)^2},
    \end{aligned}
\end{equation*}
where the implicit constants depend on $g_0$ and $m$. If $x(t)$ is the trajectory of any endpoint other than $b_n$, then
\begin{equation*}
    \int_0^\infty x(t)\  \mathrm{d}t < \infty.
\end{equation*}
\end{proposition}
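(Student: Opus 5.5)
Since $g_0$ is a step function and $g$ is transported, $g(t)=\sum_k c_k\mathbf{1}_{(a_k(t),b_k(t)]}$ for all $t$. The endpoints are ordered, $0<a_1(t)<b_1(t)<\dots<b_n(t)<\frac{\pi}{m}$, and satisfy $x'=2G(t,x)\le 0$. The plan is to mimic the proof of \Cref{lem:segment:quant}: treat $b_n(t)$ as the ``master'' endpoint, and show that every other endpoint collapses faster relative to it.

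\textbf{Step 1 (convergence to $0$ and the order of $b_n$).} All endpoints decrease, so they have limits. Since $G(t,\cdot)<0$ on $(0,\frac{\pi}{m})$ whenever $g(t)\neq0$, and $\|g(t)\|_{L^1}$ does not vanish, the limits must be zero points of the limiting velocity. Hence all endpoints converge to a common limit $c_\infty$, and $c_\infty=0$ by the same contradiction as in the single-step case. For small $\theta$ the kernel gives $G(t,\theta)\simeq -\theta\int_\theta^{\pi/m}\zeta\, g\,d\zeta\cdot(\ldots)$ up to factors involving $\frac{\pi}{m}$. More concretely, for $\theta\ge\sup\operatorname{supp} g$,
\begin{equation*}
G(t,\theta)=-\frac{1}{2s_m}\sin(2(\tfrac{\pi}{m}-\theta))\int\sin(2\zeta)g\,d\zeta .
\end{equation*}
Therefore
\begin{equation*}
b_n'\simeq -\int\sin(2\zeta)g(t,\zeta)\,d\zeta\simeq -\sum_k c_k\bigl(b_k^2-a_k^2\bigr).
\end{equation*}
Since every term is at most $b_n^2$, this gives $b_n'\gtrsim -b_n^2$, and hence $b_n\gtrsim (1+t)^{-1}$.

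\textbf{Step 2 (inner endpoints decay faster).} By \Cref{lem:G-monotonicity}, $\tan x(t)/\tan b_n(t)$ is nonincreasing for every endpoint $x$, exactly as in \eqref{ineq:a:b:comparison}. To get a quantitative rate, I would compute, for an endpoint $x<b_n$,
\begin{equation*}
\frac{d}{dt}\log\frac{\tan x}{\tan b_n}=2\Bigl(\frac{G(t,x)}{\sin 2x}-\frac{G(t,b_n)}{\sin 2b_n}\Bigr).
\end{equation*}
The derivative of $G/\sin 2\theta$ has numerator $\int_0^\theta g\sin 2\zeta$. Integrating this from $x$ to $b_n$ picks up at least the mass of the top step $(a_n,b_n]$ when $x\le a_n$, weighted appropriately. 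This should yield $\frac{d}{dt}\log\frac{\tan x}{\tan b_n}\lesssim -b_n(t)\cdot\kappa$, where $\kappa>0$ is controlled by the initial ratio $\sin(b_n-a_n)/\sin(b_n+a_n)$. That ratio is increasing, by the same computation as in \Cref{lem:segment:quant} applied to the pair $a_n,b_n$, now using the structure of $G$ near $b_n$. Combining with Step 1 gives $x(t)\lesssim b_n(t)(1+t)^{-\gamma}$ for all $x\le a_n$. The endpoint $a_n$ itself is handled by this same computation with $x=a_n$.

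\textbf{Step 3 (conclusions).} Step 2 gives $\sum_k c_k(b_k^2-a_k^2)=c_nb_n^2(1+o(1))$. Hence $b_n'\simeq -b_n^2$, which yields $b_n\lesssim(1+t)^{-1}$; this is the $\sup\operatorname{supp}$ bound. We then get $\|g\|_{L^1}\simeq b_n-a_n\simeq(1+t)^{-1}$, and $\int\sin(2\theta)g\lesssim b_n^2\lesssim(1+t)^{-2}$. Finally, for an endpoint $x\neq b_n$ we have $x\le a_n\lesssim(1+t)^{-1-\gamma}$, which is integrable.

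The main obstacle is Step 2: obtaining a uniform lower bound on $\int_x^{b_n}\frac{d}{d\theta}(G/\sin2\theta)$ of order $b_n$ without the closed-form identity $-2c\sin^2(b-a)/\sin 2b$ available in the single-step case. I would first establish this bound for the pair $(a_n,b_n)$ using the explicit kernel restricted to $\theta\ge a_n$, and then transfer it to all lower endpoints via the monotonicity of \Cref{lem:G-monotonicity}.
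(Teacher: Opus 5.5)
Your argument is correct, and the step you flag as the main obstacle is straightforward. Since $g\ge0$ and $g=c_n$ on $(a_n,b_n]$, the numerator $N(\theta)=\int_0^\theta g\sin 2\zeta\,d\zeta$ satisfies $N(\theta)\ge c_n(\sin^2\theta-\sin^2 a_n)$ on $[a_n,b_n]$. Integrating $N/\sin^2(2\theta)$ over $[a_n,b_n]$ gives exactly $\frac{c_n}{2}\sin^2(b_n-a_n)/\sin(2b_n)$. With the correct factor $(\log\tan x)'=4G(t,x)/\sin(2x)$ (not $2G/\sin 2x$), this yields $(\log r)'\le -2c_n\sin^2(b_n-a_n)/\sin(2b_n)\lesssim -b_n$. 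So the single-step closed form reappears as a lower bound, and this is precisely the paper's inequality.

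\textbf{How the paper reaches the same inequality.} It differentiates the explicit endpoint ODEs. It then checks by hand that the cross terms from the lower steps have a favorable sign, using the monotonicity of $\sin(2(\frac{\pi}{m}-\theta))/\sin(2\theta)$. Your derivation from the proof of \Cref{lem:G-monotonicity} avoids that bookkeeping.

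\textbf{Different order of steps.} You prove $b_n\gtrsim(1+t)^{-1}$ first and feed it into the ratio inequality. This gives the rate $a_n/b_n\lesssim(1+t)^{-\gamma}$, from which $b_n\lesssim(1+t)^{-1}$ and $a_n\lesssim(1+t)^{-1-\gamma}$ both follow. That is more than the proposition asks, and it is essentially how the paper later proves \eqref{eq:perturbed-forward-refined}. The paper instead does the following:
\begin{itemize}
\item It gets the upper bound on $b_n$ from the monotonicity of $\sin a_n/\sin b_n$, so $1-\sin^2 a_n/\sin^2 b_n$ stays bounded below for all $t$.
\item It gets integrability by integrating $r'\lesssim -b_n r\simeq -a_n$ in time.
\end{itemize}
Neither step uses the lower bound on $b_n$. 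That is why those conclusions carry over verbatim to the perturbed flow of \Cref{lem:single:stepfn:perturbed:fwd} with an arbitrary continuous $\mu\ge0$. Your route to both conclusions goes through $b_n\gtrsim(1+t)^{-1}$, which in the perturbed setting requires $\mu\in L^1$.

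\textbf{Two small fixes.} First, drop the opening part of your Step 1. The claims that the endpoint limits must be zeros of a limiting velocity, and that $c_\infty=0$ follows by a contradiction as in the single-step case, are not justified. The paper contains no such contradiction argument; there $c_\infty=0$ is a consequence of the decay bound. Nothing later in your argument uses these claims, since convergence to $0$ follows from $b_n\lesssim(1+t)^{-1}$. Second, the heuristic small-$\theta$ formula for $G$ is off (the inner weight is $\sin(2(\frac{\pi}{m}-\zeta))$, not $\zeta$). Only the exact formula at $\theta\ge\sup\operatorname{supp}g$ is needed, and that one is right.
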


\begin{proof}
\begin{enumerate}
\item 
Since $g_0$ is transported, it remains a step function for all time:
\begin{equation*}
   g(t,\theta) = \sum_{k=1}^n c_k\mathbf{1}_{(a_k(t), b_k(t)]}(\theta)
\end{equation*}
where 
\begin{equation*}
 \begin{aligned}
        a_k'(t) = 2G(t, a_k(t)), \quad a_k(0) = a_k \\
         b_k'(t) = 2G(t, b_k(t)), \quad b_k(0) = b_k \\
 \end{aligned}
\end{equation*} 
for $k = 1, \dots, n$.
Moreover, since characteristics do not cross, the topology of $g_0$ is preserved in time:
\begin{equation*}
    0 < a_1(t) < b_1(t) < a_2(t) < b_2(t) < \cdots < a_n(t) < b_n(t) < \frac{\pi}{m}
\end{equation*}
We can compute in detail the system of ODEs defining $a_k(t)$ and $b_k(t)$. We have

\begin{equation}
    \label{b_k:ode:explicit}
        \begin{aligned}
            \quad b_k'(t) &= 2G(t, b_k(t)) \\
            &= 2\int_{a_1(t)}^{b_n(t)} K_m(b_k(t), \zeta) g(t,\zeta) \ \mathrm{d}\zeta \\
            &= 2\sum_{j=1}^n c_j\int_{a_j(t)}^{b_j(t)} K_m(b_k(t), \zeta) \ \mathrm{d}\zeta \\
            &= 2\sum_{j=1}^{k} c_j\int_{a_j(t)}^{b_j(t)} K_m(b_k(t), \zeta) \ \mathrm{d}\zeta + 2\sum_{j=k+1}^n c_j\int_{a_j(t)}^{b_j(t)} K_m(b_k(t), \zeta) \ \mathrm{d}\zeta \\
            &= -\frac{1}{s_m}\sin(2({\textstyle\frac{\pi}{m}}-b_k(t)))\sum_{j=1}^{k} c_j\int_{a_j(t)}^{b_j(t)} \sin(2\zeta) \ \mathrm{d}\zeta \\
            & \ - \frac{1}{s_m}\sin(2b_k(t))\sum_{j=k+1}^{n} c_j\int_{a_j(t)}^{b_j(t)} \sin(2({\textstyle\frac{\pi}{m}}-\zeta)) \ \mathrm{d}\zeta\\
            &= \frac{1}{2s_m}c_k\sin(2({\textstyle\frac{\pi}{m}}-b_k(t)))\{\cos(2b_k(t)) - \cos(2a_k(t))\}\\
            &+ \frac{1}{2s_m}\sin(2({\textstyle\frac{\pi}{m}}-b_k(t)))\sum_{j=1}^{k-1} c_j\{\cos(2b_j(t)) - \cos(2a_j(t))\} \\
            & \ - \frac{1}{2s_m}\sin(2b_k(t))\sum_{j=k+1}^{n} c_j\left\{\cos(2({\textstyle\frac{\pi}{m}}-b_j(t))) - \cos(2({\textstyle\frac{\pi}{m}}-a_j(t)))\right\}\\
        \end{aligned}
    \end{equation}

    and likewise
    \begin{equation*}
        \begin{aligned}
            a_k'(t) &= -\frac{1}{2s_m}c_k\sin(2a_k(t))\left\{\cos(2({\textstyle\frac{\pi}{m}}-b_k(t))) - \cos(2({\textstyle\frac{\pi}{m}}-a_k(t)))\right\}\\
            &+ \frac{1}{2s_m}\sin(2({\textstyle\frac{\pi}{m}}-a_k(t)))\sum_{j=1}^{k-1} c_j\{\cos(2b_j(t)) - \cos(2a_j(t))\} \\
            & \ - \frac{1}{2s_m}\sin\left(2a_k(t)\right)\sum_{j=k+1}^{n} c_j\left\{\cos(2({\textstyle\frac{\pi}{m}}-b_j(t))) - \cos(2({\textstyle\frac{\pi}{m}}-a_j(t)))\right\}\\
        \end{aligned}
    \end{equation*}

\item In view of \Cref{lem:G-monotonicity}, we have
\begin{equation}
   \frac{a_n'(t)}{\sin(2a_n(t))} =2\frac{G(t, a_n(t))}{\sin(2a_n(t))} \le  2\frac{G(t, b_n(t))}{\sin(2b_n(t))} =    \frac{b_n'(t)}{\sin(2b_n(t))}
\end{equation}
and hence
\begin{equation*}
    \frac{\mathrm{d}}{\mathrm{d}t}\log \frac{\tan(a_n(t))}{\tan(b_n(t))} \le 0,
\end{equation*}
Note that we can drop negative terms in \eqref{b_k:ode:explicit} to get
\begin{equation*}
    \begin{aligned}
        b_n'(t) &\le \frac{c_n}{2s_m}\sin(2({\textstyle\frac{\pi}{m}}-b_n(t)))\{\cos(2b_n(t)) - \cos(2a_n(t))\} \\
        &= \frac{c_n}{s_m}\sin(2({\textstyle\frac{\pi}{m}}-b_n(t)))\{\sin^2(a_n(t)) - \sin^2(b_n(t))\} \\
        &= -\frac{c_n}{s_m}\sin(2({\textstyle\frac{\pi}{m}}-b_n(t)))\left\{1- \frac{\sin^2(a_n(t))}{\sin^2(b_n(t))}\right\} \sin^2(b_n(t))
    \end{aligned}
\end{equation*}
Since $ \frac{\mathrm{d}}{\mathrm{d}t}\log \frac{\tan(a_n(t))}{\tan(b_n(t))} \le 0$, $a_n'(t) \le 0$, $b_n'(t) \le 0$, and $a_n(t) \le b_n(t)$, the ratio $\frac{\sin(a_n(t))}{\sin(b_n(t))}$ is decreasing. Indeed, writing $A=(\log\tan a_n)'$ and $B=(\log\tan b_n)'$, we have $A-B\le0$, $B\le0$, and
\[
 (\log(\sin a_n/\sin b_n))'=\cos^2(a_n)(A-B)+(\cos^2(a_n)-\cos^2(b_n))B\le0.
\] So the last term above is bounded by (note $\frac{\pi}{m}-b_n(t) \gtrsim 1$)
\begin{equation*}
    -\frac{c_n}{s_m}\sin(2({\textstyle\frac{\pi}{m}}-b_n(t)))\left\{1- \frac{\sin^2(a_n(0))}{\sin^2(b_n(0))}\right\} \sin^2(b_n(t)) \lesssim -\sin^2(b_n(t)) \lesssim -b_n^2(t)
\end{equation*}
Therefore $b_n'(t) \lesssim -b_n^2(t)$ and we have $b_n(t) \lesssim \frac{1}{1+t}$.
Since $\sup \operatorname{supp} g(t) = b_n(t) \lesssim \frac{1}{1+t}$, the second estimate in the statement of the proposition is proved.
\item We now prove that
\begin{equation*}
    b_n(t) \gtrsim \frac{1}{1+t}
\end{equation*}
From \eqref{b_k:ode:explicit}, and $0 < c_j \le 1$, we have
\begin{equation*}
\begin{aligned}
        b_n'(t) &= \frac{1}{2s_m}\sin(2({\textstyle\frac{\pi}{m}}-b_n(t)))\sum_{k=1}^{n}c_k\{\cos(2b_k(t)) - \cos(2a_k(t))\} \\
        &\ge \frac{1}{2s_m}\sin(2({\textstyle\frac{\pi}{m}}-b_n(t)))\sum_{k=1}^{n}\{\cos(2b_k(t)) - \cos(2a_k(t))\} \\
\end{aligned}
\end{equation*}
Using 
\begin{equation*}
    \frac{\cos(2b_k(t))-\cos(2a_k(t))}{b_k^2(t)-a_k^2(t)}  \simeq -1, \quad \sin(2({\textstyle\frac{\pi}{m}}-b_n(t))) \gtrsim 1,
\end{equation*}
in the preceding estimate, we see that 
\begin{equation*}
    \begin{aligned}
        b_n'(t) 
        &\gtrsim \sum_{k=1}^{n}\{-b_k^2(t) +a_k^2(t)\} \\
        &\ge \sum_{k=1}^{n}-b_k^2(t) \ge -nb_n^2(t) \\
        &\gtrsim -b_n^2(t)
\end{aligned}
\end{equation*}
and consequently
\begin{equation*}
    b_n(t) \gtrsim \frac{1}{1+t}
\end{equation*}
\item 

Next, we prove that 
\begin{equation*}
\int_0^\infty a_n(t) \ \mathrm{d}t < \infty
\end{equation*}
Compute
\begin{equation*}
    \begin{aligned}
         \frac{\mathrm{d}}{\mathrm{d}t}\log \frac{\tan(a_n(t))}{\tan(b_n(t))} = &\frac{1}{s_m}\frac{\sin(2({\textstyle\frac{\pi}{m}}-a_n(t)))}{\sin(2a_n(t))}\sum_{k=1}^{n-1}c_k\{\cos(2b_k(t)) - \cos(2a_k(t))\}\\
         &-\frac{1}{s_m}\frac{\sin(2({\textstyle\frac{\pi}{m}}-b_n(t)))}{\sin(2b_n(t))}\sum_{k=1}^{n-1}c_k\{\cos(2b_k(t)) - \cos(2a_k(t))\} \\
         &-\frac{c_n}{s_m}\{\cos(2({\textstyle\frac{\pi}{m}}-b_n(t))) - \cos(2({\textstyle\frac{\pi}{m}}-a_n(t)))\}\\
         &-\frac{c_n}{s_m}\frac{\sin(2({\textstyle\frac{\pi}{m}}-b_n(t)))}{\sin(2b_n(t))}\{\cos(2b_n(t)) - \cos(2a_n(t))\} \\
         &\le -\frac{1}{s_m}\frac{\sin(2({\textstyle\frac{\pi}{m}}-b_n(t)))}{\sin(2b_n(t))}\sum_{k=1}^{n-1}c_k\{\cos(2b_k(t)) - \cos(2a_k(t))\} \\
         &-\frac{c_n}{s_m}\{\cos(2({\textstyle\frac{\pi}{m}}-b_n(t))) - \cos(2({\textstyle\frac{\pi}{m}}-a_n(t)))\}\\
         &-\frac{c_n}{s_m}\frac{\sin(2({\textstyle\frac{\pi}{m}}-b_n(t)))}{\sin(2b_n(t))}\{\cos(2b_n(t)) - \cos(2a_n(t))\} \\
    \end{aligned}
\end{equation*}
The last two terms above are handled as in \Cref{lem:segment:quant}; their sum is bounded by
\begin{equation*}
    -2c_n\frac{\sin^2(b_n(t) + a_n(t))}{\sin(2b_n(t))} \frac{\sin^2(b_n(0)-a_n(0))}{\sin^2(b_n(0) + a_n(0))} \lesssim -b_n(t)
\end{equation*}
The first two terms can be written as
\begin{equation*}
    \begin{aligned}
        \frac{1}{s_m}\left(\frac{\sin(2({\textstyle\frac{\pi}{m}}-a_n(t)))}{\sin(2a_n(t))}-\frac{\sin(2({\textstyle\frac{\pi}{m}}-b_n(t)))}{\sin(2b_n(t))}\right)\sum_{k=1}^{n-1}c_k\{\cos(2b_k(t)) - \cos(2a_k(t))\}
    \end{aligned}
\end{equation*}
Since $a_k < b_k$, the second factor is $ < 0$. Since 
\begin{equation*}
    \frac{\sin(2({\textstyle\frac{\pi}{m}}-\theta))}{\sin(2\theta)}
\end{equation*}
is decreasing on $(0, \pi/m)$, the first factor is $ > 0$. Inserting this above, we have
\begin{equation*}
      \frac{\mathrm{d}}{\mathrm{d}t}\log \frac{\tan(a_n(t))}{\tan(b_n(t))} \lesssim -b_n(t)
\end{equation*}
and so
\begin{equation*}
\frac{\mathrm{d}}{\mathrm{d}t} \frac{\tan(a_n(t))}{\tan(b_n(t))} \lesssim -b_n(t)\frac{\tan(a_n(t))}{\tan(b_n(t))} \lesssim -a_n(t)
\end{equation*}
since $\tan(x) \simeq x$ for $x \in [0, \pi/m]$.
Therefore
\begin{equation*}
    a_n(t) \lesssim - \frac{\mathrm{d}}{\mathrm{d}t} \frac{\tan(a_n(t))}{\tan(b_n(t))}
\end{equation*}
and integrating,
\begin{equation*}
    \int_0^t a_n(s) \ \mathrm{d}s \lesssim \frac{\tan(a_n(0))}{\tan(b_n(0))}- \frac{\tan(a_n(t))}{\tan(b_n(t))} < \frac{\tan(a_n(0))}{\tan(b_n(0))} < 1.
\end{equation*}
Hence
\begin{equation*}
\int_0^\infty a_n(t) \ \mathrm{d}t < \infty
\end{equation*}
It is immediate that all endpoints to the left of the right-most one go to zero at an integrable rate. 
\item Note that
\begin{equation*}
\begin{aligned}
        \|g(t)\|_{L^1} &= \sum_{k=1}^n c_k (b_k(t) - a_k(t))\\&= b_n(t)\left(c_n-c_n\frac{a_n(t)}{b_n(t)}+\sum_{k=1}^{n-1} c_k\frac{(b_k(t) - a_k(t))}{b_n(t)}\right)
\end{aligned}
\end{equation*}

 since
\begin{equation*}
    \left|\sum_{k=1}^{n-1} c_k\frac{(b_k(t) - a_k(t))}{b_n(t)}\right| \le \sum_{k=1}^{n-1} c_k\frac{a_n(t) }{b_n(t)} \lesssim 1, 
\end{equation*}
the term in parenthesis is $\lesssim 1$, the implicit constant depending on $n$ and the initial $a_k, b_k$. Thus
\begin{equation*}
    \|g(t)\|_{L^1} \lesssim b_n(t) \lesssim \frac{1}{1+t}
\end{equation*}
On the other hand, 
\begin{equation*}
\begin{aligned}
        \|g(t)\|_{L^1} &= \sum_{k=1}^n c_k (b_k(t) - a_k(t))\\
        &\ge c_n(b_n(t) - a_n(t))\\
        &= c_n\frac{\sin(b_n(t)) - \sin(a_n(t))}{\cos(\xi_n(t))} \\
        &\ge c_n \sin(b_n(t))\left(1-\frac{\sin(a_n(t))}{\sin(b_n(t))}\right) \\
        &\ge c_n\sin(b_n(t))\left(1-\frac{\sin(a_n(0))}{\sin(b_n(0))}\right)\\
        &\gtrsim \sin(b_n(t)) \gtrsim b_n(t) \gtrsim \frac{1}{1+t}
\end{aligned}
\end{equation*}
Lastly, 
\begin{equation*}
\begin{aligned}
        \int_0^{\frac{\pi}{m}}\sin(2\theta)g(t,\theta)  \ \mathrm{d}\theta &=  \int_{0}^{b_n(t)}\sin(2\theta)g(t,\theta)  \ \mathrm{d}\theta \\
        &\le \sin(2b_n(t))\|g(t)\|_{L^1} \\
        &\le 2b_n(t)\|g(t)\|_{L^1}  \\
        &\lesssim \frac{1}{(1+t)^2}
\end{aligned}
\end{equation*}

\end{enumerate}
\end{proof}

\begin{figure}[t]
\centering
\begin{tikzpicture}
\begin{groupplot}[
  group style={group size=2 by 2, horizontal sep=1.6cm, vertical sep=2.0cm},
  width=0.42\textwidth,
  height=0.23\textwidth,
  xmin=0, xmax=0.785398,
  ymin=0, ymax=1.12,
  xtick={0,0.392699,0.785398},
  xticklabels={$0$,$\pi/8$,$\pi/4$},
  ytick={0,0.35,0.7,1},
  xlabel={$\theta$}, ylabel={$g(t,\theta)$},
  axis lines=left,
  tick label style={font=\small},
  label style={font=\small},
  title style={font=\small},
]
\nextgroupplot[title={$t=0$}]
\addplot[draw=none, fill=blue!14] coordinates {(0.460000,0) (0.460000,0.350000) (0.540000,0.350000) (0.540000,0)} -- cycle;
\addplot[draw=none, fill=blue!14] coordinates {(0.580000,0) (0.580000,0.700000) (0.660000,0.700000) (0.660000,0)} -- cycle;
\addplot[draw=none, fill=blue!14] coordinates {(0.690000,0) (0.690000,1.000000) (0.750000,1.000000) (0.750000,0)} -- cycle;
\addplot[blue, very thick] coordinates {(0.000000,0.000000) (0.460000,0.000000) (0.460000,0.350000) (0.540000,0.350000) (0.540000,0.000000) (0.580000,0.000000) (0.580000,0.000000) (0.580000,0.700000) (0.660000,0.700000) (0.660000,0.000000) (0.690000,0.000000) (0.690000,0.000000) (0.690000,1.000000) (0.750000,1.000000) (0.750000,0.000000) (0.785398,0.000000)};
\nextgroupplot[title={$t=4$}]
\addplot[draw=none, fill=blue!14] coordinates {(0.270536,0) (0.270536,0.350000) (0.329516,0.350000) (0.329516,0)} -- cycle;
\addplot[draw=none, fill=blue!14] coordinates {(0.363689,0) (0.363689,0.700000) (0.456797,0.700000) (0.456797,0)} -- cycle;
\addplot[draw=none, fill=blue!14] coordinates {(0.504316,0) (0.504316,1.000000) (0.651808,1.000000) (0.651808,0)} -- cycle;
\addplot[blue, very thick] coordinates {(0.000000,0.000000) (0.270536,0.000000) (0.270536,0.350000) (0.329516,0.350000) (0.329516,0.000000) (0.363689,0.000000) (0.363689,0.000000) (0.363689,0.700000) (0.456797,0.700000) (0.456797,0.000000) (0.504316,0.000000) (0.504316,0.000000) (0.504316,1.000000) (0.651808,1.000000) (0.651808,0.000000) (0.785398,0.000000)};
\nextgroupplot[title={$t=12$}]
\addplot[draw=none, fill=blue!14] coordinates {(0.019644,0) (0.019644,0.350000) (0.024523,0.350000) (0.024523,0)} -- cycle;
\addplot[draw=none, fill=blue!14] coordinates {(0.027632,0) (0.027632,0.700000) (0.039103,0.700000) (0.039103,0)} -- cycle;
\addplot[draw=none, fill=blue!14] coordinates {(0.047189,0) (0.047189,1.000000) (0.164021,1.000000) (0.164021,0)} -- cycle;
\addplot[blue, very thick] coordinates {(0.000000,0.000000) (0.019644,0.000000) (0.019644,0.350000) (0.024523,0.350000) (0.024523,0.000000) (0.027632,0.000000) (0.027632,0.000000) (0.027632,0.700000) (0.039103,0.700000) (0.039103,0.000000) (0.047189,0.000000) (0.047189,0.000000) (0.047189,1.000000) (0.164021,1.000000) (0.164021,0.000000) (0.785398,0.000000)};
\nextgroupplot[title={$t=24$}]
\addplot[draw=none, fill=blue!14] coordinates {(0.003063,0) (0.003063,0.350000) (0.003830,0.350000) (0.003830,0)} -- cycle;
\addplot[draw=none, fill=blue!14] coordinates {(0.004323,0) (0.004323,0.700000) (0.006215,0.700000) (0.006215,0)} -- cycle;
\addplot[draw=none, fill=blue!14] coordinates {(0.007604,0) (0.007604,1.000000) (0.057123,1.000000) (0.057123,0)} -- cycle;
\addplot[blue, very thick] coordinates {(0.000000,0.000000) (0.003063,0.000000) (0.003063,0.350000) (0.003830,0.350000) (0.003830,0.000000) (0.004323,0.000000) (0.004323,0.000000) (0.004323,0.700000) (0.006215,0.700000) (0.006215,0.000000) (0.007604,0.000000) (0.007604,0.000000) (0.007604,1.000000) (0.057123,1.000000) (0.057123,0.000000) (0.785398,0.000000)};
\end{groupplot}
\end{tikzpicture}
\caption{Evolution of a step function for \eqref{eqn:Euler:S1:transport}-\eqref{eqn:Euler:S1:BSlaw} with $m=4$.}
\label{fig:single-packet-snapshots}
\end{figure}

We record in the following some corollaries for use in the sequel. Consider the easy problem of designing an initial configuration 
\begin{equation*}
    g_0(\theta) = \sum_{k=1}^n c_k\mathbf{1}_{(a_{k}^0, b_k^0]}(\theta)
\end{equation*}
to achieve a desired target step function $h(\theta)$ at time $T$:
\begin{equation*}
    g(T, \theta) = h(\theta) := \sum_{k=1}^n c_k\mathbf{1}_{(a_k, b_k]}(\theta)
\end{equation*}
This is a simple matter of solving our system backwards in time beginning at time $T$. Make the change of variables $\zeta =\frac{\pi}{m}-\theta$ and $\tau = T - t$, $g(t,\theta) = f(\tau, \zeta)$, $G(t, \theta) = F(\tau, \zeta)$. Then we have
\begin{equation}
\label{eqn:sys:reflected}
\begin{aligned}
        \partial_\tau f + 2F\partial_\zeta f &= 0 \\
        \partial_{\zeta\zeta}F + 4F &= f \\
        f(0, \zeta) &= h({\textstyle\frac{\pi}{m}}-\zeta)
\end{aligned}
\end{equation}
The reflected terminal datum is again an ordered step function with the same coefficients; after relabeling the reflected endpoints, \Cref{prop:single:stepfn} applies. Undoing the change of variables gives the following corollary. 

\begin{corollary}
Let
\begin{equation*}
    h(\theta) = \sum_{k=1}^n c_k\mathbf{1}_{(a_k, b_k]}(\theta)
\end{equation*}
where  $0 < a_1 < b_1 < a_2 < b_2 < \cdots < a_n < b_n < \frac{\pi}{m}$ and $0 < c_j \le 1$. Let $g(t)$ be the unique solution to \eqref{eqn:Euler:reduced:transport}-\eqref{eqn:Euler:reduced:BSlaw} which satisfies $g(T) = h$. Then for $t \le T$, the following estimates hold.  
\begin{equation*}
    \begin{aligned}
        \frac{1}{1+T-t} &\lesssim \|g(t)\|_{L^1} \lesssim \frac{1}{1+T-t} \\
        \frac{\pi}{m}-\inf \operatorname{supp} g(t) &\lesssim \frac{1}{1+T-t} \\
        \int_0^{\frac{\pi}{m}} \sin(2({\textstyle\frac{\pi}{m}}-\theta))g(t,\theta)\  \mathrm{d}\theta &\lesssim \frac{1}{(1+T-t)^2},
\end{aligned}
\end{equation*} 
with the implied constants independent of $T$.
\end{corollary}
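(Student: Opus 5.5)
The plan is to reduce the statement to \Cref{prop:single:stepfn} through the reflection $\zeta = \frac{\pi}{m}-\theta$, $\tau = T-t$ described just before the corollary, and then to transfer the three estimates back.

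\textbf{Step 1: the reflected system.} Set $f(\tau,\zeta) = g(T-\tau, \frac{\pi}{m}-\zeta)$ and $F(\tau,\zeta) = G(T-\tau,\frac{\pi}{m}-\zeta)$. Then $\partial_\tau f = -\partial_t g$ and $\partial_\zeta f = -\partial_\theta g$. The transport equation becomes $\partial_\tau f + 2F\partial_\zeta f = 0$, since both sign flips cancel. The elliptic equation $4F+\partial_{\zeta\zeta}F = f$ is invariant under the reflection. The boundary conditions $F(\tau,0)=F(\tau,\frac{\pi}{m})=0$ are preserved.

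So $f$ solves \eqref{eqn:Euler:reduced:transport}-\eqref{eqn:Euler:reduced:BSlaw} for $\tau \ge 0$, with initial datum $f(0,\zeta) = h(\frac{\pi}{m}-\zeta)$. We also use uniqueness of the forward problem, via Elgindi--Jeong. It identifies the backward solution $g$ on $t\le T$ with the reflected forward solution $f$ on $\tau\ge0$.

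\textbf{Step 2: relabel the datum.} We have $h(\frac{\pi}{m}-\zeta) = \sum_k c_k \mathbf{1}_{[\frac{\pi}{m}-b_k, \frac{\pi}{m}-a_k)}(\zeta)$. Set $\tilde a_j = \frac{\pi}{m}-b_{n+1-j}$, $\tilde b_j = \frac{\pi}{m}-a_{n+1-j}$ and $\tilde c_j = c_{n+1-j}$. This gives an ordered configuration $0<\tilde a_1<\tilde b_1<\dots<\tilde b_n<\frac{\pi}{m}$ with $0<\tilde c_j\le1$.

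The intervals are half-open on the other side. This changes $f$ only on a finite set, hence not as an element of $L^\infty$. Since the solution is determined by its $L^\infty$ class, \Cref{prop:single:stepfn} applies to $f$.

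\textbf{Step 3: transfer the estimates and check $T$-independence.} The three estimates of \Cref{prop:single:stepfn} hold for $f(\tau)$ with constants depending only on $m$ and $(\tilde a_j,\tilde b_j,\tilde c_j)$, hence only on $h$ and $m$, and not on $T$. This independence is the point that needs care. It holds because $T$ enters only through the time shift $\tau=T-t$, while the datum of the forward problem for $f$ is $h$ reflected, which is fixed.

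Transferring each estimate:
\begin{itemize}
\item $\|f(\tau)\|_{L^1}=\|g(t)\|_{L^1}$, since reflection preserves the $L^1$ norm. This gives the two-sided bound in $1+T-t$.
\item $\sup\operatorname{supp} f(\tau) = \frac{\pi}{m}-\inf\operatorname{supp} g(t)$. This gives the second estimate.
\item $\int_0^{\pi/m}\sin(2\zeta) f(\tau,\zeta)\,\mathrm d\zeta = \int_0^{\pi/m}\sin(2(\frac{\pi}{m}-\theta)) g(t,\theta)\,\mathrm d\theta$, by the substitution $\theta = \frac{\pi}{m}-\zeta$. This gives the third estimate.
\end{itemize}
Throughout, $\tau = T-t\ge 0$ exactly when $t\le T$.

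I expect no real obstacle. The only points needing care are the sign bookkeeping in Step 1 and the observation that the constants in \Cref{prop:single:stepfn} depend on the datum and not on the elapsed time.
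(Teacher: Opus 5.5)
Your proposal is correct and follows the paper's own argument: reflect via $\zeta=\frac{\pi}{m}-\theta$, $\tau=T-t$, note that the reflected terminal datum is again an ordered step function (coefficients reversed), and apply \Cref{prop:single:stepfn}, whose constants depend only on that fixed datum and $m$, hence not on $T$. Your extra bookkeeping fills in details the paper leaves implicit: the cancellation of the two sign flips in the transport equation, the half-open interval convention being immaterial in $L^\infty$, and the identification of the three transferred quantities.
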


\begin{corollary}
\label[corollary]{lem:single:stepfn:perturbed:fwd}
Let $g(t,\theta)$ satisfy
    \begin{equation*}
    \begin{aligned}
        \partial_t g + (2G - \mu(t)\sin(2\theta)) \partial_\theta g &= 0 \\
                G(t, \theta)&= \int_0^{\frac{\pi}{m}}K_m(\theta, \zeta)g(t,\zeta) \ \mathrm{d}\zeta \\
                g(0, \theta) &= g_0(\theta),
 \end{aligned}
\end{equation*}
where $g_0(\theta)$ is as defined in \Cref{prop:single:stepfn} above and $\mu \in C^0([0,\infty))$ with $\mu(t) \ge 0$. Write
\[
 g(t,\theta)=\sum_{k=1}^n c_k\mathbf 1_{(a_k(t),b_k(t)]}(\theta),
 \qquad 0<a_1(t)<b_1(t)<\cdots<a_n(t)<b_n(t)<\frac{\pi}{m}.
\]
Then, for $t\ge 0$, we have
\begin{equation*}
    \begin{aligned}
       \|g(t)\|_{L^1} \lesssim \frac{1}{1+t} \\
        \sup \operatorname{supp} g(t) &\lesssim \frac{1}{1+t} \\
        \int_0^{\frac{\pi}{m}} \sin(2\theta)g(t,\theta)\  \mathrm{d}\theta &\lesssim \frac{1}{(1+t)^2}.
\end{aligned}
\end{equation*} 
If $x(t)$ is the trajectory of any endpoint other than the right-most one $b_n(t)=\sup \operatorname{supp}g(t)$, then
\begin{equation*}
    \int_0^\infty x(t)\  \mathrm{d}t < \infty.
\end{equation*}
If, in addition, $\mu\in L^1(0,\infty)$, then there is $\gamma>0$ such that
\begin{equation}
\label{eq:perturbed-forward-refined}
 b_n(t)\simeq \frac1{1+t},\qquad
 \frac{a_n(t)}{b_n(t)}\lesssim (1+t)^{-\gamma},
 \qquad a_n(t)\lesssim (1+t)^{-1-\gamma}.
\end{equation}
Moreover, after decreasing $\gamma$ if necessary,
\begin{equation}
\label{eq:perturbed-forward-sharp}
 c_n b_n(t)\le \frac1{1+t}+\frac{C}{(1+t)^{1+\gamma}}.
\end{equation}
The constants in the last two estimates depend only on $g_0,m$, and an upper bound for $\|\mu\|_{L^1}$.
\end{corollary}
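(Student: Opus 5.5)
The plan is to rerun the proof of \Cref{prop:single:stepfn} with the extra drift $-\mu(t)\sin(2\theta)$ included, and to check at each step that it either has a favorable sign or cancels. Each endpoint $x\in\{a_k,b_k\}$ now satisfies $x'=2G(t,x)-\mu(t)\sin(2x)$. The data are still transported, so the ordering of the endpoints and the step-function structure persist. Since $\mu\ge0$ and $\sin(2x)>0$, the extra term only adds a negative contribution to every velocity. The key observation is that it contributes exactly $-\mu(t)$ to $x'/\sin(2x)$, i.e.\ to $(\log\tan x)'$ up to the factor $\cos^2$ identities. Consequently
\[
\frac{\mathrm d}{\mathrm dt}\log\frac{\tan a_n}{\tan b_n}
\]
is unchanged: the two $-2\mu$ contributions cancel exactly. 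The monotonicity from \Cref{lem:G-monotonicity} and the quantitative bound $\lesssim -b_n(t)$ from step 4 of \Cref{prop:single:stepfn} therefore carry over verbatim. For $\sin a_n/\sin b_n$ we use the same decomposition $\cos^2(a_n)(A-B)+(\cos^2a_n-\cos^2b_n)B$. Here $B\le0$ still holds, and in fact becomes more negative, so this ratio remains decreasing.

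For the upper bounds, dropping negative terms in $b_n'$ now also drops $-\mu\sin(2b_n)$, which gives $b_n'\lesssim -b_n^2$ exactly as before, hence $b_n\lesssim(1+t)^{-1}$. The estimate $\|g\|_{L^1}\lesssim b_n$ in step 5 is purely geometric: it uses $b_k-a_k\le a_n$ for $k<n$. The bound $\int\sin(2\theta)g\le 2b_n\|g\|_{L^1}$ follows likewise. The integrability $\int_0^\infty a_n<\infty$ comes from $\frac{\mathrm d}{\mathrm dt}\frac{\tan a_n}{\tan b_n}\lesssim -b_n\frac{\tan a_n}{\tan b_n}\simeq -a_n$. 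This only needs the ratio ODE, which is $\mu$-free, together with the lower-bound-free inequality, so it too survives. All other non-rightmost endpoints are dominated by $a_n$. Thus the first group of claims needs no assumption on $\mu$ beyond $\mu\ge0$; the lower bound on $b_n$ is simply not claimed there.

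For the refined estimates with $\mu\in L^1$, we need the lower bound $b_n\gtrsim(1+t)^{-1}$. From step 3 we now have $b_n'\ge -Cb_n^2-2\mu b_n$. Setting $u=1/b_n$ gives $u'\le C+2\mu u$, and Grönwall yields $u(t)\le e^{2\|\mu\|_{L^1}}(u(0)+Ct)$. This gives the lower bound with constants depending on $\|\mu\|_{L^1}$. Feeding $b_n\gtrsim(1+t)^{-1}$ into $(\log\frac{\tan a_n}{\tan b_n})'\lesssim -b_n\lesssim -(1+t)^{-1}$ gives $a_n/b_n\lesssim(1+t)^{-\gamma}$, and then $a_n\lesssim(1+t)^{-1-\gamma}$.

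The main obstacle is the sharp bound \eqref{eq:perturbed-forward-sharp}, which requires identifying the leading constant. Using the explicit formula \eqref{b_k:ode:explicit} with $k=n$, together with $\cos(2b)-\cos(2a)=-2(\sin^2b-\sin^2a)$, we get
\[
b_n'\le -\tfrac{c_n}{s_m}\sin(2(\tfrac{\pi}{m}-b_n))\,(\sin^2b_n-\sin^2a_n)+\tfrac{1}{s_m}\sin(2(\tfrac{\pi}{m}-b_n))\sum_{k<n}(\sin^2b_k-\sin^2a_k)\cdot(-1)\cdots
\]
In this expression the $k<n$ terms and the $\mu$ term are nonpositive, so they can be discarded. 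Next we Taylor expand: $\sin(2(\frac{\pi}{m}-b_n))=s_m+O(b_n)$, $\sin^2b_n=b_n^2(1+O(b_n^2))$, and $\sin^2a_n\lesssim b_n^2(1+t)^{-2\gamma}$. Together these give $b_n'\le -c_nb_n^2+Cb_n^2(1+t)^{-\gamma'}$ with $\gamma'=\min(1,2\gamma)$. Setting $v=1/(c_nb_n)$, this becomes $v'\ge 1-C(1+t)^{-\gamma'}$, so $v\ge t+v(0)-C(1+t)^{1-\gamma'}$ (or a logarithm if $\gamma'=1$). Inverting gives $c_nb_n\le (1+t)^{-1}+C(1+t)^{-1-\gamma}$ after shrinking $\gamma$. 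The delicate point is that $v(0)$ could be smaller than $1$; the resulting discrepancy is $O((1+t)^{-2})$, which is absorbed into the error term.
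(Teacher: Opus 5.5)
Your proposal is correct and follows essentially the same route as the paper: the exact cancellation of the $-2\mu$ contributions in $(\log(\tan a_n/\tan b_n))'$, discarding the nonpositive drift to obtain $b_n\lesssim(1+t)^{-1}$ and $\int_0^\infty a_n\,\mathrm{d}t<\infty$, Gr\"{o}nwall applied to $1/b_n$ for the lower bound when $\mu\in L^1$, and integrating $(1/(c_nb_n))'\ge 1-C(1+t)^{-\gamma_0}$ for the sharp constant. The ``delicate point'' you flag does not actually arise: $1/(c_nb_n(0))>1$ holds automatically because $c_n\le1$ and $b_n(0)<\pi/m<1$.
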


\begin{proof}
Each endpoint $z(t)$ satisfies
\begin{equation}
\label{eq:perturbed-endpoint}
 z'(t)=2G(t,z(t))-\mu(t)\sin(2z(t)).
\end{equation}
The additional term is non-positive.  The proof of the upper bound for $b_n$ in
\Cref{prop:single:stepfn} therefore applies without change and gives
$b_n(t)\lesssim(1+t)^{-1}$.  In particular, the asserted bounds for $\|g(t)\|_{L^1}$ and 
\[ \int_0^\frac{\pi}{m} \sin(2\theta) g(t,\theta)  \ \mathrm{d}\theta\]
follow exactly as in that proposition.

For completeness, put $r(t)=\tan a_n(t)/\tan b_n(t)$.  Since
\[
 \frac{\mathrm d}{\mathrm dt}\log\tan z(t)=\frac{2z'(t)}{\sin(2z(t))},
\]
the perturbation in \eqref{eq:perturbed-endpoint} contributes the same term
$-2\mu(t)$ at $a_n$ and at $b_n$, and hence cancels from $(\log r)'$.  The
calculation in the proof of \Cref{prop:single:stepfn} consequently gives
\begin{equation}
\label{eq:perturbed-ratio-differential}
 (\log r)'(t)\le-\kappa b_n(t),
 \qquad r'(t)\le-\kappa_1 a_n(t),
\end{equation}
for constants $\kappa,\kappa_1>0$ depending only on the initial step function
and on $m$.  The second inequality yields $\int_0^\infty a_n(t)\,\mathrm dt<\infty$;
all the remaining endpoints lie to the left of $a_n$, so they are integrable as
well.

Assume now that $\mu\in L^1$.  The explicit endpoint equation and the ordering
give
\[
 b_n'(t)\ge-Cb_n(t)^2-C\mu(t)b_n(t).
\]
Thus $z=1/b_n$ satisfies $z'\le C+C\mu z$, and Gr\"{o}nwall's inequality gives
$b_n(t)\gtrsim(1+t)^{-1}$, with a constant depending only on the quantities
specified in the statement.  Integrating the first inequality in
\eqref{eq:perturbed-ratio-differential} now gives
$r(t)\lesssim(1+t)^{-\gamma}$ for some $\gamma>0$.  Since $\tan z\simeq z$ on
$[0,\pi/m]$, this proves \eqref{eq:perturbed-forward-refined}.

It remains to prove \eqref{eq:perturbed-forward-sharp}. Dropping the non-positive terms in the $b_n$ ODE yields
\[
 b_n'\le-\frac{c_n}{s_m}\sin\!\left(2\Big(\frac\pi m-b_n\Big)\right)
 \left(1-\frac{\sin^2a_n}{\sin^2b_n}\right)\sin^2b_n.
\]
The estimates already proved imply
\[
 b_n(t)\lesssim(1+t)^{-1},\qquad
 \frac{\sin a_n(t)}{\sin b_n(t)}\lesssim(1+t)^{-\gamma}.
\]
Consequently,
\[
 \frac{\sin(2({\textstyle\frac{\pi}{m}}-b_n))}{s_m}
     =1+O((1+t)^{-1}),\qquad
 \frac{\sin^2b_n}{b_n^2}=1+O((1+t)^{-2}),
\]
and
\[
 1-\frac{\sin^2a_n}{\sin^2b_n}
     =1+O((1+t)^{-2\gamma}).
\]
Thus, with $\gamma_0=\min\{1,2\gamma\}>0$,
\begin{equation*}
 b_n'(t)\le-c_n\bigl(1-C(1+t)^{-\gamma_0}\bigr)b_n(t)^2.
\end{equation*}
Equivalently,
\[
 \frac{\mathrm d}{\mathrm dt}\frac1{b_n(t)}
 \ge c_n\bigl(1-C(1+t)^{-\gamma_0}\bigr).
\]
After decreasing $\gamma_0$ slightly if necessary, integration from a fixed
time $t_0$ to $t$ gives
\[
 \frac1{b_n(t)}\ge c_nt-C(1+t)^{1-\gamma_0}-C.
\]
For all sufficiently large $t$, the right-hand side is positive and can be
written as
\[
 c_n(1+t)\bigl(1-O((1+t)^{-\gamma_1})\bigr)
\]
for some $0<\gamma_1\le\gamma_0$. Inverting and using
$(1-z)^{-1}\le1+2z$ for $0\le z\le1/2$ yields
\[
 c_nb_n(t)\le \frac1{1+t}+\frac{C}{(1+t)^{1+\gamma_1}}.
\]
Increasing $C$ handles the bounded interval of earlier times. Renaming
$\gamma_1$ as $\gamma$ proves \eqref{eq:perturbed-forward-sharp}.
\end{proof}

By employing the change of variables $\zeta =\frac{\pi}{m}-\theta$ and $\tau = T - t$, above we obtain the corresponding conclusion for the backwards problem:
\begin{corollary}
\label[corollary]{prop:single:stepfn:perturbed}
Let
\[
 h(\theta)=\sum_{k=1}^n c_k\mathbf 1_{(a_k,b_k]}(\theta),
 \qquad 0<a_1<b_1<\cdots<a_n<b_n<\frac{\pi}{m},
 \qquad 0<c_k\le1,
\]
and let  $\mu \in C^0([0,\infty))$ with $\mu(t) \ge 0$. Let $g(t,\theta)$ satisfy
    \begin{equation*}
    \begin{aligned}
        \partial_t g + \bigl(2G - \mu(t)\sin(2({\textstyle\frac{\pi}{m}}-\theta))\bigr) \partial_\theta g &= 0 \\
                G(t, \theta)&= \int_0^{\frac{\pi}{m}}K_m(\theta, \zeta)g(t,\zeta) \ \mathrm{d}\zeta \\
                g(T, \theta) &= h.
 \end{aligned}
\end{equation*}
Then, for $0 \le t \le T$, we have
\begin{equation*}
    \begin{aligned}
   \|g(t)\|_{L^1} \lesssim \frac{1}{1+T-t} \\
        \frac{\pi}{m}-\inf \operatorname{supp} g(t) &\lesssim \frac{1}{1+T-t} \\
        \int_0^{\frac{\pi}{m}} \sin(2({\textstyle\frac{\pi}{m}}-\theta))g(t,\theta)\  \mathrm{d}\theta &\lesssim \frac{1}{(1+T-t)^2}.
\end{aligned}
\end{equation*}
and the implicit constants are independent of $T$.  Suppose further that
\begin{equation}
\label{eq:mu-uniform-integrability}
 \int_0^T\mu(t)\,\mathrm dt\le C_\mu
\end{equation}
with $C_\mu$ independent of $T$. After
reversing the labels and the coefficients, write the reflected endpoints as
\begin{equation}
\label{eq:reflected-endpoints-corollary}
 x_k(t)={\textstyle\frac{\pi}{m}}-b_{n+1-k}(t),\qquad
 y_k(t)={\textstyle\frac{\pi}{m}}-a_{n+1-k}(t),
 \qquad 0<x_1<y_1<\cdots<x_n<y_n.
\end{equation}
Then there are constants $C,\gamma>0$, independent of $T$, such that
\begin{equation}
\label{eq:backward-refined-endpoints}
 y_n(t)\simeq (1+T-t)^{-1},\qquad
 x_n(t)\le C(1+T-t)^{-1-\gamma},
\end{equation}
and
\begin{equation}
\label{eq:backward-sharp-endpoint}
 c_n y_n(t)\le (1+T-t)^{-1}+C(1+T-t)^{-1-\gamma}.
\end{equation}
In particular,
\begin{equation}
\label{eq:backward-integrable-remainder}
 \int_0^T\bigl(x_n(t)+y_n(t)^2\bigr)\,\mathrm dt\le C.
\end{equation}
\end{corollary}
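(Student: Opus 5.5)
The plan is to reduce the statement to \Cref{lem:single:stepfn:perturbed:fwd} by reflecting both space and time. Set $\zeta=\frac{\pi}{m}-\theta$, $\tau=T-t$, $f(\tau,\zeta)=g(t,\theta)$ and $F(\tau,\zeta)=G(t,\theta)$.

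First I check that the equation has the right form after the change of variables. The kernel satisfies $K_m(\frac{\pi}{m}-\theta,\frac{\pi}{m}-\zeta)=K_m(\theta,\zeta)$, which follows by swapping the two cases in its definition. Hence
\[
F(\tau,\zeta)=\int_0^{\frac{\pi}{m}}K_m(\zeta,\eta)f(\tau,\eta)\,\mathrm d\eta.
\]
Next, $\partial_t=-\partial_\tau$ and $\partial_\theta=-\partial_\zeta$, so the transport equation becomes
\[
\partial_\tau f+\bigl(2F-\mu(T-\tau)\sin(2\zeta)\bigr)\partial_\zeta f=0 .
\]
The datum is $f(0,\zeta)=h(\frac{\pi}{m}-\zeta)$. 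With the relabelling \eqref{eq:reflected-endpoints-corollary}, this is the ordered step function $\sum_k c_{n+1-k}\mathbf 1_{(x_k(T),y_k(T)]}$, up to a measure-zero change of half-open endpoints that transport does not see.

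Put $\tilde\mu(\tau)=\mu(T-\tau)$ for $\tau\in[0,T]$ and extend it by $0$ for $\tau>T$. Strictly speaking this extension is only piecewise continuous. I will either note that the proof of \Cref{lem:single:stepfn:perturbed:fwd} only uses $\mu\ge0$ and local integrability, or mollify the extension and pass to the limit. Since we only look at $\tau\le T$, any nonnegative continuous extension also works; the value of $\tilde\mu$ beyond $T$ does not affect $f$ on $[0,T]$.

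The crucial point is uniformity in $T$. The constants in \Cref{lem:single:stepfn:perturbed:fwd} depend only on the initial step function $f(0)$, on $m$, and (for the refined bounds) on an upper bound for $\|\tilde\mu\|_{L^1}$. Here $f(0)$ is the fixed reflection of $h$, independent of $T$, and $\|\tilde\mu\|_{L^1}\le C_\mu$ by \eqref{eq:mu-uniform-integrability}.

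Applying the corollary at time $\tau=T-t$ gives the three main bounds:

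- $\|g(t)\|_{L^1}=\|f(\tau)\|_{L^1}\lesssim(1+T-t)^{-1}$;
- $\frac{\pi}{m}-\inf\operatorname{supp}g(t)=\sup\operatorname{supp}f(\tau)\lesssim(1+T-t)^{-1}$;
- the weighted integral with $\sin(2(\frac{\pi}{m}-\theta))$ becomes $\int\sin(2\zeta)f\,\mathrm d\zeta\lesssim(1+T-t)^{-2}$.

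Under \eqref{eq:mu-uniform-integrability}, the refined estimates \eqref{eq:perturbed-forward-refined} and \eqref{eq:perturbed-forward-sharp} translate directly, with $b_n\to y_n$, $a_n\to x_n$, and the rightmost coefficient $c_n$ of $f(0)$. The rightmost reflected interval is $(\frac{\pi}{m}-b_1,\frac{\pi}{m}-a_1]$, whose coefficient is $c_1$. The statement writes $c_n$ because it presupposes the coefficients have been reversed ("after reversing the labels and the coefficients"), and I will say this explicitly. Together these give \eqref{eq:backward-refined-endpoints} and \eqref{eq:backward-sharp-endpoint}. Finally, \eqref{eq:backward-integrable-remainder} follows by integration:
\[
\int_0^T x_n\,\mathrm dt\le C\int_0^\infty(1+s)^{-1-\gamma}\,\mathrm ds\qquad\text{and}\qquad\int_0^T y_n^2\,\mathrm dt\le C\int_0^\infty(1+s)^{-2}\,\mathrm ds,
\]
both finite and independent of $T$.

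The main obstacle is the uniformity claim. I must check that in the proof of \Cref{lem:single:stepfn:perturbed:fwd} the constants really depend only on $f(0)$, $m$ and $\|\tilde\mu\|_{L^1}$, and not on, say, $\sup\tilde\mu$ or the horizon. Two places need checking:

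- The Grönwall step for the lower bound on $b_n$ uses only $\int\tilde\mu$.
- The ratio bound uses only the monotonicity of $\sin a_n/\sin b_n$ started from its value at $\tau=0$. Note that the $\tilde\mu$ contribution to this ratio has a sign: the derivative of $\log(\sin a_n/\sin b_n)$ picks up $-\tilde\mu(\cos^2a_n-\cos^2b_n)\le0$.

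Once these dependences are confirmed, the statement follows.
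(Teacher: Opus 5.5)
Your proposal is correct and follows the paper's proof essentially verbatim. You reflect via $\zeta=\frac{\pi}{m}-\theta$, $\tau=T-t$, and use $K_m(\frac{\pi}{m}-\theta,\frac{\pi}{m}-\zeta)=K_m(\theta,\zeta)$ to land in \Cref{lem:single:stepfn:perturbed:fwd} with $\widetilde\mu(\tau)=\mu(T-\tau)$; uniformity in $T$ then comes from the fact that the constants there depend only on the fixed reflected datum, $m$, and $\|\widetilde\mu\|_{L^1}\le C_\mu$. Your remarks on extending $\widetilde\mu$ past $\tau=T$ and on $c_n$ denoting the relabelled rightmost coefficient fill in details the paper leaves implicit; on the first, choose an extension such as zero that keeps $\|\widetilde\mu\|_{L^1}\le C_\mu$, rather than an arbitrary one.
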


\begin{proof}
Set $\zeta={\textstyle\frac{\pi}{m}}-\theta$, $\tau=T-t$, and
$f(\tau,\zeta)=g(T-\tau,{\textstyle\frac{\pi}{m}}-\zeta)$. The symmetry of $K_m$ under simultaneous
reflection shows that $f$ satisfies the forward equation in
\Cref{lem:single:stepfn:perturbed:fwd} with
$\widetilde\mu(\tau)=\mu(T-\tau)$.  The first three estimates therefore follow
from the first part of that corollary.  Under
\eqref{eq:mu-uniform-integrability},
$\|\widetilde\mu\|_{L^1(0,T)}\le C_\mu$.
Applying \eqref{eq:perturbed-forward-refined}--\eqref{eq:perturbed-forward-sharp} on $[0, T]$
gives \eqref{eq:backward-refined-endpoints} and
\eqref{eq:backward-sharp-endpoint}.  Finally,
\eqref{eq:backward-integrable-remainder} follows by integrating
$(1+T-t)^{-1-\gamma}+(1+T-t)^{-2}$.
\end{proof}

\section{An auxiliary construction}
In this section, we consider the following auxiliary question. Suppose we are given two functions
\begin{equation*}
    h_1 := \sum_{k=1}^{n_1} c_{1,k}\mathbf{1}_{(a_{1,k}, b_{1,k}]}(\theta) \quad \text{and}  \quad h_2:=\sum_{k=1}^{n_2} c_{2,k}\mathbf{1}_{(a_{2,k}, b_{2,k}]}(\theta)
\end{equation*}
supported in $(0, \frac{\pi}{m})$ and the coefficients satisfy $0<c_{ik} \le 1$.
\begin{question}
\label[question]{auxiliary-question}
    Can we design an initial configuration $g_0$, a step function, so that the solution $g(t)$ to \eqref{eqn:Euler:reduced:transport}-\eqref{eqn:Euler:reduced:BSlaw} has the following properties?
\begin{enumerate}
    \item We can split $g$ as a sum of two step functions \begin{equation*}
        g(t,\theta) = g_1(t,\theta) + g_2(t, \theta)
         \end{equation*}
         with $ \operatorname{supp} g_1(t)$ lying to the left of $\operatorname{supp} g_2(t)$ and in particular $g_1, g_2$ have disjoint supports for all time. 
    \item For any $\varepsilon > 0$, there is some $T > 0$ such that
        \begin{equation*}
            g_1(0, \theta) = h_1, \quad \|g_2(0)\|_{L^1} \le \varepsilon
        \end{equation*}
        and 
        \begin{equation*}
            \|g_2(T, \theta) - h_2\|_{L^1} \le \varepsilon, \quad \|g_1(T)\|_{L^1} \le \varepsilon
        \end{equation*}
\end{enumerate}
\end{question}

\subsection{A two-target construction} 
Assume that there is a solution $g$ to \Cref{auxiliary-question}. We may then write 
\begin{equation*}
  \begin{aligned}
        g_1(t, \theta) &= \sum_{j=1}^{n_1} c_{1,j}\mathbf{1}_{(a_{1,j}(t), b_{1,j}(t)]}\\
        g_2(t, \theta) &= \sum_{j=1}^{n_2} c_{2,j}\mathbf{1}_{(a_{2,j}(t), b_{2,j}(t)]}\\        
  \end{aligned}
\end{equation*}
We now reduce the PDE on $g$ to a system of ODEs for the endpoint trajectories $a_{1,j}(t), b_{1,j}(t)$, $1\le j \le n_1$ of $g_1$ and $a_{2,j}(t), b_{2,j}(t)$, $1\le j \le n_2$ of $g_2$.
First split the velocity as follows.
\begin{equation*}
    G(t,\theta) = G_1(t,\theta) + G_2(t,\theta), 
\end{equation*}
where
\begin{equation}
\label{eqn:Gk:kernel:expr}
        G_k(t, \theta) := \int_0^{\frac{\pi}{m}}K_m(\theta, \zeta)g_k(t,\zeta) \ \mathrm{d}\zeta,
\end{equation}
for $k = 1,2$. Define also for a function $g$, the weighted integrals
\begin{equation}
    \begin{aligned}
        M[g](t) &:= \int_0^{\frac{\pi}{m}} g(t,\theta) \sin(2({\textstyle\frac{\pi}{m}}-\theta)) \ \mathrm{d} \theta,\\
         m[g](t) &:= \int_0^{\frac{\pi}{m}} g(t,\theta) \sin(2\theta) \ \mathrm{d} \theta
    \end{aligned}
\end{equation}
It is easily checked, in view of \eqref{eqn:G:kernel:expr} that for $\theta \in \mathrm{supp} \  g_1$, we have
\begin{equation}
\begin{aligned}    G(t, \theta) &= G_1(t, \theta) - \frac{1}{2s_m}\sin(2\theta) M[g_{2}](t)
\end{aligned}
\end{equation}
while for $\theta \in \mathrm{supp} \  g_2$, 

\begin{equation}
\begin{aligned}    
G(t, \theta) &= G_2(t, \theta) -\frac{1}{2s_m}\sin(2({\textstyle\frac{\pi}{m}}-\theta))m[g_{1}](t)
\end{aligned}
\end{equation}
Therefore, the trajectories of $g_1$ satisfy
\begin{equation}
\label{eqn:left-packet:ODE}
    \begin{aligned}
        a_{1,j}'(t) &= 2G_1(t, a_{1,j}(t))  -\frac{1}{s_m}\sin(2a_{1,j}(t)) M[g_2](t)\\
        b_{1,j}'(t) &= 2G_1(t, b_{1,j}(t))  -\frac{1}{s_m}\sin(2b_{1,j}(t)) M[g_2](t)\\
    \end{aligned}
\end{equation}
for $j=1,\dots, n_1$, with initial conditions decided by the endpoints in $h_1$, that is, $a_{1,k}(0)=a_{1,k}, b_{1,k}(0) = b_{1,k}$.
The trajectories of $g_2$ satisfy, on the other hand,
\begin{equation}
\label{eqn:right-packet:ODE}
    \begin{aligned}
        a_{2,j}'(t) &= 2G_2(t, a_{2,j}(t))  -\frac{1}{s_m}\sin(2({\textstyle\frac{\pi}{m}}-a_{2,j}(t))) m[g_1](t)\\
        b_{2,j}'(t) &= 2G_2(t, b_{2,j}(t))  -\frac{1}{s_m}\sin(2({\textstyle\frac{\pi}{m}}-b_{2,j}(t))) m[g_1](t)\\
    \end{aligned}
\end{equation}
for $j=1,\dots, n_2$, with terminal conditions decided by the endpoints in $h_2$, that is, $a_{2,k}(T)=a_{2,k}, b_{2,k}(T) = b_{2,k}$.

If $g_1(t,\theta)$ were evolving alone (i.e. if it solves \eqref{eqn:Euler:reduced:transport}-\eqref{eqn:Euler:reduced:BSlaw}), then a trajectory $x(t)$ of $g_1$ would simply satisfy $x'(t) = 2G_1(t, x(t))$. Thus the effect of $g_2$ on the trajectories of $g_1$ is the perturbation $-\frac{1}{s_m}\sin(2x(t)) M[g_2](t)$.  In other words,
\begin{equation*}
    \partial_t g_1 + (2G_1-s_m^{-1}\sin(2\theta) M[g_2](t)) \partial_\theta g_1 = 0
\end{equation*}
on $\operatorname{supp} g_1$. By the same token, the ODEs for the trajectories $x(t)$ of $g_2$ are perturbed by $-\frac{1}{s_m}\sin(2({\textstyle\frac{\pi}{m}}-x(t)))m[g_1](t)$ and 
\begin{equation*}
    \partial_t g_2 + (2G_2-s_m^{-1}\sin(2({\textstyle\frac{\pi}{m}}-\theta))m[g_1](t)) \partial_\theta g_2 = 0
\end{equation*}

We shall construct a solution \Cref{auxiliary-question} in the following way. Let $\bar{g}_1$ be the solution to 
\begin{equation*}
 \begin{aligned}
        \partial_t \bar{g}_1 + 2\bar{G}_1 \partial_\theta \bar{g}_1 &= 0 \\
                \bar{G}_1(t, \theta) &= \int_0^{\frac{\pi}{m}}K_m(\theta, \zeta)\bar{g}_1(t,\zeta) \ \mathrm{d}\zeta, \\
                \bar{g}_1(0, \theta) &= h_1
 \end{aligned}
\end{equation*}
and let $\bar{g}_2$ be the solution of
\begin{equation*}
 \begin{aligned}
        \partial_t \bar{g}_2 + (2\bar{G}_2 - s_m^{-1}\sin(2({\textstyle\frac{\pi}{m}}-\theta))m[\bar{g}_1](t))\partial_\theta \bar{g}_2 &= 0 \\
                \bar{G}_2(t, \theta) &= \int_0^{\frac{\pi}{m}}K_m(\theta, \zeta)\bar{g}_2(t,\zeta) \ \mathrm{d}\zeta, \\
                \bar{g}_2(T, \theta) &= h_2
 \end{aligned}
\end{equation*}

We will select our initial configuration as
\begin{equation}
\label{def:two-target:data}
    g_0(\theta) = h_1 + \bar{g}_2 (0, \theta)
\end{equation}

\begin{remark}
    This is in lieu of the perhaps natural first choice $h_1 + \tilde{g}(0)$ where $\tilde{g}(t)$ solves \eqref{eqn:Euler:reduced:transport}-\eqref{eqn:Euler:reduced:BSlaw} with $\tilde{g}(T) = h_2$.
\end{remark}

Now  by \Cref{prop:single:stepfn}, 
\begin{equation*}
    m[\bar{g}_1](t) \lesssim \frac{1}{(1+t)^2}, \quad t \ge 0.
\end{equation*}
Furthermore, \Cref{prop:single:stepfn:perturbed} implies that for $t \le T$,
\begin{equation}
\label{bar-g_2:bounds}
   \|\bar{g}_2(t)\|_{L^1} \lesssim \frac{1}{(1+T-t)}, \quad \frac{\pi}{m}-\inf \operatorname{supp} \bar{g}_2(t) \lesssim \frac{1}{1+T-t},  \quad M[\bar{g}_2](t) \lesssim \frac{1}{(1+T-t)^2}
\end{equation}
and in particular,
\begin{equation}
\label{g2:init}
    \|\bar{g}_2(0)\|_{L^1} \lesssim \frac{1}{(1+T)}
\end{equation}
and 
\begin{equation}
    \operatorname{supp} \bar{g}_2(0) \subset \left[\frac{\pi}{m}-\frac{C}{1+T}, \frac{\pi}{m}\right]
\end{equation}
Thus if $T$ is sufficiently large, then the supports of $h_1$ and $\bar{g}_2 (0, \theta)$ are disjoint.

Let $g$ be the solution to \eqref{eqn:Euler:reduced:transport}-\eqref{eqn:Euler:reduced:BSlaw} with initial configuration \eqref{def:two-target:data}. Since characteristics do not cross, $g$ is, for all time, a sum of two step functions of disjoint support:
\begin{equation}
    g(t, \theta) = g_1(t, \theta) + g_2(t, \theta)
\end{equation}
By the discussion above,
\begin{equation*}
    \partial_t g_1 + (2G_1-s_m^{-1}\sin(2\theta) M[g_2](t)) \partial_\theta g_1 = 0
\end{equation*}
$g_1(0, \theta) = h_1$, and
\begin{equation*}
    \partial_t g_2 + (2G_2-s_m^{-1}\sin(2({\textstyle\frac{\pi}{m}}-\theta))m[g_1](t)) \partial_\theta g_2 = 0.
\end{equation*}
As $M[g_2](t) \ge 0$, \Cref{lem:single:stepfn:perturbed:fwd} implies that 
\begin{equation}
\label{ineq:g_1:rightendpoint-decay}
      \sup \operatorname{supp} g_1(t) \lesssim \frac{1}{1+t}, \quad \|g_1(t)\|_{L^1} \lesssim \frac{1}{1+t} , \quad m[g_1](t) \lesssim \frac{1}{(1+t)^2} 
\end{equation}
for $t \ge 0$.

We will now show that $g_1$ and $g_2$ are well approximated by $\bar{g}_1$ and $\bar{g}_2$. Choose $C_0$, depending
only on $h_2$ and $m$, so that
\[
 M[\bar g_2](t)\le \frac{C_0}{(1+T-t)^2}.
\]
Since $M[g_2](0)=M[\bar g_2](0)$, define
\[
 T_*:=\sup\left\{\tau\in[0,T]:
 M[g_2](t)\le\frac{2C_0}{(1+T-t)^2}
 \text{ for every }0\le t\le\tau\right\}.
\]
The equality at $t=0$ and the bound for $M[\bar g_2](0)$ show that $T_*>0$. On $[0,T_*]$, \Cref{lem:g_2-barg_2} below gives, for $T$ sufficiently large,
\begin{equation}
\label{eq:g2-distance-closed}
 \mathrm d(g_2(t),\bar g_2(t))+
 \|g_2(t)-\bar g_2(t)\|_{L^1}
 \le \frac{CC_0\log^2(2+T)}{(2+T)(1+T-t)^2}.
\end{equation}
Both $g_2$ and $\bar{g}_2$ are consequently supported at distance $O((1+T-t)^{-1})$ from $\frac{\pi}{m}$. Since the weight $\sin(2(\frac{\pi}{m}-\theta))$ is
$O((1+T-t)^{-1})$ on
this region, comparison of corresponding endpoint integrals gives
\[
 |M[g_2](t)-M[\bar g_2](t)|
 \le \frac{C}{1+T-t}\mathrm d(g_2(t),\bar g_2(t)).
\]
Together with \eqref{eq:g2-distance-closed}, this yields
\begin{equation}
\label{calc:M[g_2]}
 M[g_2](t)
 \le\frac{C_0}{(1+T-t)^2}
 \left(1+C\frac{\log^2(2+T)}{2+T}\right)
 \le\frac{3C_0}{2(1+T-t)^2}
\end{equation}
on $[0,T_*]$, once $T$ is large. Continuity therefore implies $T_*=T$.

It follows from \eqref{eq:g2-distance-closed} and \eqref{bar-g_2:bounds} that
\begin{equation}
\label{calc:g2L1}
 \|g_2(t)\|_{L^1}\lesssim (1+T-t)^{-1},\qquad 0\le t\le T.
\end{equation}
Furthermore, \Cref{lem:g_1-barg_1}, \eqref{calc:M[g_2]}, and
\eqref{eq:inner-integral-bound-oldstyle} below give
\[
 \sup_{0\le t\le T}\|g_1(t)-\bar g_1(t)\|_{L^1}\lesssim (1+T)^{-1}.
\]
Consequently,
\begin{equation}
\label{eq:two-packet-closeness-final}
 \sup_{0\le t\le T}\|g_2(t)-\bar g_2(t)\|_{L^1}
 +\sup_{0\le t\le T}\|g_1(t)-\bar g_1(t)\|_{L^1}
 \lesssim\frac{\log^2(2+T)}{2+T},
\end{equation}
which tends to zero as $T\to\infty$.
Note in particular that
\begin{equation*}
\begin{aligned}
 \|g(0) - h_1\|_{L^1} &=  \|g_2(0)\|_{L^1}=\|\bar{g}_2(0)\|_{L^1}\lesssim \frac{1}{1+T}
\end{aligned}
\end{equation*}
and
\begin{equation*}
\begin{aligned}
 \|g(T) - h_2\|_{L^1} &\le       \|g_2(T) - h_2\|_{L^1} + \|g_1(T)\|_{L^1}\\
 &= \|g_2(T) - \bar{g}_2(T)\|_{L^1} + \|g_1(T)\|_{L^1}\\
 &\le \sup_{0\le t\le T}\|g_2(t) - \bar{g}_2(t)\|_{L^1}+ \|g_1(T)\|_{L^1}\\ &\lesssim \frac{C_0\log^2(1+T)}{(1+T)} + \frac{1}{1+T}
\end{aligned}
\end{equation*}
Let $\varepsilon > 0$ be given. If $T$ is sufficiently large depending only on $h_1, h_2, m, \varepsilon$, we obtain
\begin{equation}
\label{g2:terminal}
     \|g(T) - h_2\|_{L^1}  \le \varepsilon, \quad  \|g(0) - h_1\|_{L^1} \le\varepsilon
\end{equation}
Collecting \eqref{g2:init}, \eqref{ineq:g_1:rightendpoint-decay} and \eqref{g2:terminal}, we obtain a solution to \Cref{auxiliary-question}.
We record this and the estimates \eqref{ineq:g_1:rightendpoint-decay}, \eqref{calc:M[g_2]}, and \eqref{calc:g2L1} below.
\begin{proposition}[Extension Lemma]
\label{lem:insertion}
    Let $0\le h_1,h_2\le1$ be given step functions supported in $(0, \frac{\pi}{m})$. Given $\varepsilon > 0$, there exists $T_0=T_0(h_1,h_2, \varepsilon)$ such that for each $T \ge T_0$, there is a solution $g$ of \eqref{eqn:Euler:reduced:transport}-\eqref{eqn:Euler:reduced:BSlaw} with the following properties.
    \begin{enumerate}
        \item $\operatorname{supp} g(t) \subset (0, \frac{\pi}{m})$ and  \begin{equation*} g(t,\theta) = g_1(t,\theta) + g_2(t, \theta)
         \end{equation*}
         with $ \operatorname{supp} g_1(t)$ lying to the left of $\operatorname{supp} g_2(t)$ and in particular $g_1, g_2$ have disjoint supports for all time. 

         \item 
         \begin{equation*}
             \|g_1(t)\|_{L^1} \lesssim \frac{1}{1+t}, \quad m[g_1](t) \lesssim \frac{1}{(1+t)^2}
         \end{equation*}
         for $t \ge 0$ and 
         \begin{equation*}
              \|g_2(t)\|_{L^1} \lesssim \frac{1}{1+T-t}, \quad M[g_2](t) \lesssim \frac{1}{(1+T-t)^2}
         \end{equation*}
         for $0 \le t \le T$.
        \item \begin{equation}
        \label{ineq:closeness-of-gs}
    \sup_{0\le t\le T}\|g_2(t) - \bar{g}_2(t)\|_{L^1}+\sup_{0\le t\le T}\|g_1(t) - \bar{g}_1(t)\|_{L^1} \lesssim \frac{\log^2(1+T)}{(1+T)}
\end{equation}
where $\bar{g}_1$ is the solution to 
\begin{equation*}
 \begin{aligned}
        \partial_t \bar{g}_1 + 2\bar{G}_1 \partial_\theta \bar{g}_1 &= 0 \\
                \bar{G}_1(t, \theta) &= \int_0^{\frac{\pi}{m}}K_m(\theta, \zeta)\bar{g}_1(t,\zeta) \ \mathrm{d}\zeta, \\
                \bar{g}_1(0, \theta) &= h_1
 \end{aligned}
\end{equation*}
and $\bar{g}_2$ is the solution of
\begin{equation*}
 \begin{aligned}
        \partial_t \bar{g}_2 + (2\bar{G}_2 - s_m^{-1}\sin(2({\textstyle\frac{\pi}{m}}-\theta))m[\bar{g}_1](t))\partial_\theta \bar{g}_2 &= 0 \\
                \bar{G}_2(t, \theta) &= \int_0^{\frac{\pi}{m}}K_m(\theta, \zeta)\bar{g}_2(t,\zeta) \ \mathrm{d}\zeta, \\
                \bar{g}_2(T, \theta) &= h_2
\end{aligned}
\end{equation*}
        \item  \begin{equation*}
            g_1(0, \theta) = h_1, \quad \|g_2(0)\|_{L^1} \le \varepsilon, \quad \|g_1(T)\|_{L^1} \le \varepsilon
        \end{equation*}
        and 
        \begin{equation*}
            \|g(T, \theta) - h_2\|_{L^1} \le \varepsilon,\quad\|g(0, \theta) - h_1\|_{L^1} \le \varepsilon
        \end{equation*}
\end{enumerate}
\end{proposition}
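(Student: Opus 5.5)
The proof follows the construction already laid out before the statement, with the two deferred stability lemmas (\Cref{lem:g_2-barg_2} and \Cref{lem:g_1-barg_1}) supplying the only genuinely new estimates. I fix $h_1,h_2$ and define $\bar g_1$ by the forward unperturbed flow from $h_1$. I define $\bar g_2$ by the backward flow from $h_2$ at time $T$, with the nonnegative forcing $\mu(t)=s_m^{-1}m[\bar g_1](t)$ in the $\sin(2(\frac{\pi}{m}-\theta))$ direction. Since $m[\bar g_1](t)\lesssim(1+t)^{-2}$ by \Cref{prop:single:stepfn}, $\mu$ is integrable with a bound independent of $T$. Hence \Cref{prop:single:stepfn:perturbed} applies with $T$-independent constants and gives \eqref{bar-g_2:bounds}. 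In particular $\bar g_2(0)$ is supported in $[\frac{\pi}{m}-\frac{C}{1+T},\frac{\pi}{m})$ and has $L^1$ norm $\lesssim (1+T)^{-1}$. I then take $g_0=h_1+\bar g_2(0)$. For $T\ge T_0$ its two pieces have disjoint supports. Non-crossing of characteristics keeps the splitting $g=g_1+g_2$ ordered for all time, which gives item 1.

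For item 2, the pieces are handled separately. Since $M[g_2]\ge0$, $g_1$ solves the forward perturbed problem of \Cref{lem:single:stepfn:perturbed:fwd}, so its bounds hold with constants depending only on $h_1,m$. For $g_2$, I run the bootstrap on $T_*$ described above. On $[0,T_*]$ the hypothesis $M[g_2]\le 2C_0(1+T-t)^{-2}$ makes the forcing on $g_1$ integrable uniformly in $T$. Combined with \Cref{lem:g_2-barg_2}, this gives \eqref{eq:g2-distance-closed}. Both supports lie within $O((1+T-t)^{-1})$ of $\frac{\pi}{m}$, where the weight of $M$ is small. Comparing endpoints then improves the bootstrap constant to $\frac32 C_0$ once $\log^2(2+T)/(2+T)$ is small, and continuity forces $T_*=T$. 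The $L^1$ bound \eqref{calc:g2L1} follows from \eqref{eq:g2-distance-closed} and \eqref{bar-g_2:bounds}.

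For item 3, the $g_2$ half is \eqref{eq:g2-distance-closed} evaluated on all of $[0,T]$. The $g_1$ half is \Cref{lem:g_1-barg_1}. The only difference between the forcings of $g_1$ and $\bar g_1$ is $s_m^{-1}M[g_2]\sin(2\theta)$. By \eqref{calc:M[g_2]}, its time integral is $\lesssim\int_0^T(1+T-t)^{-2}\,dt$, and the quantity that actually enters is $\lesssim(1+T)^{-1}$ by \eqref{eq:inner-integral-bound-oldstyle}. Adding the two halves gives \eqref{ineq:closeness-of-gs}.

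For item 4, $g_1(0)=h_1$ by construction, and $\|g(0)-h_1\|_{L^1}=\|g_2(0)\|_{L^1}=\|\bar g_2(0)\|_{L^1}\lesssim(1+T)^{-1}$. Item 2 gives $\|g_1(T)\|_{L^1}\lesssim(1+T)^{-1}$. For the terminal error I use $\|g(T)-h_2\|_{L^1}\le\|g_2(T)-\bar g_2(T)\|_{L^1}+\|g_1(T)\|_{L^1}\lesssim\log^2(2+T)/(2+T)$, since $\bar g_2(T)=h_2$. I then choose $T_0$ so that all these quantities are $\le\varepsilon$.

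The main obstacle is \Cref{lem:g_2-barg_2}. The difficulty is that $g_2$ is prescribed at time $0$ and not at time $T$: the actual and model evolutions agree initially, and the error must be propagated forward through the expansive region near $\frac{\pi}{m}$ without losing factors of $T$. I would first try a linearized endpoint-difference (Grönwall) argument in the reflected variables. The forcing discrepancy $|m[g_1]-m[\bar g_1]|$ would be controlled by the $g_1$ estimate, and the sharp rate \eqref{eq:backward-sharp-endpoint} together with the integrable remainder \eqref{eq:backward-integrable-remainder} would keep the growth of the endpoint linearization to a factor of $\log^2$ at most.
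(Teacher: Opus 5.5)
Your argument is the paper's own: the same data $g_0=h_1+\bar g_2(0)$, the same bootstrap on $T_*$ closed through \Cref{lem:g_2-barg_2} and the endpoint comparison for $M[g_2]$, and the same use of \Cref{lem:g_1-barg_1} with \eqref{eq:inner-integral-bound-oldstyle} for the $g_1$ half, so it is correct modulo those lemmas exactly as in the paper. One correction to your heuristic for \Cref{lem:g_2-barg_2}: the linearized endpoint dynamics does not keep the loss to a factor of $\log^2$---it genuinely amplifies errors from time $s$ to time $t$ by roughly $\bigl((1+T-s)/(1+T-t)\bigr)^2$, up to $(1+T)^2$ (the time-shift mode $\bar X'(t)$ does exactly this), and the sharp rate \eqref{eq:backward-sharp-endpoint} only pins this exponent at $2$ so the paper can absorb it with the weight $E=(1+T-t)^2|D|$, while the $\log^2$ comes from the forcing functional $\mathcal F_T$, whose smallness (together with a continuation argument for the $|D|^2$ term) is what closes the estimate.
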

\begin{remark}
    It is useful to view \Cref{lem:insertion} as an ``extension lemma'' in the following sense. We think of $g_1$ with the desired behavior as already given, and append $g_2$ to it without affecting $g_1$ ``too much''. Of course, this extension is performed nonlinearly, which is why the proof is somewhat involved. This point of view is useful because the procedure can be iterated; we take this up in the next section.
\end{remark}

\begin{remark}
By time translation invariance, we can replace the times $0$ and $T$ in \Cref{lem:insertion} by the times $t_1$ and $t_2 = t_1+T$.
\end{remark}

\subsubsection{Estimate of $\bar{g}_1 - g_1$}
Note that both $g_1$ and $\bar{g}_1$ have the same topology (i.e. same arrangement of steps for all time, both being deformations of $h_1$) and their endpoints begin initially at the same location. Suppose momentarily that the endpoint trajectories of  $g_1$ are 
 \begin{equation*}
    0 < a_1(t) < b_1(t) < a_2(t) < b_2(t) < \cdots < a_{n_1}(t) < b_{n_1}(t) < \frac{\pi}{m}
\end{equation*}
while those of $\bar{g}_1$ are
 \begin{equation*}
    0 < \bar{a}_1(t) < \bar{b}_1(t) < \bar{a}_2(t) < \bar{b}_2(t) < \cdots < \bar{a}_{n_1}(t) < \bar{b}_{n_1}(t) < \frac{\pi}{m}
\end{equation*}
We will next obtain a bound on 
\begin{equation}
\label{def:dist-function}
   \mathrm{d}(g_1(t), \bar{g}_1(t)):=\max_{j}\{|\bar{a}_j(t)-a_j(t)| + |\bar{b}_j(t)-b_j(t)|\}.
\end{equation}
In the following arguments, the observation mentioned in \Cref{rmk:non-endpoint:decay}, or more specifically, the integrability of endpoint trajectories to the left of the last one, is crucial to avoid growth at a Gr\"{o}nwall step (see in particular \eqref{ineq:D(t):quadratic-est}).  
\begin{lemma}
\label[lemma]{lem:g_1-barg_1}
The following estimate holds, where the implicit constants depend on $h_1$.
    \begin{equation*}
        \mathrm{d}(g_1(t), \bar{g}_1(t))= \max_{j}\{|\bar{a}_j(t)-a_j(t)| + |\bar{b}_j(t)-b_j(t)|\}\lesssim\int_0^t \frac{M[g_2](s)}{1+s} \ \mathrm{d}s
    \end{equation*}
Consequently
\begin{equation*}
    \|g_1(t) - \bar{g}_1(t)\|_{L^1} \lesssim \int_0^t \frac{M[g_2](s)}{1+s} \ \mathrm{d}s
\end{equation*}
\end{lemma}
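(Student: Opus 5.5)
We compare the two endpoint systems directly. Denote by $z_i(t)$ the $2n_1$ endpoints of $g_1$ (ordered), and by $\bar z_i(t)$ those of $\bar g_1$, with $z_i(0)=\bar z_i(0)$. By \eqref{eqn:left-packet:ODE},
\begin{equation*}
 z_i' - \bar z_i' = 2\bigl(G_1(t,z_i)-\bar G_1(t,\bar z_i)\bigr) - \frac{1}{s_m}\sin(2z_i)M[g_2](t).
\end{equation*}
The forcing term is bounded by $C z_i(t) M[g_2](t)\lesssim M[g_2](t)/(1+t)$, using \eqref{ineq:g_1:rightendpoint-decay}. This is exactly the integrand in the claimed bound. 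The plan is therefore to show that the homogeneous part is Lipschitz in $D(t):=\mathrm d(g_1(t),\bar g_1(t))$ with an integrable Lipschitz constant. Gr\"onwall then gives
\[
D(t)\lesssim \int_0^t M[g_2](s)(1+s)^{-1}\,\mathrm ds
\]
with no exponential loss.

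For the homogeneous part, write $2G_1(t,z_i)$ explicitly as in \eqref{b_k:ode:explicit}. It is a smooth function $\Phi_i(z_1,\dots,z_{2n_1})$, a finite sum of products of $\sin(2\cdot)$, $\sin(2(\frac{\pi}{m}-\cdot))$ and differences $\cos(2z_{j+1})-\cos(2z_j)$. The same $\Phi_i$ governs $\bar z$. Then $\Phi_i(z)-\Phi_i(\bar z)=\sum_j \partial_j\Phi_i(\xi)(z_j-\bar z_j)$ at an intermediate point $\xi$. The key step is to estimate the partial derivatives. Every term of $\Phi_i$ is quadratic in small variables: a factor $\sin(2\cdot)\approx$ a point, times a difference of cosines $\approx$ difference of squares. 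Hence $|\partial_j\Phi_i(\xi)|\lesssim \max_k \xi_k\lesssim b_{n_1}(t)+\bar b_{n_1}(t)\lesssim (1+t)^{-1}$.

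This only yields $D'\lesssim D/(1+t)+M/(1+t)$, whose Gr\"onwall factor grows like a power of $t$. This is the main obstacle, and it is resolved as \Cref{rmk:non-endpoint:decay} suggests. Inspect which derivatives are of size $b_{n_1}$ and which are smaller.

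\emph{The rightmost endpoint $b_{n_1}$.} The only non-integrable contribution is the self-interaction $\partial_{b_{n_1}}\Phi_{b_{n_1}}\approx -2c_{n_1}b_{n_1}$. It comes from $-c_{n_1}(b_{n_1}^2-a_{n_1}^2)$ and has a \emph{favorable sign}: it contracts, so it can be dropped when estimating $\frac{\mathrm d}{\mathrm dt}|b_{n_1}-\bar b_{n_1}|$. All other derivatives involving $b_{n_1}$ carry a factor of some left endpoint $a_{n_1}$ (or $\xi$ near it), or of $b_{n_1}^2$. Both are integrable in time, by \Cref{lem:single:stepfn:perturbed:fwd} and \Cref{prop:single:stepfn} (the refined bound $a_{n_1}\lesssim(1+t)^{-1-\gamma}$, valid for both $g_1$ and $\bar g_1$ since $M[g_2]$ is integrable by \eqref{calc:M[g_2]}).

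\emph{The other endpoints.} For $z_i\ne b_{n_1}$, the derivative $\partial_j\Phi_i$ contains a factor $\sin(2z_i)\lesssim a_{n_1}(t)$, apart from the term $\sin(2(\frac\pi m - z_i))\cdot c_j(\cos 2b_j-\cos 2a_j)$, whose derivative is $\lesssim z_j$. For $j\ne b_{n_1}$ this is again integrable. For the dependence of a left endpoint on $b_{n_1}$, the coefficient is $\sin(2z_i)\cdot b_{n_1}\lesssim a_{n_1}$, also integrable.

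Thus one obtains
\begin{equation}
\label{ineq:D(t):quadratic-est}
 D'(t)\le \beta(t)D(t)+C\frac{M[g_2](t)}{1+t},\qquad \beta(t):=C\bigl(a_{n_1}+\bar a_{n_1}+b_{n_1}^2+\bar b_{n_1}^2\bigr)\in L^1(0,\infty).
\end{equation}
This is understood in the sense of upper Dini derivatives of a max of absolute values; the sign argument is applied to the $b_{n_1}$ component. Gr\"onwall with $D(0)=0$ gives the first claim, with constant $e^{\|\beta\|_{L^1}}$, which depends only on $h_1$ and the bound on $\|M[g_2]\|_{L^1}$.

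The $L^1$ bound follows because $g_1-\bar g_1$ is supported in the symmetric differences of corresponding intervals. Each has measure at most $|a_j-\bar a_j|+|b_j-\bar b_j|$, and the coefficients satisfy $c_j\le1$, so $\|g_1-\bar g_1\|_{L^1}\le n_1 D(t)$.
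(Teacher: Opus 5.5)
Your strategy is the paper's: linearize the endpoint system along the segment joining the two ordered configurations, show the linearization has an integrable one-sided growth rate, and apply Gr\"onwall with forcing $\lesssim M[g_2](t)/(1+t)$. You use an $\ell^\infty$ norm with Dini derivatives instead of $\frac12\frac{d}{dt}|D|^2$ and the symmetric part of $Q$. That substitution is harmless here, because every off-diagonal Jacobian entry, including $\partial_{b_k}\Phi_{a_k}$ and $\partial_{a_k}\Phi_{b_k}$, is $\lesssim a_{n_1}$ in absolute value; you don't even need the paper's antisymmetry observation.

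The problem is the Jacobian inspection, which is the crux. It is false that the only non-integrable entry is $\partial_{b_{n_1}}\Phi_{b_{n_1}}$. For a left endpoint $a_k$, $\Phi_{a_k}$ contains
\[
-\frac{1}{2s_m}\sin(2a_k)\sum_{j\ge k}c_{1,j}\bigl\{\cos(2(\tfrac{\pi}{m}-b_j))-\cos(2(\tfrac{\pi}{m}-a_j))\bigr\}.
\]
Differentiating in $a_k$ itself, the product rule hits the prefactor and removes the factor $\sin(2a_k)$ you rely on. The $j=n_1$ term then contributes $\approx-2c_{1,n_1}(b_{n_1}-a_{n_1})\simeq-(1+t)^{-1}$. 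The same happens for $b_k$ with $k<n_1$, via the sum over $j>k$. In fact every diagonal entry equals $-\lambda+O(a_{n_1}+b_{n_1}^2)$ with $\lambda\simeq 2c_{1,n_1}(b_{n_1}-a_{n_1})$. This is the linear contraction that the last step imposes on everything to its left; it is the same $\lambda$ that appears with the unfavourable sign in \Cref{lem:g_2-barg_2}. If, as you write, the sign argument is used only for the $b_{n_1}$ component and the other rows are bounded in absolute value, Gr\"onwall produces the factor $\exp(C\int_0^t b_{n_1})\sim(1+t)^{C}$. That is exactly the loss you set out to avoid.

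The repair is what the paper does. These diagonal contributions are negative: $\cos(2z)>0$ on $[0,\frac{\pi}{m})$ for $m\ge4$, and $\cos(2(\frac{\pi}{m}-b_j))>\cos(2(\frac{\pi}{m}-a_j))$. So drop them in \emph{every} component. Only the positive parts of the diagonal entries then need control, and these are $\lesssim a_{n_1}+b_{n_1}^2$ at any ordered point. With that change, your differential inequality, with $\beta$ exactly as you defined it, is correct.

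One further correction concerns how you justify $\beta\in L^1$. Do not use the refined bound $a_{n_1}\lesssim(1+t)^{-1-\gamma}$ together with \eqref{calc:M[g_2]}. That estimate is derived from \Cref{lem:g_2-barg_2}, which itself uses the present lemma, so the citation is circular. Instead, \Cref{lem:single:stepfn:perturbed:fwd} gives $\int_0^\infty a_{n_1}\,dt<\infty$ for every continuous $\mu\ge0$, from $r'\le-\kappa_1 a_{n_1}$. The bound $b_{n_1}\lesssim(1+t)^{-1}$ likewise holds for every such $\mu$, with constants depending only on $h_1$ and $m$. This is what makes the implicit constant depend only on $h_1$, as the statement asserts, rather than on $\|M[g_2]\|_{L^1}$.
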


\begin{proof}
Since $\partial_t g_1 + (2G_1-s_m^{-1}\sin(2\theta) M[g_2](t)) \partial_\theta g_1 = 0$ and $ \partial_t \bar{g}_1 + 2\bar{G}_1 \partial_\theta \bar{g}_1 = 0$, the trajectories of left endpoints satisfy
    \begin{equation*}
    \begin{aligned}
       a_k'(t) &= \frac{1}{2s_m}\sin(2({\textstyle\frac{\pi}{m}}-a_k(t)))\sum_{j=1}^{k-1} c_{1,j}\{\cos(2b_j(t)) - \cos(2a_j(t))\} \\
            & \ - \frac{1}{2s_m}\sin\left(2a_k(t)\right)\sum_{j=k}^{n_1} c_{1,j}\left\{\cos(2({\textstyle\frac{\pi}{m}}-b_j(t))) - \cos(2({\textstyle\frac{\pi}{m}}-a_j(t)))\right\} \\
            &-\frac{1}{s_m}\sin(2a_k(t))M[g_2](t)
    \end{aligned}
\end{equation*}
and
\begin{equation*}
    \begin{aligned}
       \bar{a}_k'(t) &= \frac{1}{2s_m}\sin(2({\textstyle\frac{\pi}{m}}-\bar{a}_k(t)))\sum_{j=1}^{k-1} c_{1,j}\{\cos(2\bar{b}_j(t)) - \cos(2\bar{a}_j(t))\} \\
            & \ - \frac{1}{2s_m}\sin\left(2\bar{a}_k(t)\right)\sum_{j=k}^{n_1} c_{1,j}\left\{\cos(2({\textstyle\frac{\pi}{m}}-\bar{b}_j(t))) - \cos(2({\textstyle\frac{\pi}{m}}-\bar{a}_j(t)))\right\}
    \end{aligned}
\end{equation*}

We introduce some notation below for concision. Define
\begin{equation*}
    \begin{aligned}
        V_{a_k}(x_1,\dots, x_{n_1},y_1, y_2, \dots, y_{n_1}) &:= \frac{1}{2s_m}\sin(2({\textstyle\frac{\pi}{m}}-x_k))\sum_{j=1}^{k-1} c_{1,j}\{\cos(2y_j) - \cos(2x_j)\} \\
            & \ - \frac{1}{2s_m}\sin\left(2x_k\right)\sum_{j=k}^{n_1} c_{1,j}\left\{\cos(2({\textstyle\frac{\pi}{m}}-y_j)) - \cos(2({\textstyle\frac{\pi}{m}}-x_j))\right\},
    \end{aligned}
\end{equation*}
and set $V_a :=(V_{a_1}, \dots, V_{a_{n_1}})$, $A(t) := (a_1(t), \dots, a_{n_1}(t))$, $\bar{A}(t) = (\bar{a}_1(t), \dots, \bar{a}_{n_1}(t))$, $B(t) := (b_1(t), \dots, b_{n_1}(t))$, $\bar{B}(t) = (\bar{b}_1(t), \dots, \bar{b}_{n_1}(t))$. Define also 
\begin{equation*}
    S(x_1,\dots, x_{n_1}) = s_m^{-1}(\sin(2x_1), \dots, \sin(2x_{n_1}))
\end{equation*}
Then we may write
\begin{equation*}
    \begin{aligned}
        A'(t) &= V_a(A(t), B(t)) - M[g_2](t) S(A(t)) \\
        \bar{A}'(t) &= V_a(\bar{A}(t),\bar{B}(t) )
    \end{aligned}
\end{equation*}
while $A(0) = \bar{A}(0)$.
Similarly, recalling the right-end point ODEs
\begin{equation*}
    \begin{aligned}
      b_k'(t)&= \frac{1}{2s_m}\sin(2({\textstyle\frac{\pi}{m}}-b_k(t)))\sum_{j=1}^{k} c_{1,j}\{\cos(2b_j(t)) - \cos(2a_j(t))\} \\
            & \ - \frac{1}{2s_m}\sin(2b_k(t))\sum_{j=k+1}^{n_1} c_{1,j}\left\{\cos(2({\textstyle\frac{\pi}{m}}-b_j(t))) - \cos(2({\textstyle\frac{\pi}{m}}-a_j(t)))\right\}\\
            &-\frac{1}{s_m}\sin(2b_k(t))M[g_2](t)
    \end{aligned}
\end{equation*}
and the corresponding equation for $\bar{b}_k'(t)$, we can write 
\begin{equation*}
    \begin{aligned}
        B'(t) &= V_b(A(t), B(t)) - M[g_2](t) S(B(t)) \\
        \bar{B}'(t) &= V_b(\bar{A}(t), \bar{B}(t) )
    \end{aligned}
\end{equation*}
where $V_b = (V_{b_1}, \dots, V_{b_{n_1}})$ and
\begin{equation*}
\begin{aligned}
        V_{b_k}(x_1,\dots, x_{n_1},y_1, y_2, \dots, y_{n_1}) &= \frac{1}{2s_m}\sin(2({\textstyle\frac{\pi}{m}}-y_k))\sum_{j=1}^{k} c_{1,j}\{\cos(2y_j) - \cos(2x_j)\} \\
            & \ - \frac{1}{2s_m}\sin(2y_k)\sum_{j=k+1}^{n_1} c_{1,j}\left\{\cos(2({\textstyle\frac{\pi}{m}}-y_j)) - \cos(2({\textstyle\frac{\pi}{m}}-x_j))\right\}\\ 
\end{aligned}
\end{equation*}
Set 
\begin{equation}
\label{def:simplifying}
    \begin{aligned}
     V = (V_a, V_b), \quad Z =    (A,  B), \quad \bar{Z} = (\bar{A}, \bar{B}),\quad D(t) = \bar{Z}-Z,
    \end{aligned}
\end{equation}
and
\begin{equation}
\label{def:simplifying:sine_vec}
    \mathcal{S}(x_1,\dots,x_{n_1}, y_1,\dots, y_{n_1}) = (S(x_1,\dots, x_{n_1}), S(y_1, \dots, y_{n_1}))
\end{equation}
Then we have

\begin{equation}
\label{eqn:simplified:ZZ_bar}
    Z'(t) = V(Z(t)) - M[g_2](t)\mathcal{S}(Z(t)), \quad  \bar{Z}'(t) = V(\bar{Z}(t))
\end{equation}
and
\begin{equation}
\label{eqn:D(t)}
\begin{aligned}
        D'(t) &=V(\bar{Z}(t))- V(Z(t)) +M[g_2](t) \mathcal{S}(Z(t))  \\
        &= Q(t)D(t) + M[g_2](t) \mathcal{S}(Z(t))
\end{aligned} 
\end{equation}
where
\begin{equation*}
\begin{aligned}
    Q(t) &= \int_0^1 \nabla V((\bar{A}(t), \bar{B}(t)) - s D(t)) \ \mathrm{d}s
\end{aligned}
\end{equation*}
We claim that
\begin{equation}
\label{ineq:D(t):quadratic-est}
    D(t)^{\top}Q(t)D(t) \lesssim \Lambda(t)|D(t)|^2
\end{equation}
where \[\int_0^\infty\Lambda(t) \mathrm{d}t < \infty.\]
We relegate the proof of the claim \eqref{ineq:D(t):quadratic-est} to the end of the section below. 
Assuming momentarily \eqref{ineq:D(t):quadratic-est}, we have
\begin{equation*}
        \begin{aligned}
       \frac{1}{2} \frac{\mathrm{d}}{\mathrm{d}t}|D(t)|^2 &= D(t)^{\top}Q(t)D(t) + M[g_2](t) D(t)^{\top}\mathcal{S}(Z(t)) \\
        &\lesssim \Lambda(t)|D(t)|^2 + M[g_2](t) |D(t)|(|S(B(t))|+|S(A(t))|) \\
        &\lesssim  \Lambda(t)|D(t)|^2 + M[g_2](t) \frac{|D(t)|}{1+t}
    \end{aligned}
\end{equation*}
since by \eqref{ineq:g_1:rightendpoint-decay}
\begin{equation*}
    |S(B(t))| \lesssim |B(t)| \lesssim (1+t)^{-1}, \quad  |S(A(t))| \lesssim |A(t)| \lesssim (1+t)^{-1}
\end{equation*}
Hence
\begin{equation*}
        \begin{aligned}
        \frac{\mathrm{d}}{\mathrm{d}t}|D(t)|
        &\lesssim  \Lambda(t)|D(t)| +  \frac{M[g_2](t)}{1+t}
    \end{aligned}
\end{equation*}
and by Gr\"{o}nwall's inequality, noting $|D(0)| = 0$, 
\begin{equation}
    |D(t)| \lesssim e^{\int_0^\infty \Lambda(s) \ \mathrm{d}s}\int_0^t \frac{M[g_2](s)}{1+s} \ \mathrm{d}s
\end{equation}
as desired. Hence
\begin{equation*}
     \max_{j}\{|\bar{a}_j(t)-a_j(t)| + |\bar{b}_j(t)-b_j(t)|\}\lesssim\int_0^t \frac{M[g_2](s)}{1+s} \ \mathrm{d}s
\end{equation*}
and
\begin{equation*}
\begin{aligned}
        \|g_1(t) - \bar{g}_1(t)\|_{L^1}  &= \left\|\sum_{j=1}^{n_1} c_{1,j}(\mathbf{1}_{(a_{j}(t), b_{j}(t)]}-\mathbf{1}_{(\bar{a}_{j}(t), \bar{b}_{j}(t)]})\right\|_{L^1} \\
        &\lesssim  \max_{j}\{|\bar{a}_j(t)-a_j(t)| + |\bar{b}_j(t)-b_j(t)|\}\lesssim\int_0^t \frac{M[g_2](s)}{1+s} \ \mathrm{d}s.
\end{aligned}
\end{equation*}
\end{proof}

\begin{proofof}{\eqref{ineq:D(t):quadratic-est}}
\label{proof:D(t):quadratic-est}
    We compute $\nabla V$. Let us write $x = (x_1, \dots, x_{n_1}), y = (y_1, \dots, y_{n_1})$ and we say that $(x,y)$ is ordered provided $x_1 < y_1 < x_2 < y_2 < \dots < x_{n_1} < y_{n_1}$. We henceforth assume that $(x,y)$ is ordered. 

Write $\nabla V$ in the block form
\begin{equation*}
    \nabla V = \begin{bmatrix}
        \nabla_x V_a & \nabla_y V_a \\\nabla_x V_b & \nabla_y V_b
    \end{bmatrix}
\end{equation*}
i.e. $ (\nabla_x V_a)_{kj} =\frac{\partial V_{a_k}}{\partial x_j}$ and so on. We compute the following
\begin{enumerate}
\item \underline{ Diagonal terms in  $\nabla_y V_a$ and $\nabla_x V_b$}

Note that
\begin{equation*}
    \begin{aligned}
        \frac{\partial V_{a_k}}{\partial y_k} &= -c_{1,k}\frac{1}{s_m}\sin(2x_k)\sin(2({\textstyle\frac{\pi}{m}}-y_k)) \\
        \frac{\partial V_{b_k}}{\partial x_k} &= c_{1,k}\frac{1}{s_m}\sin(2x_k)\sin(2({\textstyle\frac{\pi}{m}}-y_k))
    \end{aligned}
\end{equation*}
i.e. $\frac{\partial V_{a_k}}{\partial y_k} = - \frac{\partial V_{b_k}}{\partial x_k}$.

This says that the diagonal of $\nabla_y V_a$ and that of $\nabla_x V_b$ have opposite signs. 
\item \underline{Off-diagonal terms in each block}

Compute for $j < k$,
\begin{equation*}
    \begin{aligned}
         \frac{\partial V_{a_k}}{\partial x_j} &= \frac{c_{1,j}}{s_m}\sin(2({\textstyle\frac{\pi}{m}}-x_k))\sin(2x_j), \quad  \frac{\partial V_{a_k}}{\partial y_j} =-\frac{c_{1,j}}{s_m}\sin(2({\textstyle\frac{\pi}{m}}-x_k))\sin(2y_j)
    \end{aligned}
\end{equation*}
and
\begin{equation*}
    \begin{aligned}
         \frac{\partial V_{b_k}}{\partial x_j} &= \frac{c_{1,j}}{s_m}\sin(2({\textstyle\frac{\pi}{m}}-y_k))\sin(2x_j), \quad  \frac{\partial V_{b_k}}{\partial y_j} =-\frac{c_{1,j}}{s_m}\sin(2({\textstyle\frac{\pi}{m}}-y_k))\sin(2y_j)
    \end{aligned}
\end{equation*}
 Thus,
\begin{equation*}
    \left|\frac{\partial V_{a_k}}{\partial x_j}\right| + \left| \frac{\partial V_{a_k}}{\partial y_j}\right| \le C(\sin(2x_j)+\sin(2y_j)) \le C y_j \le Cx_{n_1}
\end{equation*}
and likewise
\begin{equation*}
    \left|\frac{\partial V_{b_k}}{\partial x_j}\right| + \left| \frac{\partial V_{b_k}}{\partial y_j}\right| \le Cx_{n_1}
\end{equation*}

For $j > k$, we have
\begin{equation*}
    \begin{aligned}
         \frac{\partial V_{a_k}}{\partial x_j} &= \frac{c_{1,j}}{s_m}\sin(2({\textstyle\frac{\pi}{m}}-x_j))\sin(2x_k), \quad  \frac{\partial V_{a_k}}{\partial y_j} =-\frac{c_{1,j}}{s_m}\sin(2({\textstyle\frac{\pi}{m}}-y_j))\sin(2x_k) \\
        \frac{\partial V_{b_k}}{\partial x_j} &= \frac{c_{1,j}}{s_m}\sin(2({\textstyle\frac{\pi}{m}}-x_j))\sin(2y_k), \quad  \frac{\partial V_{b_k}}{\partial y_j} =-\frac{c_{1,j}}{s_m}\sin(2({\textstyle\frac{\pi}{m}}-y_j))\sin(2y_k)
    \end{aligned}
\end{equation*}
and hence also
\begin{equation*}
    \left|\frac{\partial V_{a_k}}{\partial y_j}\right| + \left| \frac{\partial V_{a_k}}{\partial x_j}\right| \le Cx_{n_1},  \quad    \left|\frac{\partial V_{b_k}}{\partial y_j}\right| + \left| \frac{\partial V_{b_k}}{\partial x_j}\right| \le Cx_{n_1}
\end{equation*}

Altogether, we have that the off-diagonal terms in each of the blocks $ \nabla_x V_a, \nabla_y V_a,\nabla_x V_b, \nabla_y V_b$ are bounded in absolute value by $Cx_{n_1}$. 
\item \underline{Main-diagonal terms}

For $j=k$, compute
\begin{equation*}
    \begin{aligned}
        \frac{\partial V_{a_k}}{\partial x_k} &= -\frac{1}{s_m}\cos(2({\textstyle\frac{\pi}{m}}-x_k))\sum_{j=1}^{k-1} c_{1,j}\{\cos(2y_j) - \cos(2x_j)\} \\
         & \ - \frac{1}{s_m}\cos\left(2x_k\right)\sum_{j=k}^{n_1} c_{1,j}\left\{\cos(2({\textstyle\frac{\pi}{m}}-y_j)) - \cos(2({\textstyle\frac{\pi}{m}}-x_j))\right\}\\
         &+\frac{c_{1,k}}{s_m}\sin\left(2x_k\right)\sin(2({\textstyle\frac{\pi}{m}}-x_k)) \\
    \end{aligned}
\end{equation*}

\begin{equation*}
    \begin{aligned}
         \frac{\partial V_{b_k}}{\partial y_k} &= -\frac{1}{s_m}\cos(2({\textstyle\frac{\pi}{m}}-y_k))\sum_{j=1}^{k} c_{1,j}\{\cos(2y_j) - \cos(2x_j)\} \\
         &-\frac{c_{1,k}}{s_m}\sin(2({\textstyle\frac{\pi}{m}}-y_k))\sin(2y_k)\\
            & \ - \frac{1}{s_m}\cos(2y_k)\sum_{j=k+1}^{n_1} c_{1,j}\left\{\cos(2({\textstyle\frac{\pi}{m}}-y_j)) - \cos(2({\textstyle\frac{\pi}{m}}-x_j))\right\}\\ 
    \end{aligned}
\end{equation*}

Let $x_+ := \max\{x ,0\}$. Then by the ordering assumption,
\begin{equation*}
    \begin{aligned}
        \left(\frac{\partial V_{a_k}}{\partial x_k}\right)_+ &\le -\frac{1}{s_m}\cos(2({\textstyle\frac{\pi}{m}}-x_k))\sum_{j=1}^{k-1} c_{1,j}\{\cos(2y_j) - \cos(2x_j)\} \\
        &+\frac{c_{1,k}}{s_m}\sin\left(2x_k\right)\sin(2({\textstyle\frac{\pi}{m}}-x_k)) \\
        &=\frac{2}{s_m}\cos(2({\textstyle\frac{\pi}{m}}-x_k))\sum_{j=1}^{k-1} c_{1,j}\{\sin^2(y_j) - \sin^2(x_j)\} \\
         &+\frac{c_{1,k}}{s_m}\sin\left(2x_k\right)\sin(2({\textstyle\frac{\pi}{m}}-x_k)) \\
         &\le C\sum_{j=1}^{k-1} \sin^2(y_j)+2\sin(2x_k)\\
         &\le C\sum_{j=1}^{k-1} y_j^2 +4x_k \lesssim x_{n_1}+x_{n_1}^2 \lesssim x_{n_1}\\
    \end{aligned}
\end{equation*}
and
\begin{equation*}
\begin{aligned}
        \left(\frac{\partial V_{b_k}}{\partial y_k}\right)_+ &\le   -\frac{1}{s_m}\cos(2({\textstyle\frac{\pi}{m}}-y_k))\sum_{j=1}^{k} c_{1,j}\{\cos(2y_j) - \cos(2x_j)\} \\
        &\le C\sum_{j=1}^{k} y_j^2 \lesssim y_{n_1}^2
\end{aligned}
\end{equation*}
This shows that 
the terms in the diagonal of $\nabla V$ have positive parts that are bounded by $\lesssim x_{n_1} + y_{n_1}^2$ in the region where $(x,y)$ is ordered.
\end{enumerate}

Denote by $Q_{\operatorname{sym}}(t)$ the symmetric part of $Q(t)$. Thus
\begin{equation*}
    \begin{aligned}
D(t)^{\top}Q(t)D(t)
        &= D(t)^{\top}Q_{\operatorname{sym}}(t)D(t) 
    \end{aligned}
\end{equation*}

Now, from the definition of $Q(t)$, we have
\begin{equation*}
    Q_{\operatorname{sym}}(t) =\int_0^1 (\nabla V)_{\operatorname{sym}}((\bar{A}(t), \bar{B}(t)) - s D(t)) \ \mathrm{d}s
\end{equation*}
and the terms in the vector $(\bar{A}(t), \bar{B}(t)) - s D(t)$ are ordered, being a convex combination of ordered vectors $(\bar{A}(t), \bar{B}(t))$ and $({A}(t), {B}(t))$. 
Write
 \begin{equation*}
     \begin{aligned}
         D(t)^{\top}Q_{\operatorname{sym}}(t)D(t) &= \sum_{j=1}^{n_1} \left[\alpha_{jj}(t)\{\bar{a}_j(t)-a_j(t)\}^2 + \beta_{jj}(t)\{\bar{b}_j(t)-b_j(t)\}^2\right] \\
         &+ 2\sum_{1\le j < k\le 2n_1} (Q_{\operatorname{sym}}(t))_{jk}D_j(t)D_k(t)
     \end{aligned}
 \end{equation*}
where $\alpha_{jj}(t)$ and $\beta_{jj}(t)$ satisfy the following.
The main diagonal computation above (item 3.) implies ($\Pi_{x_{n_1}} =$ projection onto the $x_{n_1}$ coordinate, and similarly for $\Pi_{y_{n_1}}$) 
\begin{equation*}
\begin{aligned}
        \alpha_{jj}(t)_+&\lesssim \int_0^1 \Pi_{x_{n_1}}(\bar{Z}(t) - s D(t)) + \{\Pi_{y_{n_1}}(\bar{Z}(t) - s D(t))\}^2 \  \mathrm{d}s\\
        &=  \int_0^1 \bar{a}_{n_1}(t)- s(\bar{a}_{n_1}(t) - a_{n_1}(t)) + \{\bar{b}_{n_1}(t)- s(\bar{b}_{n_1}(t) - b_{n_1}(t))\}^2\  \mathrm{d}s\\
        &\lesssim \bar{a}_{n_1}(t) +a_{n_1}(t)  + \bar{b}_{n_1}(t)^2 + b_{n_1}(t)^2
\end{aligned}
\end{equation*}
and likewise,
\begin{equation*}
     \beta_{jj}(t)_+\lesssim   \bar{a}_{n_1}(t) +a_{n_1}(t)  + \bar{b}_{n_1}(t)^2 + b_{n_1}(t)^2
\end{equation*}
 By \Cref{prop:single:stepfn} and \Cref{lem:single:stepfn:perturbed:fwd}, $\Lambda(t) :=  \bar{a}_{n_1}(t) +a_{n_1}(t)  + \bar{b}_{n_1}(t)^2 + b_{n_1}(t)^2$ is integrable. Therefore,
\begin{equation*}
    \sum_{j=1}^{n_1} \alpha_{jj}(t)\{\bar{a}_j(t)-a_j(t)\}^2 + \beta_{jj}(t)\{\bar{b}_j(t)-b_j(t)\}^2  \lesssim \Lambda(t)|D(t)|^2
\end{equation*}
 
Finally, consider 
\begin{equation*}
    \sum_{1\le j < k\le 2n_1} (Q_{\operatorname{sym}}(t))_{jk}D_j(t)D_k(t)
\end{equation*}

According to items 1. and 2. in the computation of $\nabla V$ above, off-diagonal terms in $Q_{\operatorname{sym}}(t)$ are either $0$ (due to item 1.) or bounded by (due to item 2.)
\begin{equation*}
    \lesssim \int_0^1 \Pi_{x_{n_1}}(\bar{Z}(t) - s D(t))  \  \mathrm{d}s \lesssim  \bar{a}_{n_1}(t) + a_{n_1}(t) \le \Lambda(t)
\end{equation*}

Thus

\begin{equation*}
\begin{aligned}
    \sum_{1\le j < k\le 2n_1} (Q_{\operatorname{sym}}(t))_{jk}D_j(t)D_k(t) &\lesssim \Lambda(t)|D(t)|^2
    \end{aligned}
\end{equation*}
This completes the proof of the claim \eqref{ineq:D(t):quadratic-est}.
\end{proofof}

\subsubsection{Estimate of $\bar{g}_2-g_2$}
Suppose that the endpoint trajectories of $g_2$ and $\bar g_2$ are, respectively,
\[
 0<a_1<b_1<\cdots<a_{n_2}<b_{n_2}<\frac\pi m,
 \qquad
 0<\bar a_1<\bar b_1<\cdots<\bar a_{n_2}<\bar b_{n_2}<\frac\pi m.
\]
We use the endpoint distance
\[
 \mathrm d(g_2(t),\bar g_2(t))
 :=\max_j\bigl\{|\bar a_j(t)-a_j(t)|+|\bar b_j(t)-b_j(t)|\bigr\}.
\]

\begin{lemma}
\label[lemma]{lem:g_2-barg_2}
Define
\begin{equation}
\label{eq:forcing-functional}
 \mathcal F_T(t):=\int_0^t\frac{1+T-s}{1+s}
 \left(\int_0^s\frac{M[g_2](\sigma)}{1+\sigma}\,\mathrm d\sigma\right)\mathrm ds.
\end{equation}
There are constants $C$ and $\varepsilon_0>0$, depending only on $h_1,h_2$
and $m$, with the following property. For every $0<\tau\le T$, if
$\mathcal F_T(\tau)\le\varepsilon_0$, then
\begin{equation}
\label{eq:g2-weighted-stability}
 \mathrm d(g_2(t),\bar g_2(t))
 \le \frac{C}{(1+T-t)^2}\mathcal F_T(t),\qquad 0\le t\le \tau.
\end{equation}
The same estimate holds with the left-hand side replaced by
$\|g_2(t)-\bar g_2(t)\|_{L^1}$.

In particular, if, on an interval $0\le t\le T_*\le T$,
\begin{equation}
\label{eq:g2-moment-bootstrap-lemma}
 M[g_2](t)\le\frac{2C_0}{(1+T-t)^2},
\end{equation}
then, for $T$ sufficiently large,
\begin{equation}
\label{eq:g2-bootstrap-stability}
 \mathrm d(g_2(t),\bar g_2(t))
 +\|g_2(t)-\bar g_2(t)\|_{L^1}
 \le \frac{CC_0\log^2(2+T)}{(2+T)(1+T-t)^2},\qquad 0\le t\le T_*.
\end{equation}
\end{lemma}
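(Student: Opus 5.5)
The plan is to compare $g_2$ and $\bar g_2$ \emph{forward} in time from $t=0$, where they coincide ($g_2(0)=\bar g_2(0)$ by \eqref{def:two-target:data}), through their endpoint vectors. Write $Z_2(t)$, $\bar Z_2(t)$ for the ordered endpoint vectors and $D_2=\bar Z_2-Z_2$, so $D_2(0)=0$. By \eqref{eqn:right-packet:ODE} and the equation for $\bar g_2$,
\[
Z_2'=W(Z_2)-s_m^{-1}m[g_1](t)\,\mathcal S^\flat(Z_2),\qquad \bar Z_2'=W(\bar Z_2)-s_m^{-1}m[\bar g_1](t)\,\mathcal S^\flat(\bar Z_2).
\]
Here $W$ is the self-interaction field, the analogue of $V$ from the proof of \Cref{lem:g_1-barg_1} for the coefficients of $h_2$, and $\mathcal S^\flat$ has entries $\sin(2(\frac{\pi}{m}-\cdot))$. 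Hence
\[
D_2'=Q_2(t)D_2+R(t)+\bigl(m[g_1]-m[\bar g_1]\bigr)s_m^{-1}\mathcal S^\flat(Z_2),
\]
where $Q_2$ is the averaged Jacobian of $W$, and $R$ collects the term $m[\bar g_1]\,(\mathcal S^\flat(\bar Z_2)-\mathcal S^\flat(Z_2))$, which is bounded by $C(1+t)^{-2}|D_2|$ and hence has integrable coefficient.

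\textbf{The forcing.} The forcing is the difference of $m$'s. Both $g_1$ and $\bar g_1$ are supported in $[0,C/(1+s)]$ by \eqref{ineq:g_1:rightendpoint-decay} and \Cref{prop:single:stepfn}. Comparing endpoint integrals against the weight $\sin 2\theta$ gives
\[
|m[g_1]-m[\bar g_1]|(s)\lesssim \frac{1}{1+s}\,\mathrm d(g_1,\bar g_1)(s)\lesssim \frac{1}{1+s}\int_0^s\frac{M[g_2](\sigma)}{1+\sigma}\,\mathrm d\sigma,
\]
using \Cref{lem:g_1-barg_1}. The factor $\mathcal S^\flat(Z_2)$ is $O((1+T-s)^{-1})$, since $g_2$ lives within $O((1+T-s)^{-1})$ of $\frac{\pi}{m}$. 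This holds as long as $Z_2$ stays close to $\bar Z_2$ relative to that scale, which is part of the bootstrap below.

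\textbf{The propagator.} The heart of the matter is the propagator of $D'=Q_2D$. Work in the reflected variable $\zeta=\frac{\pi}{m}-\theta$. There, forward time $t$ is backward time $\tau=T-t$ for the collapsing packet of \Cref{prop:single:stepfn:perturbed}, so the packet is \emph{expanding} in $t$ and we must accept growth. Repeat the quadratic-form analysis of \eqref{ineq:D(t):quadratic-est} in reflected coordinates. All off-diagonal entries and all diagonal entries except the one for the outermost reflected endpoint $y_n$ are bounded by $x_n+y_n^2$, which is integrable uniformly in $T$ by \eqref{eq:backward-integrable-remainder}. The remaining diagonal entry is $\approx 2c_ny_n(t)$, coming from linearizing $y'\approx c_ny^2$. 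By the sharp bound \eqref{eq:backward-sharp-endpoint}, it is at most $\frac{2}{1+T-t}+C(1+T-t)^{-1-\gamma}$.

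This yields
\[
\frac{\mathrm d}{\mathrm dt}|D_2|\le\Bigl(\frac{2}{1+T-t}+\Lambda_2(t)\Bigr)|D_2|+\text{forcing},
\]
with $\int_0^T\Lambda_2\le C$ independently of $T$. Multiplying by $(1+T-t)^2$ kills the growth. Gr\"onwall with $D_2(0)=0$ then gives
\[
(1+T-t)^2|D_2(t)|\lesssim \int_0^t (1+T-s)^2\cdot\frac{1}{1+T-s}\cdot\frac{1}{1+s}\int_0^s\frac{M[g_2]}{1+\sigma}\,\mathrm d\sigma\,\mathrm ds=\mathcal F_T(t),
\]
which is \eqref{eq:g2-weighted-stability}. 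The $L^1$ version follows as in \Cref{lem:g_1-barg_1}.

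\textbf{Closing the argument.} Nonlinear closure is a continuity argument. As long as $|D_2(t)|\le \frac{1}{10}\cdot$(the minimal gap between consecutive endpoints of $\bar g_2$ at time $t$, which is $\gtrsim(1+T-t)^{-1}$), three things hold: the convex combinations remain ordered, the weight bounds above hold, and the diagonal rate is only perturbed by $O(|D_2|)$. That last perturbation is relatively small and integrable after the weighting. The bootstrap condition needs $\mathcal F_T(t)/(1+T-t)^2\ll(1+T-t)^{-1}$, which $\mathcal F_T\le\varepsilon_0$ ensures.

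For the ``in particular'' part, insert \eqref{eq:g2-moment-bootstrap-lemma}. A direct computation gives
\[
\int_0^s\frac{\mathrm d\sigma}{(1+\sigma)(1+T-\sigma)^2}\lesssim\frac{\log(2+T)}{(1+T)^2}+\frac{1}{(1+T)(1+T-s)}.
\]
Integrating against $\frac{1+T-s}{1+s}$ then gives $\mathcal F_T(t)\lesssim C_0\log^2(2+T)/(2+T)$. This is below $\varepsilon_0$ for $T$ large, which yields \eqref{eq:g2-bootstrap-stability}.

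\textbf{Main obstacle.} I expect the hardest step to be extracting the diagonal rate with constant \emph{exactly} $2/(1+T-t)$ plus a $T$-uniformly integrable error. Any larger constant would give a loss that is polynomial in $T$. For this I would lean on \eqref{eq:backward-sharp-endpoint} together with the $x_n$-integrability in \eqref{eq:backward-integrable-remainder}.
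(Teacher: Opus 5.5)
Your proposal follows the paper's proof essentially step for step. The shared steps are: reflect to $\zeta=\frac{\pi}{m}-\theta$; split the symmetrized Jacobian of the reflected dynamics into a leading rate plus a remainder $O(x_{n_2}+y_{n_2}^2)$ that is integrable uniformly in $T$; bound the leading rate by $\frac{2}{1+T-t}+C(1+T-t)^{-1-\gamma}$ using \eqref{eq:backward-sharp-endpoint}; weight by $(1+T-t)^2$ so that Gr\"onwall yields exactly $\mathcal F_T$; close the quadratic term by continuation; and do the same computation for the ``in particular'' part.

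Two side claims are inaccurate, though neither breaks the argument.

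First, the diagonal entry for $y_{n_2}$ is not the only large one. Every diagonal entry of $-(\nabla V)_{\rm sym}$ equals $\lambda+O(x_{n_2}+y_{n_2}^2)$, with $\lambda\approx 2c_{2,n_2}(y_{n_2}-x_{n_2})$. The reason is that the outermost step expands the whole region to its left linearly at rate $\lambda$. In \Cref{lem:g_1-barg_1} only the \emph{positive parts} of these entries were small, and time reversal turns the large negative parts into growth. Because you only use an upper bound on the quadratic form, your differential inequality is unaffected.

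Second, for $n_2\ge 2$ the minimal endpoint gap of $\bar g_2$ is not $\gtrsim(1+T-t)^{-1}$. By \eqref{eq:backward-refined-endpoints}, all reflected endpoints other than $\bar y_{n_2}$ are $O((1+T-t)^{-1-\gamma})$. You do not need this gap bound anyway: $Z_2(t)$ and $\bar Z_2(t)$ are both ordered, so every point on the segment between them is ordered too. What does matter is where the bootstrap is placed. It should be on $E=(1+T-t)^2|D_2|\le 2C\varepsilon_0$. A hypothesis of the form $|D_2|\lesssim(1+T-t)^{-1}$ only makes the $C|D_2|$ correction to the rate $O((1+T-t)^{-1})$, whose integral is of order $\log T$.
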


\begin{proof}
Recall \[\partial_t g_2 + (2G_2-s_m^{-1}\sin(2({\textstyle\frac{\pi}{m}}-\theta))m[g_1](t)) \partial_\theta g_2 = 0\] and \[\partial_t \bar{g}_2 + (2\bar{G}_2 - s_m^{-1}\sin(2({\textstyle\frac{\pi}{m}}-\theta))m[\bar{g}_1](t))\partial_\theta \bar{g}_2 = 0\] 
both with data at time $0$ equal to $\bar{g}_2(0)$. To estimate the endpoint differences $|\bar{a}_j(t)-a_j(t)| + |\bar{b}_j(t)-b_j(t)|$,adapt the notation of \Cref{lem:g_1-barg_1} (see in particular \eqref{def:simplifying}) in the obvious way: i.e. $\bar{Z}(t) =(\bar{A}(t), \bar{B}(t))$ is an $n_2 + n_2$ vector with left and right endpoints of $\bar{g}_2$ and $Z(t) = (A(t), B(t))$ has the analogous meaning for $g_2$. Define in the following 
\begin{equation*}
    \mathcal{S}_{\frac{\pi}{m}}(x_1,\dots, x_{n_2}, y_1,\dots,y_{n_2}) = \mathcal{S}({\textstyle\frac{\pi}{m}}-x_1, \dots, {\textstyle\frac{\pi}{m}}-x_{n_2}, {\textstyle\frac{\pi}{m}}-y_1, \dots, {\textstyle\frac{\pi}{m}}-y_{n_2})
\end{equation*}
where $\mathcal{S}$ is as defined in \eqref{def:simplifying:sine_vec}, but $n_1$ is replaced by $n_2$.
The analog of the equations \eqref{eqn:simplified:ZZ_bar} in the current case now reads
\begin{equation}
    Z'(t) = V(Z(t)) - m[g_1](t)\mathcal{S}_{\frac{\pi}{m}}(Z(t)), \quad  \bar{Z}'(t) = V(\bar{Z}(t))- m[\bar{g}_1](t)\mathcal{S}_{\frac{\pi}{m}}(\bar{Z}(t))
\end{equation}
with $V$ (and hence $Q$) structurally the same as in \Cref{lem:g_1-barg_1} but with $n_1$ replaced by $n_2$. Recall the following estimates on $\bar{Z}(t)$ for $0 \le t \le T$:
from \eqref{bar-g_2:bounds},
\begin{equation*}
    \frac{\pi}{m}-\bar{Z}_i(t) \lesssim \frac{1}{(1+T-t)}.
\end{equation*}

This motivates performing the change of variables
\begin{equation*}
    y_j(t) = \frac{\pi}{m} - a_{n_2+1-j}(t), \quad x_j(t) = \frac{\pi}{m} - b_{n_2+1-j}(t)
\end{equation*}
and
\begin{equation*}
\bar{y}_{n_2+1-j}(t) =\frac{\pi}{m} - \bar{a}_j(t),\quad \bar{x}_{n_2+1-j}(t) = \frac{\pi}{m} - \bar{b}_j(t).
\end{equation*}
Notice carefully that we are exchanging the roles of the left and right endpoints, which gives the ordering $x_1 < y_1 < x_2 < y_2 < \dots < x_{n_2} < y_{n_2}$ and leaves the structure of $V$ unchanged. Setting 
\[X(t) = (x_1(t),\dots, x_{n_2}(t), y_1(t),\dots, y_{n_2}(t))\] and \[\bar{X}(t) = (\bar{x}_1(t),\dots, \bar{x}_{n_2}(t), \bar{y}_1(t),\dots, \bar{y}_{n_2}(t)),\] we then have
\begin{equation}
    X'(t) = -V(X(t)) + m[g_1](t)\mathcal{S}(X(t)), \quad  \bar{X}'(t) = -V(\bar{X}(t))+ m[\bar{g}_1](t)\mathcal{S}(\bar{X}(t))
\end{equation}
Set $D(t) = \bar{X}(t) - X(t)$. Thus
\begin{equation}
\label{eqn:D(t):backwards}
    D'(t) = -Q(t)D(t) + (m[\bar{g}_1](t)-m[g_1](t))\mathcal{S}(\bar{X}(t)) + m[g_1](t)(\mathcal{S}(\bar{X}(t))-\mathcal{S}(X(t)) )
\end{equation}
and
\begin{equation}
    D(0) = 0
\end{equation}
with $Q$ here structurally identical to the one in  \Cref{lem:g_1-barg_1} above. Revisit the computation of $\nabla V$. Here in an abuse of notation, $c_{2,j}$ denote coefficients after the relabeling $c_{2,n_2 + 1 - k} \mapsto c_{2,k}$, though their exact values are immaterial and should cause no confusion.

\begin{enumerate}
\item \underline{Diagonal terms in  $\nabla_y V_a$ and $\nabla_x V_b$}

As in the proof of \Cref{lem:g_1-barg_1},
\begin{equation*}
 \frac{\partial V_{a_k}}{\partial y_k}
 =-\frac{c_{2,k}}{s_m}\sin(2x_k)
   \sin(2({\textstyle\frac{\pi}{m}}-y_k)),
 \qquad
 \frac{\partial V_{b_k}}{\partial x_k}
 =\frac{c_{2,k}}{s_m}\sin(2x_k)
   \sin(2({\textstyle\frac{\pi}{m}}-y_k)).
\end{equation*}
Thus the diagonal of $\nabla_yV_a$ is the negative of the diagonal of
$\nabla_xV_b$, and these entries cancel in the symmetric part. 
\item \underline{Off-diagonal terms in each block}

The calculation in \Cref{lem:g_1-barg_1}, with $n_1$ replaced by $n_2$
and $c_{1,j}$ replaced by $c_{2,j}$, shows that every off-diagonal entry
of the four blocks $\nabla_xV_a,\nabla_yV_a,\nabla_xV_b,\nabla_yV_b$ is
bounded in absolute value by $Cx_{n_2}$. 
\item \underline{Main-diagonal terms}

For $j=k$, we have
\begin{equation*}
    \begin{aligned}
        \frac{\partial V_{a_k}}{\partial x_k} &= -\frac{1}{s_m}\cos(2({\textstyle\frac{\pi}{m}}-x_k))\sum_{j=1}^{k-1} c_{2,j}\{\cos(2y_j) - \cos(2x_j)\} \\
         & \ - \frac{1}{s_m}\cos\left(2x_k\right)\sum_{j=k}^{n_2} c_{2,j}\left\{\cos(2({\textstyle\frac{\pi}{m}}-y_j)) - \cos(2({\textstyle\frac{\pi}{m}}-x_j))\right\}\\
         &+\frac{c_{2,k}}{s_m}\sin\left(2x_k\right)\sin(2({\textstyle\frac{\pi}{m}}-x_k)) \\
    \end{aligned}
\end{equation*}
We isolate the leading contribution from the $j=n_2$ term in the sum above. This term is
\begin{equation*}
\begin{aligned}
        &-\frac{1}{s_m}\cos\left(2x_k\right)c_{2,n_2}\left\{\cos(2({\textstyle\frac{\pi}{m}}-y_{n_2})) - \cos(2({\textstyle\frac{\pi}{m}}-x_{n_2}))\right\}\\ &\simeq -c_{2,n_2}\left\{\cos(2({\textstyle\frac{\pi}{m}}-y_{n_2})) - \cos(2({\textstyle\frac{\pi}{m}}-x_{n_2}))\right\}
\end{aligned}
\end{equation*}
for small $x_k$. Define
\begin{equation*}
    \lambda := \frac{c_{2,n_2}}{s_m}\left\{\cos(2({\textstyle\frac{\pi}{m}}-y_{n_2})) - \cos(2({\textstyle\frac{\pi}{m}}-x_{n_2}))\right\}
\end{equation*}
Thus
\begin{equation*}
    \begin{aligned}
        \frac{\partial V_{a_k}}{\partial x_k} + \lambda &= -\frac{1}{s_m}\cos(2({\textstyle\frac{\pi}{m}}-x_k))\sum_{j=1}^{k-1} c_{2,j}\{\cos(2y_j) - \cos(2x_j)\} \\
         & \ - \frac{1}{s_m}\cos\left(2x_k\right)\sum_{j=k}^{n_2-1} c_{2,j}\left\{\cos(2({\textstyle\frac{\pi}{m}}-y_j)) - \cos(2({\textstyle\frac{\pi}{m}}-x_j))\right\}\\
         &+ \lambda(1-\cos(2x_k))\\
         &+\frac{c_{2,k}}{s_m}\sin\left(2x_k\right)\sin(2({\textstyle\frac{\pi}{m}}-x_k)) \\
    \end{aligned}
\end{equation*}
and hence
\begin{equation*}
    \begin{aligned}
        \left|\frac{\partial V_{a_k}}{\partial x_k} + \lambda \right| &\lesssim \sum_{j=1}^k y_j^2 + \sum_{j=k}^{n_2-1} (y_j-x_j) + y_{n_2}x_k^2+ x_k\\
        &\lesssim y_{n_2}^2 + x_{n_2}
    \end{aligned}
\end{equation*}
due to the ordering and simple inequalities such as
\begin{equation*}
    1-\cos(2x_k) \lesssim x_k^2, \quad |\lambda| \lesssim y_{n_2} -x_{n_2} < y_{n_2}.
\end{equation*}
Similarly,
\begin{equation*}
    \begin{aligned}
         \frac{\partial V_{b_k}}{\partial y_k} &= -\frac{1}{s_m}\cos(2({\textstyle\frac{\pi}{m}}-y_k))\sum_{j=1}^{k} c_{2,j}\{\cos(2y_j) - \cos(2x_j)\} \\
         &-\frac{c_{2,k}}{s_m}\sin(2({\textstyle\frac{\pi}{m}}-y_k))\sin(2y_k)\\
            & \ - \frac{1}{s_m}\cos(2y_k)\sum_{j=k+1}^{n_2} c_{2,j}\left\{\cos(2({\textstyle\frac{\pi}{m}}-y_j)) - \cos(2({\textstyle\frac{\pi}{m}}-x_j))\right\}\\ 
    \end{aligned}
\end{equation*}
and for $k < n_2$, we can argue as above to get
\begin{equation*}
    \begin{aligned}
        \left|\frac{\partial V_{b_k}}{\partial y_k} + \lambda \right| \lesssim y_{n_2}^2 + x_{n_2}
    \end{aligned}
\end{equation*}
That leaves the case $k = n_2$. We have
\begin{equation*}
    \begin{aligned}
         \frac{\partial V_{b_{n_2}}}{\partial y_{n_2}} + \lambda &= -\frac{1}{s_m}\cos(2({\textstyle\frac{\pi}{m}}-y_{n_2}))\sum_{j=1}^{n_2-1} c_{2,j}\{\cos(2y_j) - \cos(2x_j)\} \\
       & -\frac{1}{s_m}\cos(2({\textstyle\frac{\pi}{m}}-y_{n_2})) c_{2,n_2}\{\cos(2y_{n_2}) - \cos(2x_{n_2})\} \\
         &-\frac{c_{2,n_2}}{s_m}\sin(2({\textstyle\frac{\pi}{m}}-y_{n_2}))\sin(2y_{n_2})\\
        &+  \frac{c_{2,n_2}}{s_m}\left\{\cos(2({\textstyle\frac{\pi}{m}}-y_{n_2})) - \cos(2({\textstyle\frac{\pi}{m}}-x_{n_2}))\right\} \\
         &= O(x_{n_2}^2) \\
         & -\frac{c_{2,n_2}}{s_m}\cos(2({\textstyle\frac{\pi}{m}}-y_{n_2})) \{\cos(2y_{n_2}) - \cos(2x_{n_2})\} \\
         &-\frac{c_{2,n_2}}{s_m}\sin(2({\textstyle\frac{\pi}{m}}-y_{n_2}))\sin(2y_{n_2})\\
         &+ \frac{c_{2,n_2}}{s_m}\left\{\cos(2({\textstyle\frac{\pi}{m}}-y_{n_2})) - \cos(2({\textstyle\frac{\pi}{m}}-x_{n_2}))\right\} \\
         &= O(x_{n_2}^2) + H(x_{n_2}, y_{n_2})\\
    \end{aligned}
\end{equation*}
where, as is easily checked,
\begin{equation*}
 H(0,0) = 0, \quad \partial_yH(0,0) = 0.
\end{equation*}
Hence by Taylor's theorem, $|H(x_{n_2}, y_{n_2})| \lesssim x_{n_2} + y_{n_2}^2$, since $x_{n_2}^2 \le y_{n_2}^2$ due to ordering. Thus
\begin{equation*}
     \left|\frac{\partial V_{b_{n_2}}}{\partial y_{n_2}} + \lambda\right| \lesssim x_{n_2} + y_{n_2}^2
\end{equation*}
Equivalently, the preceding estimates give
\begin{equation}
    -C(x_{n_2} + y_{n_2}^2)\operatorname{Id} \le (\nabla V)_{\operatorname{sym}} + \lambda(x_{n_2},y_{n_2})\operatorname{Id} \le C(x_{n_2} + y_{n_2}^2)\operatorname{Id},
\end{equation}
i.e.
\begin{equation}
    -(\nabla V)_{\operatorname{sym}} = \lambda(x_{n_2},y_{n_2})\operatorname{Id} + R, 
\end{equation}
with $\|R\| := \max_{ij}|R_{ij}| = O(x_{n_2} + y_{n_2}^2)$.
\end{enumerate}

We next prove the analogue of \eqref{ineq:D(t):quadratic-est}.  The refined
endpoint estimates \eqref{eq:backward-refined-endpoints} and
\eqref{eq:backward-sharp-endpoint}, applied to $\bar g_2$, give
\begin{equation}
\label{eq:reference-right-refined-oldstyle}
 \bar y_{n_2}(t)\simeq \frac{1}{1+T-t},\qquad
 \bar x_{n_2}(t)\lesssim \frac{1}{(1+T-t)^{1+\gamma}},
\end{equation}
and
\begin{equation}
\label{eq:reference-right-sharp-oldstyle}
 c_{2,n_2}\bar y_{n_2}(t)
 \le \frac{1}{1+T-t}
 +\frac{C}{(1+T-t)^{1+\gamma}}
\end{equation}
for some $\gamma>0$.  The assumption required there is satisfied because
$s_m^{-1}m[\bar g_1](t)$ has bounded $L^1(0,T)$ norm, uniformly in $T$, by
\Cref{prop:single:stepfn}.

In the equation for $D$, define, exactly as in \Cref{lem:g_1-barg_1},
\begin{equation*}
 Q(t)=\int_0^1\nabla V(X(t)+sD(t))\,\mathrm ds.
\end{equation*}
The preceding computation and
\eqref{eq:reference-right-refined-oldstyle}--\eqref{eq:reference-right-sharp-oldstyle}
imply
\begin{equation}
\label{ineq:D(t):right-quadratic-est}
 -D(t)^\top Q(t)D(t)
 \le \left(\frac{2}{1+T-t}+\Lambda_0(t)+C|D(t)|\right)|D(t)|^2,
\end{equation}
where
\begin{equation*}
 \Lambda_0(t)
 =C\left(\frac{1}{(1+T-t)^{1+\gamma}}
 +\frac{1}{(1+T-t)^2}\right),
 \qquad
 \int_0^T\Lambda_0(t)\,\mathrm dt\le C.
\end{equation*}
Indeed, at $\bar X(t)$ the leading part of $-(\nabla V)_{\rm sym}$ is
$\lambda(\bar X(t))\operatorname{Id}$ and remainder is bounded by
\begin{equation*}
  C(\bar x_{n_2}+\bar y_{n_2}^2)   \le \Lambda_0(t)
\end{equation*}
by \eqref{eq:reference-right-refined-oldstyle}. Since (by the mean value theorem)
\begin{equation*}
    \lambda \le {2c_{2,n_2}}(y_{n_2} - x_{n_2}) \le 2c_{2,n_2}y_{n_2},
\end{equation*}
we have by
\eqref{eq:reference-right-sharp-oldstyle} that
\[
 \lambda(\bar X(t))
 \le 2c_{2,n_2}\bar{y}_{n_2}(t) \le\frac{2}{1+T-t}
 +\frac{C}{(1+T-t)^{1+\gamma}}
\]

Finally, $\nabla V$ is Lipschitz on the
compact ordered endpoint region, so replacing $\bar X(t)$ by any point on the
segment joining $X(t)$ to $\bar X(t)$ costs at most $C|D(t)|$.  Integrating on
that segment proves \eqref{ineq:D(t):right-quadratic-est}.

We then estimate the remaining terms in the equation for $D$.  Since
\[
 m[g_1](t)\lesssim \frac{1}{(1+t)^2},
 \qquad
 |\mathcal S(\bar X(t))|\lesssim\frac{1}{1+T-t},
\]
and $\mathcal S$ is Lipschitz, \eqref{eqn:D(t):backwards} and \eqref{ineq:D(t):right-quadratic-est} give
\begin{equation}
\label{eq:right-difference-scalar-oldstyle}
 \frac{\mathrm d}{\mathrm dt}|D(t)|
 \le \left(\frac{2}{1+T-t}+\Lambda_0(t)
 +\frac{C}{(1+t)^2}\right)|D(t)|
 +C|D(t)|^2+\frac{C}{1+T-t}
 |m[g_1](t)-m[\bar g_1](t)|.
\end{equation}

Set
\[
 E(t)=(1+T-t)^2|D(t)|.
\]
The derivative of the weight cancels the term $2(1+T-t)^{-1}|D|$ in
\eqref{eq:right-difference-scalar-oldstyle}.  Hence
\begin{equation}
\label{eq:weighted-right-difference-oldstyle}
 E'(t)\le \Lambda_1(t)E(t)
 +\frac{C}{(1+T-t)^2}E(t)^2
 +C(1+T-t)|m[g_1](t)-m[\bar g_1](t)|,
 \qquad \int_0^T\Lambda_1(t)\,\mathrm dt\le C.
\end{equation}
Moreover, as in the last part of the proof of \Cref{lem:g_1-barg_1},
\begin{equation}
\label{eq:left-moment-difference-oldstyle}
 \begin{aligned}
 |m[g_1](t)-m[\bar g_1](t)|
 &\lesssim \frac{1}{1+t}\,
 \mathrm d(g_1(t),\bar g_1(t))\\
 &\lesssim \frac{1}{1+t}
 \int_0^t\frac{M[g_2](\sigma)}{1+\sigma}\,\mathrm d\sigma.
 \end{aligned}
\end{equation}
The first inequality follows by integrating $\sin(2\theta)$ over corresponding
endpoint intervals and using that every endpoint of $g_1$ and $\bar g_1$ is
$O((1+t)^{-1})$; the second is \Cref{lem:g_1-barg_1}.

Fix $0<\tau\le T$. Since $D(0)=0$, Gr\"{o}nwall,
\eqref{eq:weighted-right-difference-oldstyle} and
\eqref{eq:left-moment-difference-oldstyle} yield, for $0\le t\le\tau$,
\begin{equation}
\label{eq:weighted-bihari-oldstyle}
 E(t)\le C\mathcal F_T(t)
 +C\int_0^t\frac{E(s)^2}{(1+T-s)^2}\,\mathrm ds.
\end{equation}
Since $\int_0^\tau(1+T-s)^{-2}\,\mathrm ds\le1$, a standard continuation
argument shows that, if $\mathcal F_T(\tau)\le\varepsilon_0$ and
$\varepsilon_0$ is sufficiently small, then
\[
 E(t)\le C\mathcal F_T(t),\qquad 0\le t\le\tau.
\]
This proves \eqref{eq:g2-weighted-stability}.  The $L^1$ estimate follows
exactly as in \Cref{lem:g_1-barg_1}, because corresponding step functions have
the same heights and topology.

It remains to verify the last assertion.  Under
\eqref{eq:g2-moment-bootstrap-lemma}, since
\begin{equation}
\label{eq:inner-integral-bound-oldstyle}
 \int_0^s\frac{\mathrm d\sigma}
 {(1+\sigma)(1+T-\sigma)^2}
 \le C\left(\frac{\log (2+T)}{(2+T)^2}
 +\frac{1}{(2+T)(1+T-s)}\right),
\end{equation}
we have
\begin{align*}
 \mathcal F_T(T_*)
 &\le CC_0\int_0^{T_*}\frac{1+T-s}{1+s}
 \left(\frac{\log (2+T)}{(2+T)^2}
 +\frac{1}{(2+T)(1+T-s)}\right)\mathrm ds\\
 &\le CC_0\int_0^T\frac{1+T-s}{1+s}
 \left(\frac{\log (2+T)}{(2+T)^2}
 +\frac{1}{(2+T)(1+T-s)}\right)\mathrm ds\\
 &\le CC_0\frac{\log^2(2+T)}{2+T}.
\end{align*}
For $T$ sufficiently large this is smaller than $\varepsilon_0$. Applying the local estimate above with $\tau=T_*$ gives
\eqref{eq:g2-bootstrap-stability}.
\end{proof}

\section{Construction of $g_0$}
In this section, we shall construct $g_0$ and hence prove \Cref{thm:main}. 
\subsection{Finitely many targets}
The following statement extends \Cref{lem:insertion} to the case of $n$ target step functions. 
\begin{lemma}
\label[lemma]{lem:n-targets}
Let $h_1,h_2,\dots$ be a finite or infinite sequence of step functions supported in $(0,\frac{\pi}{m})$ and satisfying $0\le h_j\le1$. Let $\ve_j,\delta_j>0$. Then there exist times
\[
0=t_1<t_2<\cdots
\]
and solutions $g^{(n)}$ of \eqref{eqn:Euler:reduced:transport}-\eqref{eqn:Euler:reduced:BSlaw} with the following properties.
\begin{enumerate}
    \item For each $n$,
    \[
        g^{(n)}(0)=\sum_{j=1}^n \bar g_j(0),
    \]
    where the $\bar g_j(0)$ are step functions with disjoint and ordered supports. In particular, writing $g_j^{(n)}(t)$ for their transports under the flow of $g^{(n)}$,
    \[
        g^{(n)}(t,\theta)=\sum_{j=1}^n g_j^{(n)}(t,\theta),
        \qquad
        \sup\operatorname{supp}g_j^{(n)}(t)<\inf\operatorname{supp}g_{j+1}^{(n)}(t).
    \]
    \item
    \[
        \|g^{(n)}(t_n)-h_n\|_{L^1}\le \ve_n.
    \]
    \item 
    \begin{equation}
    \label{eqn:recursive-stage-closeness}
        \sup_{0\le t\le t_n}\|g^{(n+1)}(t)-g^{(n)}(t)\|_{L^1}\le \delta_n.
    \end{equation}
\end{enumerate}
Consequently, for $1\le j\le n$,
\begin{equation}
\label{eqn:recursive-target-error}
    \|g^{(n)}(t_j)-h_j\|_{L^1}
    \le \ve_j+\sum_{k=j}^{n-1}\delta_k.
\end{equation}
\end{lemma}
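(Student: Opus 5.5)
We build the solutions by induction on $n$, using the Extension Lemma (\Cref{lem:insertion}) in its time-translated form at each stage. At stage $n$ the configuration $g^{(n)}$ plays the role of $g_1$, and the new target $h_{n+1}$ plays the role of $h_2$. For $n=1$ we take $t_1=0$ and $g^{(1)}$ the solution with datum $\bar g_1(0):=h_1$, so properties 1 and 2 hold trivially. Suppose $g^{(n)}$ and $t_1<\dots<t_n$ have been built, with $g^{(n)}(0)=\sum_{j\le n}\bar g_j(0)$ a step function supported in $(0,\frac{\pi}{m})$.

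\emph{Inductive step.} We apply the two-target construction with $h_1$ replaced by the whole step function $g^{(n)}(0)$ (it is an ordered step function with heights in $(0,1]$, so all earlier results apply to it with $n_1$ equal to its total number of steps), with $h_2$ replaced by $h_{n+1}$, and with horizon $T=t_{n+1}$. We choose $\bar g_{n+1}(0):=\bar g_2(0)$ as in \eqref{def:two-target:data} and set $g^{(n+1)}(0):=g^{(n)}(0)+\bar g_{n+1}(0)$. By \eqref{bar-g_2:bounds}, $\operatorname{supp}\bar g_{n+1}(0)\subset[\frac{\pi}{m}-\frac{C}{1+T},\frac{\pi}{m})$. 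So for $T$ large this support lies strictly to the right of $\operatorname{supp} g^{(n)}(0)$, which gives the disjoint ordered supports of property 1; the ordering persists in time since characteristics do not cross. In the notation of \Cref{lem:insertion}, $g_1=\sum_{j\le n}g^{(n+1)}_j$ and $\bar g_1=g^{(n)}$, since $\bar g_1$ solves the unperturbed system with datum $g^{(n)}(0)$.

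\emph{Verifying properties 2 and 3.} Property 4 of \Cref{lem:insertion} gives $\|g^{(n+1)}(t_{n+1})-h_{n+1}\|_{L^1}\le\ve_{n+1}$ once $T\ge T_0(g^{(n)}(0),h_{n+1},\ve_{n+1})$. For property 3 we use \eqref{ineq:closeness-of-gs}:
\[
\sup_{0\le t\le t_n}\|g^{(n+1)}(t)-g^{(n)}(t)\|_{L^1}\le \sup_{t\le t_n}\|g_1(t)-\bar g_1(t)\|_{L^1}+\sup_{t\le t_n}\|g_2(t)\|_{L^1}.
\]
The first term is $\lesssim \log^2(1+T)/(1+T)$. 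For the second, item 2 of \Cref{lem:insertion} gives $\|g_2(t)\|_{L^1}\lesssim (1+T-t_n)^{-1}$ for $t\le t_n$. Both tend to $0$ as $T\to\infty$ with $t_n$ fixed, since the implicit constants depend only on $g^{(n)}(0)$, $h_{n+1}$ and $m$. Hence there is $t_{n+1}>t_n$ large enough that the sum is $\le\delta_n$ and all the previous conditions hold. This step requires $T$ to exceed both $T_0$ and a threshold depending on $t_n$; this is the only place where the choices interact. It is harmless because at stage $n$ everything earlier is frozen, and the lemma allows any $T\ge T_0$.

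\emph{Consequence.} The bound \eqref{eqn:recursive-target-error} follows from the telescoping sum
\[
g^{(n)}(t_j)-h_j=\bigl(g^{(j)}(t_j)-h_j\bigr)+\sum_{k=j}^{n-1}\bigl(g^{(k+1)}(t_j)-g^{(k)}(t_j)\bigr),
\]
together with property 2 and the fact that $t_j\le t_k$ for $k\ge j$, so that \eqref{eqn:recursive-stage-closeness} applies at time $t_j$.

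The main point to check is that \Cref{lem:insertion} legitimately applies with $h_1=g^{(n)}(0)$. Its constants then depend on the ever-growing datum $g^{(n)}(0)$. This causes no difficulty, because each stage only needs finitely many conditions on the single new parameter $t_{n+1}$, and no uniformity in $n$ is required.
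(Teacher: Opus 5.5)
Your proof is correct and follows the paper's argument essentially step for step: induct by applying the Extension Lemma with $h_1=g^{(n)}(0)$, $h_2=h_{n+1}$ and horizon $T=t_{n+1}$, identify $\bar g_1=g^{(n)}$ and $\bar g_{n+1}(0)=\bar g_2(0)$, bound the stage difference on $[0,t_n]$ by $\log^2(1+t_{n+1})/(1+t_{n+1})+(1+t_{n+1}-t_n)^{-1}$, enlarge $t_{n+1}$, and telescope. One cosmetic point: like the paper, you apply the lemma on $[0,t_{n+1}]$ starting from time $0$, so the mention of its time-translated form is unnecessary.
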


\begin{proof}
Set $t_1=0$, $g^{(1)}(0)=h_1$, and $\bar g_1(0)=h_1$. Suppose that $g^{(n)}$ and $t_1<\cdots<t_n$ have been constructed. We apply \Cref{lem:insertion} with $h_1=g^{(n)}(0)$ and $h_2=h_{n+1}$, and write the resulting solution as
\[
    g^{(n+1)}=g_1+g_2.
\]
In the notation of \Cref{lem:insertion}, $\bar g_1=g^{(n)}$. By taking $t_{n+1}$ sufficiently large, we have
\[
    \|g^{(n+1)}(t_{n+1})-h_{n+1}\|_{L^1}\le \ve_{n+1}.
\]
Moreover, for $0\le t\le t_n$, the estimates from  \Cref{lem:insertion} give
\begin{align*}
    \|g^{(n+1)}(t)-g^{(n)}(t)\|_{L^1}
    &\le \|g_1(t)-g^{(n)}(t)\|_{L^1}+\|g_2(t)\|_{L^1}\\
    &\lesssim \frac{\log^2(1+t_{n+1})}{1+t_{n+1}}
       +\frac{1}{1+t_{n+1}-t}\\
    &\lesssim \frac{\log^2(1+t_{n+1})}{1+t_{n+1}}
       +\frac{1}{1+t_{n+1}-t_n}.
\end{align*}
Thus, increasing $t_{n+1}$ if necessary, we obtain \eqref{eqn:recursive-stage-closeness} and may also impose any prescribed lower bound on $t_{n+1}$. Since $g_1(0)=g^{(n)}(0)$ and $g_2(0)=\bar g_2(0)$ in the construction of \Cref{lem:insertion}, setting $\bar g_{n+1}(0):=g_2(0)$ gives
\[
    g^{(n+1)}(0)=g^{(n)}(0)+\bar g_{n+1}(0),
\]
with $\bar g_{n+1}(0)$ lying strictly to the right of $g^{(n)}(0)$. This proves that
    \[
        g^{(n)}(0)=\sum_{j=1}^n \bar g_j(0).
    \]

Finally, for $1\le j\le n$, telescoping and \eqref{eqn:recursive-stage-closeness} give
\begin{align*}
    \|g^{(n)}(t_j)-h_j\|_{L^1}
    &\le \|g^{(j)}(t_j)-h_j\|_{L^1}
      +\sum_{k=j}^{n-1}\|g^{(k+1)}(t_j)-g^{(k)}(t_j)\|_{L^1}\\
    &\le \ve_j+\sum_{k=j}^{n-1}\delta_k,
\end{align*}
which proves \eqref{eqn:recursive-target-error}.
\end{proof}

\subsection{Infinitely many targets}

\begin{lemma}
\label[lemma]{lem:infty-targets}
Let $h_1,h_2,\dots$ be a sequence of step functions supported in $(0,\frac{\pi}{m})$ and satisfying $0\le h_j\le1$. There exist times $t_1<t_2<\cdots$ tending to infinity and a solution $g$ of \eqref{eqn:Euler:reduced:transport}-\eqref{eqn:Euler:reduced:BSlaw} such that
\[
 \|g(t_j)-h_j\|_{L^1}\le 4\cdot2^{-j},\qquad j=1,2,\dots.
\]
Moreover, the initial datum is a sum of step functions with disjoint ordered supports,
\[
 g(0,\theta)=\sum_{j=1}^\infty {\bar{g}}_j(\theta),
 \qquad \sup\operatorname{supp}{\bar{g}}_j<\inf\operatorname{supp}{\bar{g}}_{j+1},
 \qquad 0\le g(0,\theta)\le1,
\]
and, writing $g_j(t)$ for the transport of ${\bar{g}}_j(0)$ by the flow of $g$, one has
\[
 g(t,\theta)=\sum_{j=1}^\infty g_j(t,\theta)
\]
with disjoint ordered supports for every $t\ge0$.
\end{lemma}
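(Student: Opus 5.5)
We will apply \Cref{lem:n-targets} to the infinite sequence $h_1,h_2,\dots$ with $\ve_j=2^{-j}$ and $\delta_j=2^{-j}$. We impose the lower bounds $t_{j+1}\ge t_j+1$, so that $t_j\to\infty$. We also take each $t_{n+1}$ large enough that $\|\bar g_{n+1}(0)\|_{L^1}\le 2^{-n}$ and that $\operatorname{supp}\bar g_{n+1}(0)$ lies in $[\frac{\pi}{m}-2^{-n},\frac{\pi}{m})$. This is allowed by \eqref{g2:init} and the support bound from \Cref{lem:insertion}, since these quantities are $O((1+t_{n+1}-t_n)^{-1})$ with the time shift. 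The initial datum is then defined as $g_0:=\sum_{j\ge1}\bar g_j(0)$. The supports are disjoint and ordered, and they accumulate only at $\frac{\pi}{m}$. Since each $\bar g_j(0)$ takes values in $[0,1]$ and the supports are disjoint, $0\le g_0\le1$. Moreover $\|g_0-g^{(n)}(0)\|_{L^1}\le\sum_{j>n}2^{-j+1}\to0$.

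Let $g$ be the Elgindi--Jeong solution with datum $g_0$, extended oddly and $m$-fold. The key step is to show that $g^{(n)}(t)\to g(t)$ in $L^1$ for each fixed $t$. For this I would use the stability of the system in $L^1$ on bounded time intervals for data bounded in $L^\infty$. The velocity $G$ is Lipschitz uniformly, with $\|\partial_\theta G\|_{L^\infty}\lesssim\|g\|_{L^\infty}\le1$, because $G$ solves a second-order ODE with bounded right-hand side. Hence flow maps of different data differ in sup norm by at most $e^{Ct}\int_0^t\|g^{(n)}-g\|_{L^1}$. Combined with a Yudovich-type estimate, this gives a bound $\|g^{(n)}(t)-g(t)\|_{L^1}\le C(t)\|g^{(n)}(0)-g_0\|_{L^1}^{\alpha}$.

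An alternative route avoids quantitative stability. By \eqref{eqn:recursive-stage-closeness}, the sequence $g^{(n)}$ is Cauchy in $C([0,t_N];L^1)$ for every $N$, since $t_n\ge t_N$ for $n\ge N$. Let $\tilde g$ be its limit. The $g^{(n)}$ are uniformly bounded by $1$, so $G^{(n)}\to\tilde G$ in $C^1$, and the transport equation passes to the limit in the weak (renormalized) sense. By the uniqueness of Elgindi--Jeong, $\tilde g=g$. I regard this identification of the limit as the main technical point, and I prefer the Cauchy route because it uses only results already stated.

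Given this convergence, the target estimate follows by passing to the limit $n\to\infty$ in \eqref{eqn:recursive-target-error}:
\[
\|g(t_j)-h_j\|_{L^1}\le \ve_j+\sum_{k\ge j}\delta_k\le 2^{-j}+2^{1-j}\le 4\cdot2^{-j}.
\]

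For the structural claims, let $g_j(t)$ be the transport of $\bar g_j(0)$ by the flow of $g$. Since $G$ is Lipschitz, the flow is a homeomorphism of $[0,\frac{\pi}{m}]$ that fixes the endpoints. It therefore preserves the disjointness and the order of the supports, and it preserves the decomposition $g(t)=\sum_j g_j(t)$ pointwise almost everywhere. One detail must be checked: the supports of the $\bar g_j(0)$ must stay strictly separated with gaps. This holds because each one is a finite step function, and each insertion in \Cref{lem:insertion} places the new packet strictly to the right.
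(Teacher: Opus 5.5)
Your proposal is correct and follows essentially the same route as the paper: apply \Cref{lem:n-targets} with $\varepsilon_n=\delta_n=2^{-n}$ and take the limit of the Cauchy sequence $g^{(n)}$ in $C([0,T];L^1)$. You then pass to the limit in the weak formulation using $G^{(n)}\to G$ in $C^1$, get the structural claims from the order-preserving characteristic flow, and let $n\to\infty$ in the telescoped error bound, exactly as the paper does. Your extra size and support conditions on $\bar g_{n+1}(0)$ are redundant, since the $t=0$ case of \eqref{eqn:recursive-stage-closeness} already gives $\|\bar g_{n+1}(0)\|_{L^1}\le 2^{-n}$; your explicit appeal to Elgindi--Jeong uniqueness to identify the limit is a harmless refinement of a point the paper leaves implicit.
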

\begin{proof}
Apply \Cref{lem:n-targets} with
\[
    \ve_n=2^{-n},\qquad \delta_n=2^{-n}.
\]
From the proof of \Cref{lem:n-targets}, we obtain times $t_n\to\infty$ and solutions $g^{(n)}$ such that
\begin{equation}
\label{eqn:infinite-stage-estimates}
    \|g^{(n)}(t_n)-h_n\|_{L^1}\le2^{-n},
    \qquad
    \sup_{0\le t\le t_n}\|g^{(n+1)}(t)-g^{(n)}(t)\|_{L^1}\le2^{-n},
\end{equation}
and
\[
    g^{(n)}(0)=\sum_{j=1}^n\bar g_j(0),
    \qquad
    \sup\operatorname{supp}\bar g_j(0)<\inf\operatorname{supp}\bar g_{j+1}(0).
\]
It follows from \eqref{eqn:infinite-stage-estimates} that $g^{(n)}$ is Cauchy in $C([0,T];L^1)$ for every $T>0$. Let $g$ denote the limit. At time $0$,
\[
    g_0(\theta)=\sum_{j=1}^\infty \bar g_j(0,\theta),
\]
where the summands have disjoint and ordered supports, and $0\le g_0\le1$.

For the corresponding velocities,
\[
    G^{(n)}(t,\theta)=\int_0^{\frac{\pi}{m}}K_m(\theta,\zeta)g^{(n)}(t,\zeta)\,\mathrm d\zeta,
\]
we have $G^{(n)}\to G$ in $C([0,T];C^1)$ for every $T>0$. Passing to the limit in the weak formulation shows that $g$ solves \eqref{eqn:Euler:reduced:transport}-\eqref{eqn:Euler:reduced:BSlaw} with initial data $g_0$.

Let $\Phi_t$ denote the characteristic flow of $g$. If $g_j$ is the transport of $\bar g_j(0)$ by this flow, then the order-preserving property of $\Phi_t$ gives
\[
    g(t,\theta)=\sum_{j=1}^\infty g_j(t,\theta),
    \qquad
    \sup\operatorname{supp}g_j(t)<\inf\operatorname{supp}g_{j+1}(t),
\]
for every $t\ge0$. In particular, $0\le g\le1$.

Finally, for every $j$,
\begin{align*}
    \|g(t_j)-h_j\|_{L^1}
    &\le \|g^{(j)}(t_j)-h_j\|_{L^1}
       +\|g(t_j)-g^{(j)}(t_j)\|_{L^1}\\
    &\le 2^{-j}+\sum_{k=j}^\infty2^{-k}
     \le 4\cdot2^{-j}.
\end{align*}
This concludes the proof.
\end{proof}

\begin{figure}[p]
\centering
\begin{tikzpicture}[x=10.8cm,y=1.18cm,>=Latex]

  \node at (0.42,0.88)
  {Initial stack: distinct step functions packed near $\frac{\pi}{m}$};

  \draw[->] (0,-1.45) -- (0.85,-1.45) node[right] {$\theta$};
  \node[left] at (0,-1.45) {$t=t_1$};

  \draw[fill=blue!12, draw=blue, thick]
    (0.16,-1.45) -- (0.16,-1.34) -- (0.21,-1.34)
    -- (0.21,-1.23) -- (0.27,-1.23)
    -- (0.27,-1.12) -- (0.33,-1.12)
    -- (0.33,-1.01) -- (0.39,-1.01)
    -- (0.39,-1.45) -- cycle;
  \node[blue!70!black] at (0.275,-0.73) {$h_1$};

  \draw[fill=orange!14, draw=orange!85!black, thick]
    (0.72,-1.45) -- (0.72,-1.40) -- (0.732,-1.40)
    -- (0.732,-1.34) -- (0.744,-1.34)
    -- (0.744,-1.27) -- (0.756,-1.27)
    -- (0.756,-1.33) -- (0.768,-1.33)
    -- (0.768,-1.39) -- (0.780,-1.39)
    -- (0.780,-1.45) -- cycle;

  \draw[fill=purple!14, draw=purple!80!black, thick]
    (0.783,-1.45) -- (0.783,-1.38) -- (0.788,-1.38)
    -- (0.788,-1.43) -- (0.792,-1.43)
    -- (0.792,-1.35) -- (0.796,-1.35)
    -- (0.796,-1.41) -- (0.800,-1.41)
    -- (0.800,-1.45) -- cycle;

  \draw[->] (0,-2.90) -- (0.85,-2.90) node[right] {$\theta$};
  \node[left] at (0,-2.90) {$t_1<s<t_2$};

  \draw[fill=blue!8, draw=blue, thick]
    (0.04,-2.90) -- (0.04,-2.86) -- (0.06,-2.86)
    -- (0.06,-2.81) -- (0.082,-2.81)
    -- (0.082,-2.76) -- (0.102,-2.76)
    -- (0.102,-2.72) -- (0.118,-2.72)
    -- (0.118,-2.90) -- cycle;
  \node[blue!70!black, font=\scriptsize] at (0.15,-2.58)
    {$h_1$ drifting toward $0$};

  \draw[fill=orange!14, draw=orange!85!black, thick]
    (0.735,-2.90) -- (0.735,-2.85) -- (0.746,-2.85)
    -- (0.746,-2.80) -- (0.757,-2.80)
    -- (0.757,-2.74) -- (0.768,-2.74)
    -- (0.768,-2.79) -- (0.778,-2.79)
    -- (0.778,-2.84) -- (0.787,-2.84)
    -- (0.787,-2.90) -- cycle;

  \draw[fill=purple!14, draw=purple!80!black, thick]
    (0.790,-2.90) -- (0.790,-2.84) -- (0.794,-2.84)
    -- (0.794,-2.88) -- (0.798,-2.88)
    -- (0.798,-2.82) -- (0.801,-2.82)
    -- (0.801,-2.87) -- (0.804,-2.87)
    -- (0.804,-2.90) -- cycle;

  \draw[->] (0,-4.35) -- (0.85,-4.35) node[right] {$\theta$};
  \node[left] at (0,-4.35) {$t=t_2$};

  \draw[fill=blue!6, draw=blue, thick]
    (0.028,-4.35) -- (0.028,-4.32) -- (0.045,-4.32)
    -- (0.045,-4.29) -- (0.061,-4.29)
    -- (0.061,-4.255) -- (0.075,-4.255)
    -- (0.075,-4.225) -- (0.086,-4.225)
    -- (0.086,-4.35) -- cycle;

  \draw[fill=orange!14, draw=orange!85!black, thick]
    (0.20,-4.35) -- (0.20,-4.27) -- (0.27,-4.27)
    -- (0.27,-4.16) -- (0.34,-4.16)
    -- (0.34,-4.02) -- (0.41,-4.02)
    -- (0.41,-3.90) -- (0.48,-3.90)
    -- (0.48,-4.03) -- (0.55,-4.03)
    -- (0.55,-4.16) -- (0.61,-4.16)
    -- (0.61,-4.27) -- (0.67,-4.27)
    -- (0.67,-4.35) -- cycle;
  \node[orange!85!black] at (0.44,-3.58) {$h_2$};

  \draw[fill=purple!14, draw=purple!80!black, thick]
    (0.792,-4.35) -- (0.792,-4.28) -- (0.796,-4.28)
    -- (0.796,-4.33) -- (0.800,-4.33)
    -- (0.800,-4.25) -- (0.804,-4.25)
    -- (0.804,-4.31) -- (0.808,-4.31)
    -- (0.808,-4.35) -- cycle;

  \draw[->] (0,-5.80) -- (0.85,-5.80) node[right] {$\theta$};
  \node[left] at (0,-5.80) {$t_2<s<t_3$};

  \draw[fill=blue!5, draw=blue, thick]
    (0.018,-5.80) -- (0.018,-5.78) -- (0.030,-5.78)
    -- (0.030,-5.76) -- (0.041,-5.76)
    -- (0.041,-5.735) -- (0.050,-5.735)
    -- (0.050,-5.715) -- (0.058,-5.715)
    -- (0.058,-5.80) -- cycle;

  \draw[fill=orange!8, draw=orange!85!black, thick]
    (0.055,-5.80) -- (0.055,-5.75) -- (0.082,-5.75)
    -- (0.082,-5.70) -- (0.110,-5.70)
    -- (0.110,-5.64) -- (0.136,-5.64)
    -- (0.136,-5.70) -- (0.162,-5.70)
    -- (0.162,-5.75) -- (0.186,-5.75)
    -- (0.186,-5.80) -- cycle;

  \node[orange!85!black, font=\scriptsize] at (0.22,-5.47)
    {earlier targets drift left and shrink};

  \draw[fill=purple!14, draw=purple!80!black, thick]
    (0.800,-5.80) -- (0.800,-5.74) -- (0.804,-5.74)
    -- (0.804,-5.79) -- (0.808,-5.79)
    -- (0.808,-5.72) -- (0.812,-5.72)
    -- (0.812,-5.77) -- (0.816,-5.77)
    -- (0.816,-5.80) -- cycle;

  \draw[->] (0,-7.25) -- (0.85,-7.25) node[right] {$\theta$};
  \node[left] at (0,-7.25) {$t=t_3$};

  \draw[fill=blue!5, draw=blue, thick]
    (0.014,-7.25) -- (0.014,-7.233) -- (0.023,-7.233)
    -- (0.023,-7.215) -- (0.031,-7.215)
    -- (0.031,-7.195) -- (0.038,-7.195)
    -- (0.038,-7.178) -- (0.044,-7.178)
    -- (0.044,-7.25) -- cycle;

  \draw[fill=orange!6, draw=orange!85!black, thick]
    (0.040,-7.25) -- (0.040,-7.215) -- (0.058,-7.215)
    -- (0.058,-7.180) -- (0.076,-7.180)
    -- (0.076,-7.142) -- (0.093,-7.142)
    -- (0.093,-7.180) -- (0.110,-7.180)
    -- (0.110,-7.215) -- (0.126,-7.215)
    -- (0.126,-7.25) -- cycle;

  \draw[fill=purple!14, draw=purple!80!black, thick]
    (0.14,-7.25) -- (0.14,-7.13) -- (0.21,-7.13)
    -- (0.21,-7.21) -- (0.28,-7.21)
    -- (0.28,-7.05) -- (0.36,-7.05)
    -- (0.36,-7.17) -- (0.44,-7.17)
    -- (0.44,-7.09) -- (0.52,-7.09)
    -- (0.52,-7.25) -- cycle;
  \node[purple!80!black] at (0.34,-6.56) {$h_3$};

  \draw[->] (0,-8.70) -- (0.85,-8.70) node[right] {$\theta$};
  \node[left] at (0,-8.70) {$\cdots$};
  \node at (0.42,-8.28)
  {At each stage, the stack unwinds and ``realizes" a new target $h_j$.};

  \draw[->, thick]
    (0.667,-0.08) .. controls (0.60,-0.72) and (0.50,-1.08) .. (0.41,-1.34);

  \draw[->, thick]
    (0.748,-2.98) .. controls (0.69,-3.60) and (0.61,-3.95) .. (0.56,-4.17);

  \draw[->, thick]
    (0.807,-5.88) .. controls (0.71,-6.45) and (0.60,-6.84) .. (0.50,-7.08);

  \draw[->, thick]
    (0.21,-1.54) .. controls (0.16,-2.02) and (0.12,-2.35) .. (0.10,-2.69);

  \draw[->, thick]
    (0.44,-4.46) .. controls (0.30,-5.00) and (0.21,-5.25) .. (0.16,-5.57);
\end{tikzpicture}
\caption{A  schematic for the infinite-target mechanism in Section~4. Initially one prepares a stack of step functions accumulating near $\frac{\pi}{m}$. As time advances, one step function after another unwinds into the bulk and realizes (approximately) the next target, while the step functions intended for later times remain compressed near the right endpoint.}
\label{fig:recursive-construction-schematic}
\end{figure}

\subsection{Proof of the main theorem}
We are now poised to prove \Cref{thm:main}. It is enough to prove the following theorem on $[0,\frac{\pi}{m}]$. Indeed, on uniformly bounded subsets of $L^\infty$, $L^1$ convergence implies weak$^*$ convergence. We may then extend the solution to $\Ss^1$ by odd symmetry and $m$-fold symmetry. The symmetry, sign and $L^\infty$ conditions are preserved by the flow and weak$^*$ closed, which gives the reverse inclusion in \Cref{thm:main}. 
\begin{theorem}Let $m \ge 4$. There exists initial data $g_0 \in L^\infty([0, \frac{\pi}{m}])$, such that the $L^1([0, \frac{\pi}{m}])$ $\omega_+$-limit set of $g_0$ is
    \begin{equation*}
       \omega_+(g_0) = \{h \in L^\infty([0, {\textstyle\frac{\pi}{m}}]): 0 \le h \le \|g_0\|_{L^\infty}\}
    \end{equation*}
\end{theorem}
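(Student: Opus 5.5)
We apply \Cref{lem:infty-targets} to a suitably chosen countable family of step functions. Let $\mathcal D$ be the countable set of step functions $h=\sum_k c_k\mathbf 1_{(a_k,b_k]}$ with rational endpoints $0<a_1<b_1<\dots<a_n<b_n<\frac{\pi}{m}$ and rational heights $c_k\in(0,1]$. This set is dense in $\{h\in L^\infty([0,\frac{\pi}{m}]):0\le h\le 1\}$ for the $L^1$ topology: approximate $h$ in $L^1$ by a continuous function, then by a simple function on rational intervals, and clip the heights to $(0,1]$ (dropping zero pieces). Enumerate $\mathcal D$ as $(d_i)$ and form a sequence $(h_j)$ in which every $d_i$ appears infinitely often, for instance $d_1,d_1,d_2,d_1,d_2,d_3,\dots$. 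We also insert $h=1$ on a single interval, or a target whose sup is $1$, at least once, so that $\|g_0\|_{L^\infty}=1$. \Cref{lem:infty-targets} then gives a solution $g$ with $0\le g_0\le 1$ and times $t_j\to\infty$ with $\|g(t_j)-h_j\|_{L^1}\le 4\cdot 2^{-j}$.

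\emph{Inclusion $\supseteq$.} First we check $\|g_0\|_{L^\infty}=1$. The lemma gives $g_0\le 1$. Conversely, $g(t)$ is a transport of $g_0$, so $\|g(t)\|_{L^\infty}=\|g_0\|_{L^\infty}$. Since $g(t_j)\to h_j$ in $L^1$ along the subsequence where $h_j$ equals a fixed target with sup $1$, and the essential supremum is lower semicontinuous under $L^1$ convergence (pass to an a.e.\ subsequence), we get $\|g_0\|_{L^\infty}\ge 1$. Alternatively, one notes directly that the first packet $\bar g_1=h_1$ attains height $1$.

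Now take $h$ in the target set and $\eta>0$. Choose $d_i$ with $\|d_i-h\|_{L^1}<\eta$. Because $d_i=h_j$ for infinitely many $j$, we obtain arbitrarily large $t_j$ with $\|g(t_j)-h\|_{L^1}<\eta+4\cdot2^{-j}$. A diagonal argument then yields $t_{j_k}\to\infty$ with $g(t_{j_k})\to h$ in $L^1$, so $h\in\omega_+(g_0)$.

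\emph{Inclusion $\subseteq$.} If $g(s_k)\to h$ in $L^1$ with $s_k\to\infty$, pass to an a.e.\ convergent subsequence. Since $0\le g(s_k)\le 1=\|g_0\|_{L^\infty}$ pointwise (transport preserves the range of the data), the limit satisfies $0\le h\le\|g_0\|_{L^\infty}$ a.e.

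\textbf{Main obstacle.} The delicate point is that $\|g_0\|_{L^\infty}$ must be exactly the bound $1$ used in the construction; otherwise the target set could be strictly larger than what we reach. We handle this by choosing heights in $(0,1]$ and including a height-$1$ target, so that the transported data actually achieves sup $1$. A second detail is that \Cref{lem:infty-targets} requires the targets to be supported in the open interval. This is harmless, because such step functions are still $L^1$-dense among all $h$ with $0\le h\le 1$.
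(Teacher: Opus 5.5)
Your proof is correct and follows the paper's route: apply \Cref{lem:infty-targets} to an enumeration of rational step functions with heights in $(0,1]$, get $\|g_0\|_{L^\infty}=1$ from a height-one packet, and get the reverse inclusion because transport preserves $0\le g\le 1$. The one difference is minor: you make each target recur infinitely often to obtain approximating times tending to infinity, whereas the paper gets approximating indices $n_k\to\infty$ from the fact that the dense family has no isolated points.
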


\begin{proof}
Let
\[
S=\{f\in L^\infty([0,{\textstyle\frac{\pi}{m}}]): 0 \le f\le1\}.
\]
The countable set of step functions
\[
\sum_{k=1}^n c_k\mathbf 1_{(a_k,b_k]}(\theta),
\]
where
\[
0<a_1<b_1<\cdots<a_n<b_n<\frac{\pi}{m},
\qquad a_k,b_k,c_k\in\mathbb Q,\qquad 0<c_k\le1,
\]
is dense in $S$ in $L^1$. Let $h_1,h_2,\dots$ be an enumeration of this set. By reordering if necessary, we may assume that
\[
h_1=\mathbf 1_{(a,b]}
\]
for some rational $0<a<b<\frac{\pi}{m}$. Notice also that this countable dense set has no isolated points in $L^1$; for example, one may perturb a rational coefficient or endpoint by an arbitrarily small rational amount. Thus, for every $f\in S$, there is a subsequence $h_{n_k}$ with $n_k\to\infty$ and $h_{n_k}\to f$ in $L^1$.

By \Cref{lem:infty-targets}, there exists a solution $g$ and times $t_1<t_2<\cdots$ tending to infinity such that
\[
\|g(t_j)-h_j\|_{L^1}\le4\cdot2^{-j}.
\]
Moreover, by the construction in \Cref{lem:n-targets}, $\bar g_1(0)=h_1$. Hence $0\le g_0\le1$ and
\[
\|g_0\|_{L^\infty}=1.
\]

Let $f\in S$. Then
\begin{align*}
\|g(t_{n_k})-f\|_{L^1}
&\le \|g(t_{n_k})-h_{n_k}\|_{L^1}+\|h_{n_k}-f\|_{L^1}\\
&\le4\cdot2^{-n_k}+\|h_{n_k}-f\|_{L^1}\longrightarrow0.
\end{align*}
Thus $S\subset\omega_+(g_0)$. Since transport preserves $0\le g(t)\le1$ and $S$ is closed in $L^1$, we also have $\omega_+(g_0)\subset S$. This concludes the proof.
\end{proof}

\section*{Acknowledgments and AI tool disclosure}
The idea for \Cref{thm:main} emerged from discussions with Tarek Elgindi. The author is grateful to him for helpful discussions and for comments on an earlier draft of this paper. ChatGPT was used for proofreading and to help identify and correct technical inaccuracies in an initial draft of this paper. The figures in this paper were also generated with its assistance. The core proof idea and architecture were developed by the author. Outside of the tool use described, this paper is human generated.

{\small
\bibliographystyle{plain}
\bibliography{refs.bib}
}

\end{document}